\newtheorem{theorem}{Theorem} 
\newtheorem{theoremSec}{Theorem}[section] 
\newtheorem{corollary}{Corollary} 
\newtheorem{corollarySec}{Corollary}[section] 
\newtheorem{lemma}{Lemma}[section]
\newtheorem{proposition}{Proposition}[section]
\theoremstyle{definition}
\newtheorem{definition}[lemma]{Definition}
\newtheorem{example}[lemma]{Example}
\theoremstyle{remark}
\newtheorem{remark}{Remark}[section]
\theoremstyle{definition}
\theoremstyle{definition}
\numberwithin{algorithm}{section}
\numberwithin{equation}{section}
\newcommand{\cayleyfora}{

\pgfmathsetmacro{\h}{0.866025403784}   
\pgfmathsetmacro{\H}{3.46410161514}    

\begin{scope}
  \clip (-4,-\H) rectangle (4,\H);

  \def\M{7}

  \begin{scope}[line width=0.8pt, color=black!70]

    %

    \foreach \i in {-\M,...,\M} {
      \foreach \j in {-\M,...,\M} {

        \pgfmathsetmacro{\Ux}{\i + 0.5*\j + 0.5}
        \pgfmathsetmacro{\Uy}{\h*(\j + 1/3)}

        \pgfmathsetmacro{\Vx}{\i + 0.5*\j + 1.0}
        \pgfmathsetmacro{\Vy}{\h*(\j + 2/3)}
        \draw (\Ux,\Uy) -- (\Vx,\Vy);

        \pgfmathsetmacro{\VxA}{\i + 0.5*(\j-1) + 1.0}
        \pgfmathsetmacro{\VyA}{\h*((\j-1) + 2/3)}
        \draw (\Ux,\Uy) -- (\VxA,\VyA);

        \pgfmathsetmacro{\VxB}{(\i-1) + 0.5*\j + 1.0}
        \pgfmathsetmacro{\VyB}{\h*(\j + 2/3)}
        \draw (\Ux,\Uy) -- (\VxB,\VyB);
      }
    }
  \end{scope}

  \begin{scope}[fill=black, draw=black]
    \foreach \i in {-\M,...,\M} {
      \foreach \j in {-\M,...,\M} {

        \pgfmathsetmacro{\Ux}{\i + 0.5*\j + 0.5}
        \pgfmathsetmacro{\Uy}{\h*(\j + 1/3)}
        \fill (\Ux,\Uy) circle (0.045);

        \pgfmathsetmacro{\Vx}{\i + 0.5*\j + 1.0}
        \pgfmathsetmacro{\Vy}{\h*(\j + 2/3)}
        \fill (\Vx,\Vy) circle (0.045);
      }
    }
  \end{scope}

\end{scope}
}
\newcommand{\basefora}{

\draw [color=black!90](0,-1.73205)--(0.5,-0.866025)--(-0.5,-0.866025)--(0,-1.73205);
\fill[fill=gray!80] (0,-1.73205)--(0.5,-0.866025)--(-0.5,-0.866025)--(0,-1.73205);

\draw[color=gray!60] (-4,3.464100)--(4,3.464100);
\draw[color=gray!60] (-4,2.59807)--(4,2.59807);
\draw[color=gray!60] (-4,1.73205)--(4,1.73205);
\draw[color=gray!60] (-4,0.866025)--(4,0.866025);
\draw[color=gray!60] (-4,0)--(4,0);
\draw[color=gray!60] (-4,-0.866025)--(4,-0.866025);
\draw[color=gray!60] (-4,-1.73205)--(4,-1.73205);
\draw[color=gray!60] (-4,-2.59807)--(4,-2.59807);
\draw[color=gray!60] (-4,-3.464100)--(4,-3.464100);

\draw[color=gray!60] (-4,3.464100)--(-4,-3.464100);
\draw[color=gray!60] (4,3.464100)--(4,-3.464100);

\draw[color=gray!60] (4,-1.73205)--(3,-3.464100);
\draw[color=gray!60] (4,0)--(2,-3.464100);
\draw[color=gray!60] (4,1.73205)--(1,-3.464100);
\draw[color=gray!60] (4,3.464100)--(0,-3.464100);
\draw[color=gray!60] (3,3.464100)--(-1,-3.464100);
\draw[color=gray!60] (2,3.464100)--(-2,-3.464100);
\draw[color=gray!60] (1,3.464100)--(-3,-3.464100);
\draw[color=gray!60] (0,3.464100)--(-4,-3.464100);
\draw[color=gray!60] (-1,3.464100)--(-4,-1.73205);
\draw[color=gray!60] (-2,3.464100)--(-4,0);
\draw[color=gray!60] (-3,3.464100)--(-4,1.73205);

\draw[color=gray!60] (-4,-1.73205)--(-3,-3.464100);
\draw[color=gray!60] (-4,0)--(-2,-3.464100);
\draw[color=gray!60] (-4,1.73205)--(-1,-3.464100);
\draw[color=gray!60] (-4,3.464100)--(0,-3.464100);
\draw[color=gray!60] (-3,3.464100)--(1,-3.464100);
\draw[color=gray!60] (-2,3.464100)--(2,-3.464100);
\draw[color=gray!60] (-1,3.464100)--(3,-3.464100);
\draw[color=gray!60] (0,3.464100)--(4,-3.464100);
\draw[color=gray!60] (1,3.464100)--(4,-1.73205);
\draw[color=gray!60] (2,3.464100)--(4,0);
\draw[color=gray!60] (3,3.464100)--(4,1.73205);

}
\newcommand{\baseforg}{
	
	\fill[fill=gray!80] (0,1.3)--(0.78,1.3)--(0,0);
	\draw [color=gray!60](0,6.5)--(0,-6.5);
	\draw [color=gray!60](-6.9,6.5)--(-6.9,-6.5);
	\draw [color=gray!60](-4.6,6.5)--(-4.6,-6.5);
	\draw [color=gray!60](-2.3,6.5)--(-2.3,-6.5);
	\draw [color=gray!60](2.3,6.5)--(2.3,-6.5);
	\draw [color=gray!60](4.6,6.5)--(4.6,-6.5);
	\draw [color=gray!60](6.9,6.5)--(6.9,-6.5);
	
	
	\draw [color=gray!60](-6.9,6.5)--(6.9,6.5);
	\draw [color=gray!60](-6.9,5.2)--(6.9,5.2);
	\draw [color=gray!60](-6.9,3.9)--(6.9,3.9);
	\draw [color=gray!60](-6.9,2.6)--(6.9,2.6);
	\draw [color=gray!60](-6.9,1.3)--(6.9,1.3);
	\draw [color=gray!60](-6.9,0)--(6.9,0);
	\draw [color=gray!60](-6.9,-6.5)--(6.9,-6.5);
	\draw [color=gray!60](-6.9,-5.2)--(6.9,-5.2);
	\draw [color=gray!60](-6.9,-3.9)--(6.9,-3.9);
	\draw [color=gray!60](-6.9,-2.6)--(6.9,-2.6);
	\draw [color=gray!60](-6.9,-1.3)--(6.9,-1.3);
	
	\draw [color=gray!60](-5.3,6.5)--(-6.9,3.9);
	\draw [color=gray!60](-3.8,6.5)--(-6.9,1.3);
	\draw [color=gray!60](-2.3,6.5)--(-6.9,-1.3);
	\draw [color=gray!60](-0.8,6.5)--(-6.9,-3.9);
	\draw [color=gray!60](0.8,6.5)--(-6.9,-6.5);
	\draw [color=gray!60](2.3,6.5)--(-5.3,-6.5);
	\draw [color=gray!60] (2.3,6.5) -- ($ (2.3,6.5)!0.3!(-5.3,-6.5) $);
	\draw [color=gray!60] (-5.3,-6.5) -- ($ (-5.3,-6.5)!0.6!(2.3,6.5) $);
	\draw [color=gray!60](3.8,6.5)--(-3.8,-6.5);
	\draw [color=gray!60](5.3,6.5)--(-2.3,-6.5);
	\draw [color=gray!60](6.9,6.5)--(-0.8,-6.5);
	\draw [color=gray!60](6.9,3.9)--(0.8,-6.5);
	\draw [color=gray!60](6.9,1.3)--(2.3,-6.5);
	\draw [color=gray!60](6.9,-1.3)--(3.8,-6.5);
	\draw [color=gray!60](6.9,-3.9)--(5.3,-6.5);
	
	\draw [color=gray!60](-5.3,6.5)--(2.3,-6.5);
	\draw [color=gray!60](-2.3,6.5)--(-6.9,3.9);
	
	\draw [color=gray!60](-3.8,6.5)--(3.8,-6.5);
	\draw [color=gray!60](-2.3,6.5)--(5.3,-6.5);
	
	\draw [color=gray!60](-2.3,6.5)--(6.9,1.3);
	
	\draw [color=gray!60](-0.8,6.5)--(6.9,-6.5);
	\draw [color=gray!60](0.8,6.5)--(6.9,-3.9);
	
	\draw [color=gray!60](2.3,6.5)--(-6.9,1.3);
	\draw [color=gray!60](2.3,6.5)--(6.9,-1.3);
	\draw [color=gray!60](2.3,6.5)--(6.9,3.9);
	
	\draw [color=gray!60](3.8,6.5)--(6.9,1.3);
	\draw [color=gray!60](5.3,6.5)--(6.9,3.9);
	\draw [color=gray!60](-6.9,-1.3)--(6.9,6.5);
	\draw [color=gray!60](6.9,6.5)--($ (6.9,6.5)!0.5!(-6.9,-1.3) $);
	\draw [color=gray!60](-6.9,-1.3)--($ (-6.9,-1.3)!0.33!(6.9,6.5) $);
	\draw [color=gray!60](6.9,3.9)--(-6.9,-3.9);
	\draw [color=gray!60](6.9,1.3)--(-6.9,-6.5);
	\draw [color=gray!60](6.9,-1.3)--(-2.3,-6.5);
	\draw [color=gray!60](6.9,-3.9)--(2.3,-6.5);
	
	\draw [color=gray!60](-6.9,6.5)--(0.8,-6.5);
	\draw [color=gray!60](-6.9,6.5)--(6.9,-1.3);
	\draw [color=gray!60](-6.9,6.5)--($ (-6.9,6.5)!0.5!(6.9,-1.3)$);
	\draw [color=gray!60](6.9,-1.3)--($ (6.9,-1.3)!0.33!(-6.9,6.5)$);
	\draw [color=gray!60](-6.9,3.9)--(6.9,-3.9);
	\draw [color=gray!60](-6.9,3.9)--(-0.8,-6.5);
	\draw [color=gray!60](-6.9,1.3)--(6.9,-6.5);
	\draw [color=gray!60](-6.9,1.3)--(-2.3,-6.5);
	\draw [color=gray!60](-6.9,-1.3)--(2.3,-6.5);
	\draw [color=gray!60](-6.9,-1.3)--(-3.8,-6.5);
	\draw [color=gray!60](-6.9,-3.9)--(-2.3,-6.5);
	\draw [color=gray!60](-6.9,-3.9)--(-5.3,-6.5);
}
\title{Combinatorics of Cone Types in Coxeter groups}
\renewcommand{\shorttitle}{\parbox[t]{\textwidth}{\centering Combinatorics of cone types}}
\begin{document}

\author{Yeeka Yau}
\address{School of Mathematics and Statistics, The University of Sydney}
\email{yeeka.yau@sydney.edu.au}

\begin{abstract}
In this article, we establish some new combinatorial properties of cone types in Coxeter groups. Firstly, we show that for any element $x$ in a Coxeter group $W$ and root $\beta$ in its inversion set $\Phi(x)$, the set of elements $y \in W$ satisfying $\Phi(x) \cap \Phi(y) = \{ \beta \} $ is convex in the weak order and admits a unique minimal representative. This is strongly connected to determining the cone type of elements of $W$ and leads to efficient computational methods to determine whether arbitrary elements of $W$ have the same cone type. 

\end{abstract}

\maketitle


\section{Introduction} \label{intro}

Let $(G, S)$ be a group with generating set $S$ and $x \in G$. The \textit{cone type} of $x$ is the set $\{ y \in G \mid \ell(x \cdot y) = \ell(x) + \ell(y) \}$ where $\ell(x)$ is the length of a geodesic from $e$ to $x$ in the Cayley graph of $G$. By the Myhill-Nerode theorem (\cite[Theorem 1.2.9]{epstein1992word}), a group is automatic (i.e. there exists a finite state automaton recognising at least one representative word for each element of $G$) if and only if the set of cone types of $G$ is finite. Thus the set of cone types of a group are fundamental to the automaticity and biautomaticity of the group; both of which are canonical questions in geometric group theory.

In Coxeter groups, cone types exhibit fascinating structure and have recently been extensively studied (see \cite{PARKINSON2022108146}, \cite{ALCO_2024__7_6_1879_0} for example), including the recent positive resolution of the long-standing conjecture that all Coxeter groups are biautomatic \cite{cite-key}. 

In this article, we establish some new combinatorial properties of cone types in Coxeter groups. These new properties lead to efficient computation methods to determine whether arbitrary elements of $W$ have the same cone type. Our first main result is the following fundamental property (see \Cref{thm:witnesses_are_gated}).

\begin{theorem}\label{thm2}
Let $x \in W$ and $\beta \in \Phi(x)$.  
If there exists $y \in W$ such that
\[
\Phi(x) \cap \Phi(y) = \{\beta\},
\]
then there exists a unique element $y_{\beta} \in W$ of minimal length with this property.
\end{theorem}

It turns out that \Cref{thm2} has some important consequences for understanding the structural properties of cone types in Coxeter groups. Let us first fix some terminology and notation before explaining this view.

Let $(W,S)$ be a finitely generated Coxeter system and let $\Phi$ be an associated root system. For $w \in W$, let $\ell(w)$ be the \textit{length} of $w$ and let $\Phi(w)$ be the inversion set of $w$. The \textit{right weak order} on $W$ is defined by $x \preceq y$ if and only if $\ell(y) = \ell(x) + \ell(x^{-1}y)$. Equivalently, in terms of reduced words, $x \preceq y$ if and only if there is a reduced word for $y$ which begins with a reduced word for $x$ (we also call $x$ a \textit{prefix} of $y$). The \textit{right descent set} of $w$ is $D_R(w) = \{ s \in S \mid ws \preceq w \}$ and $\Phi^R(w) = \{ -w\alpha_s \mid s \in D_R(w) \}$ is the corresponding set of \textit{right-descent roots}. 
\par
For $w \in W$ we denote $$T(w) = \{v \in W \mid \ell(wv) = \ell(w) + \ell(v) \}$$ to be the \textit{cone type} of $w$. Intuitively, the cone type of $w$ consists of all elements $v \in W$ such that the concatenation of reduced words for $w$ and $v$ is a reduced word for $w \cdot v$. The set of cone types of $W$ is $\mathbb{T} = \{ T(w) \mid w \in W\}$. A consequence of Brink and Howlett's seminal work in \cite{BH93} is that the set $\mathbb{T}$ is finite and by the Myhill-Nerode theorem, the number of cone types of $W$ is the size (number of states) of the minimal automaton recognising the language of reduced words of $(W,S)$. Fundamentally, it helps to visualise a cone type in terms of the inverses of elements in the following sense:

\begin{example} \label{example:conetype_example}

Consider the Coxeter group of type $\widetilde{G}_2$. The identity is the alcove shaded in dark grey. Let $x$ be the element as indicated below. The cone type of $x^{-1}$, $T(x^{-1})$ is the region shaded in grey below; and can be visualised as the intersection of the half-spaces (the side containing the identity) associated to the walls separating $x$ from the identity $e$.

    	\begin{figure}[H] 
		\centering
		\begin{tikzpicture} [scale=0.4]

			\draw [color=black, line width=1.4pt](0,6.5)--(0,-6.5);
			\draw [color=black, line width=1.4pt](-6.9,1.3)--(6.9,1.3);
			\draw [color=black, line width=2.3pt](-3.8,6.5)--(3.8,-6.5);
			\fill[fill=gray!30] 
			(0,1.3)--(6.9,1.3)--(6.9,-6.5)--(3.8,-6.5)--(0,0);
						
			
			\node[blue] at (-1.3,1.55) {$x$};
			
			\baseforg
		\end{tikzpicture}
		\caption{Example of a cone type in the Coxeter group of type $\widetilde{G}_2$.}
		\label{fig:conetype_example}
	\end{figure}
\end{example}

Note that the cone type in \Cref{fig:conetype_example} is bounded by three walls (or hyperplanes) and that one of the walls separating $x$ from $e$ is redundant for determining $T(x^{-1})$. The roots corresponding to those \textit{boundary} walls are the most precise set of roots which determine the cone type \cite[Theorem 2.8]{PARKINSON2022108146}. These \textit{boundary roots} of $T:=T(x^{-1})$ are characterised in the following way

$$\partial T = \{ \beta \in \Phi^+ \mid \exists \hspace{0.1cm} y \in W \mbox{ with } \Phi(x) \cap \Phi(y) = \{ \beta \} \}$$ 

Let us introduce a few more definitions to illuminate the structure of cone types and illustrate the significance of \Cref{thm2}.

\begin{definition} \label{defn:witnesses_of_beta_wrt_Tx_inverse}
Let $x \in W$ and $\beta \in \Phi^+$. We say an element $y \in W$ is a \textit{witness of $\beta$ with respect to $x$} if 
$$\Phi(x) \cap \Phi(y) = \{ \beta \}$$ 
We denote $\partial T(x^{-1})_{\beta}$ to be the set of \textit{witnesses of $\beta$ with respect to $x$}. More precisely,
\begin{equation*}
    \partial T(x^{-1})_{\beta} = \{ y \in W \mid \Phi(x) \cap \Phi(y) = \{ \beta \} \}
\end{equation*}    
\end{definition}

For a cone type $T$, the \textit{cone type part} $Q(T)$ are the elements $x \in W$ with $T(x^{-1}) = T$.

\begin{remark}
	For each cone type $T$ and $x, y \in Q(T)$ it is not generally true that $ \partial T(x^{-1})_{\beta} =  \partial T(y^{-1})_{\beta}$. Continuing the example from \Cref{fig:conetype_example} consider the following
\end{remark}

	\begin{figure}[H] 
		\centering
		\begin{tikzpicture} [scale=0.44]

			\draw [color=black, line width=1.4pt](0,6.5)--(0,-6.5);
			\draw [color=black, line width=1.4pt](-6.9,1.3)--(6.9,1.3);
			\draw [color=blue, line width=2.3pt](-3.8,6.5)--(3.8,-6.5);
			\fill[fill=gray!30] 
			(0,1.3)--(6.9,1.3)--(6.9,-6.5)--(3.8,-6.5)--(0,0);
			
			\draw [color=blue!30, line width=3.2pt](0.13,-0.2)--(0.13,-6.5);
			\draw [color=blue!30, line width=4.2pt](0.05,-0.2)--(3.7,-6.5);
			
			\draw [color=red!30, line width=2.2pt](0.2,-2.7)--(2.4,-6.5);
			\draw [color=red!30, line width=2.2pt](0.2,-0.6)--(3.6,-6.5);
			\draw [color=red!30, line width=2.2pt](0.2,-0.6)--(0.2,-2.7);
						
			\draw[dashed,line width=1.6pt] [color=black](-6.9,6.5)--($ (-6.9,6.5)!0.34!(6.9,-1.3)$);
			\draw[dashed,line width=1.6pt] [color=black](-2.3, 3.9)--($(-2.3, 3.9)!0.635!(0,0)$);
			\draw[dashed,line width=1.6pt] [color=black](-6.9, 3.9)--($(-6.9, 3.9)!0.97!(-2.3,1.3)$);
			\draw[dashed,line width=1.6pt] [color=black](-2.3,1.3)--($(-2.3,1.3)!1.2!(-1,1.3)$);
			
			\node[blue] at (-1.3,1.55) {$x$};
            \node[red] at (-2.97,2.05) {$y$};
			
			\baseforg
		\end{tikzpicture}
		\caption{Let $x$ and $y$ be the elements illustrated above. One can see that $T(x^{-1}) = T(y^{-1})$, so $x$ and $y$ are in the same cone type part $Q(T)$; which is the region bounded by the dashed lines, and the horizontal black and blue hyperplanes (the region looks like a downward right pointing arrow). The cone type $T:= T(x^{-1}) = T(y^{-1})$ is represented by the gray shaded region. Let $\beta \in \partial T$ correspond to the blue hyperplane. The region bounded by the red highlighted lines is $\partial T(y^{-1})_{\beta}$ and the region bounded by the blue highlighted lines is $ \partial T(x^{-1})_{\beta}$.}
		\label{fig:witnesses_are_different}
	\end{figure}

Since for elements $x, y$ in the same cone type part $Q(T)$ we do not necessarily have $ \partial T(x^{-1})_{\beta} = \partial T(y^{-1})_{\beta}$ it is natural to make the following definition at the level of the cone type.

\begin{definition}
	Let $T \in \mathbb{T}$ be a cone type and $\beta \in \partial T$. The \textit{witnesses of $\beta$ with respect to $T$} is 
	\begin{align*}
		\partial T_{\beta} = &\ \{ y \in W \mid \Phi(x) \cap \Phi(y) = \{ \beta \} \mbox{ for all } x \in Q(T) \} \\
        = &\ \bigcap_{x \in Q(T)} \partial T(x^{-1})_{\beta}
	\end{align*}  
\end{definition}

In \cite{PARKINSON2022108146} we showed that for each boundary root $\beta \in \partial T$ and $x \in Q(T)$ there is \textit{a} witness $y \in W$ such that $\Phi^R(y) = \{ \beta \}$ (see \Cref{minimal_witness}). However, it was not known whether such an element $y$ is unique, let alone that it is the minimal length representative of $\partial T_{\beta}$.

As a consequence of \Cref{thm2} we obtain the following.

\begin{corollary} \label{cor:witnesses_of_boundary_roots}
	For each cone type $T$ and $\beta \in \partial T$ there is a unique minimal length element $y_{\beta} \in W$ such that $$\Phi(x) \cap \Phi(y_{\beta}) = \{ \beta \}$$ 
	for all $x \in Q(T)$. Moreover, $\Phi^R(y) = \{ \beta \}$ and $y_{\beta} \preceq y$ for all $y \in \partial T(x^{-1})_{\beta}$ for each $x \in Q(T)$. Thus $y_{\beta} \preceq y$ for all $y \in \partial T_{\beta}$.
\end{corollary}

We give a special name for the property enjoyed by the element $y_{\beta}$ in relation to the sets $\partial T(x^{-1})_{\beta}$ and $\partial T_{\beta}$. Following the terminology introduced in \cite{PARKINSON2022108146}, for a subset $X \subseteq W$, we say $X$ is \textit{gated} if there is a unique minimal length element $x \in X$ such that $x \preceq y$ for all $y \in X$. We say the subset $X$ is \textit{convex} if for all  $x,y \in X$ and all reduced expressions $x^{-1}y = s_1 \cdots s_n$, the elements $xs_1 \cdots s_j$ with $0 \le j \le n$ is in $X$. In terms of the Cayley graph $X^1$ of $(W,S)$ this means that all elements on geodesics between $x$ and $y$ are in $X$.

\Cref{thm2} and \Cref{cor:witnesses_of_boundary_roots} reveals that each cone type $T$ is associated with a number of convex, gated subsets of $W$, including its cone type part $Q(T)$ (see \Cref{thm:gates_of_conetype_partition}), the sets $\partial T(x^{-1})_{\beta}$ for $x \in Q(T)$ and $\beta \in \partial T$ (as well as the cone type $T$ itself with the identity as the gate). 

We illustrate in \Cref{fig:witness_example_g2} an example in the case of the Coxeter group of type $\widetilde{G}_2$. The figure shows a cone type $T$, its cone type part $Q(T)$ and the sets $\partial T(x^{-1})_{\beta}$ for each $\beta \in \partial T$ for the minimal element $x$ of $Q(T)$.

\begin{figure}[H] 
    \centering
        \begin{tikzpicture} [scale=0.5]
            \fill[fill=gray!30] 
            (-6.9,6.5)--(0,2.6)--(0.8,1.3)--(0,0)--(-6.9,-3.9);
            \fill[fill=blue!20] 
            (-6.9,6.5)--(0,2.6)--(-2.3,6.5);
            \fill[fill=red!20] 
            (0,0)--(-6.9,-3.9)--(-6.9,-6.5)--(-3.8,-6.5);
            \fill[fill=green!20] 
            (0.8,1.3)--(0,0)--(1.1,0.6);
            \fill[fill=cyan!20] 
            (0.8,1.3)--(0,2.6)--(1.13,2);
            \fill[fill=blue] (-0.65,3.2) circle (4pt);
            \fill[fill=cyan] (0.7,1.85) circle (4pt);
            \definecolor{darkgreen}{rgb}{0.0, 0.5, 0.0}
            \fill[fill=darkgreen] (0.7,0.7) circle (4pt);
            \fill[fill=red] (-0.65,-0.65) circle (4pt);
            \draw[line width=1.5pt] [color=black](-2.3,6.5)--(5.3,-6.5);
            \draw[line width=1.5pt] [color=black](-6.9,6.5)--(6.9,-1.3);
            \draw[line width=1.5pt] [color=black](6.9,3.9)--(-6.9,-3.9);
            \draw[line width=1.5pt] [color=black](3.8,6.5)--(-3.8,-6.5);
            \draw[dashed,line width=1.2pt] [color=black](2.5,1.3)--(6.9,1.3);
            \draw[dashed,line width=1.2pt] [color=black](4.5,0)--(6.9,0);
            \fill[fill=yellow!20] 
            (2.5,1.3)--(6.9,1.3)--(6.9,0)--(4.5,0);
            \node[black] at (3.3,1) {$x$};
            \baseforg
        \end{tikzpicture}
    \caption{Let the element $x$ be indicated as above. The grey shaded region is the cone type $T:=T(x^{-1})$ (the darker grey alcove represents the identity) with boundary roots $\partial T$ indicated by the thickened black walls. The yellow shaded region is the cone type part $Q(T)$. The remaining coloured shaded regions are the sets $\partial T(x^{-1})_{\beta}$ for each $\beta \in \partial T$ and the corresponding coloured dots are the gates of those regions.}
    \label{fig:witness_example_g2}
\end{figure}

Let us note a few more properties of the unique minimal length witnesses of boundary roots. One may notice that by symmetry, $y \in \partial T(x^{-1})_{\beta}$ if and only if $x \in \partial T(y^{-1})_{\beta}$. Furthermore, the gates of the sets $Q(T)$ for $T \in \mathbb{T}$ (we denote these elements by $\Gamma$) are precisely the elements $y \in W$ such that $\Phi^R(y) \subseteq \partial T(y^{-1})$ (see \Cref{thm:gates_characterised_by_phi0}). It thus follows by \Cref{cor:witnesses_of_boundary_roots} that the unique minimal witnesses of boundary roots are gates of cone type parts. In particular, since these minimal witnesses have a single right-descent, they are what we call \textit{tight gates} denoted $\Gamma^0$. More precisely

\begin{equation*}
    \Gamma^0 = \{x \in \Gamma \mid |\Phi^R(x)| = 1 \}
\end{equation*}

\begin{theorem} \label{thm:tight_gates_are_join_irreducible}
	Let $\Gamma$ be the gates of the cone type parts $\{ Q(T) \mid T \in \mathbb{T} \}$. 
    Then
    $$\Gamma^0 = \{ g \in \Gamma \mid |\Phi^R(g)| = 1\}$$ is the set of  join-irreducible elements of the partially ordered set $(\Gamma, \preceq)$.
\end{theorem}

A consequence of \Cref{thm:tight_gates_are_join_irreducible} is that the set of tight gates $\Gamma^0$ completely determines the set of cone types $\mathbb{T}$ of $W$ (see \Cref{cor:every_conetype_is_expressible_as_an_intersection_of_tightgates}). In addition, we obtain the following (see \Cref{suffix_closure_of_tight_gates}).

\begin{corollary} \label{cor1}
The set
    $$\Gamma^0 = \{ g \in G \mid |\Phi^R(g)| = 1\}$$
    is closed under suffix.
\end{corollary}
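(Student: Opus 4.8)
The plan is to reduce the statement to an elementary fact about right descents, since the only feature of $G$ we actually need is that a Garside shadow is, by definition, closed under suffix. So it suffices to prove the following claim in an arbitrary Coxeter system $(W,S)$: if $g \in W$ satisfies $|\Phi^R(g)| = 1$ and $x$ is a suffix of $g$ which is not the identity, then $|\Phi^R(x)| = 1$. Granting this, for $g \in G^0$ and any suffix $x$ of $g$ we have $x \in G$ by suffix-closure of $G$, and $|\Phi^R(x)| = 1$ by the claim (the identity being harmless, as it carries no right-descent root and hence lies in $G^0$ in no case), so $x \in G^0$.

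For the claim, I would first record that $D_R(x) \subseteq D_R(g)$ whenever $x$ is a suffix of $g$. Writing $g = g'x$ with $\ell(g) = \ell(g') + \ell(x)$ (take $g' = gx^{-1}$), for $s \in D_R(x)$ we have $\ell(xs) = \ell(x) - 1$, so
$$\ell(gs) = \ell(g'\,xs) \le \ell(g') + \ell(xs) = \ell(g) - 1;$$
since $\ell(gs) \ge \ell(g) - 1$ always holds, $\ell(gs) = \ell(g) - 1$, i.e.\ $s \in D_R(g)$. Next recall that $s \mapsto -w\alpha_s$ is a bijection $D_R(w) \to \Phi^R(w)$ for every $w$ (it is injective because distinct simple reflections have distinct simple roots), so $|\Phi^R(w)| = |D_R(w)|$. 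Now $|\Phi^R(g)| = 1$ gives $|D_R(g)| = 1$, whence $|D_R(x)| \le 1$ by the inclusion; and $x \neq 1$ forces $D_R(x) \neq \emptyset$, so $|D_R(x)| = 1$ and therefore $|\Phi^R(x)| = 1$, as claimed.

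I expect no serious obstacle here: the argument is a single length inequality together with the standard bijection between right descents and right-descent roots, and it applies uniformly to every subset of $W$ that is closed under suffix — in particular to $\Gamma$, to the $m$-Shi shadows $L_m$, and to the minimal Garside shadow $\widetilde S$. One could alternatively deduce the statement for $G = \Gamma$ or $G = L_m$ from the join-irreducibility descriptions in \Cref{thm2} and \Cref{thm1.9} together with suffix-closure of $\Gamma$ and $L_m$, but the direct argument above is both shorter and fully general. The only point worth a remark is the degenerate suffix equal to the identity element, which is simply never a member of $G^0$ and so poses no problem.
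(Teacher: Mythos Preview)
Your proof is correct and takes essentially the same approach as the paper: the paper first isolates the key fact that $D_R(sx) = D_R(x)$ for $s \in D_L(x)$ (Lemma~\ref{tight_elements_are_closed_under_suffix}) and then applies suffix-closure of $G$, while you prove the equivalent inclusion $D_R(x) \subseteq D_R(g)$ for an arbitrary suffix $x$ of $g$ directly via the same length inequality. The arguments are the same in substance, differing only in whether one removes one left descent at a time or passes to a general suffix in one step.
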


\Cref{cor1} then leads to a very efficient algorithm to compute the set $\Gamma^0$ without having to compute the entire set $\Gamma$ (effectively all the cone types) first. A consequence of this result is that to determine whether two elements $x, y$ have the same cone type, one is only required to compute $\Gamma^0$, a set strictly smaller than $\Gamma$ (with only a few exceptions). Our computations using Sagemath (\cite{sagemath}) show that this set is in many cases much smaller than $\Gamma$ as the rank of $W$ grows (see \Cref{algo:compute_tight_gates_and_super_elementary_roots} and \Cref{table:data_for_tight_gates}).

\section{Preliminaries} \label{section:Preliminaries}

To begin, we review some essential properties of Coxeter systems, the standard geometric representation, the Coxeter complex, Cayley graph and cone types. For general Coxeter group theory the primary references are \cite{Hu90}, \cite{BB05} and \cite{Ronan_2009}. For more on cone types see \cite{PARKINSON2022108146}.

\subsection{Coxeter Groups} \label{Coxeter systems and root systems}

Let $(W,S)$ be a Coxeter system with $|S| < \infty$. 
The \textit{length} of $w\in W$ is 
$$
\ell(w)=\min\{n\geq 0\mid w=s_1 \ldots s_n\text{ with }s_1,\ldots,s_n\in S\},
$$
and an expression $w=s_1 \ldots s_n$ with $n=\ell(w)$ is called a \textit{reduced expression} or \textit{reduced word} for~$w$. The \textit{left descent set} of $w$ is $D_L(w) = \{ s \in S \mid \ell(sw) = \ell(w) - 1 \}$, similarly the \textit{right descent set} is $D_R(w) = \{ s \in S \mid \ell(ws) = \ell(w) - 1 \}$. It is well known that the set
\begin{equation*}
    R = \{ wsw^{-1} \mid w \in W, s\in S \}
\end{equation*}
is the set of \textit{reflections} of $W$ (see \cite[Chapter 1]{BB05}).

\subsubsection{Standard Geometric Representation and root system of $W$} \label{root_system}

Let $V$ be an $\mathbb{R}$-vector space with basis $\{\alpha_s\mid s\in S\}$. Define a symmetric bilinear form on $V$ by linearly extending $\langle\alpha_s,\alpha_t\rangle=-\cos(\pi/m(s,t))$. The Coxeter group $W$ acts on $V$ by the rule $sv =v-2\langle v,\alpha_s\rangle \alpha_s$ for $s\in S$ and $v\in V$, and the root system of $W$ is $\Phi=\{w\alpha_s \mid w\in W,\,s\in S\}$. The elements of $\Phi$ are called \textit{roots}, and the \textit{simple roots} are $\Delta = \{ \alpha_s \mid s \in S \}$.
\par
Each root $\alpha\in\Phi$ can be written as $\alpha=\sum_{s\in S}c_s\alpha_s$ with either $c_s\geq 0$ for all $s\in S$, or $c_s\leq 0$ for all $s\in S$. In the first case $\alpha$ is called \textit{positive} (written $\alpha>0$), and in the second case $\alpha$ is called \textit{negative} (written $\alpha<0$). Denote $\Phi^+$ (resp. $\Phi^-$) to be the positive roots (resp. negative roots) of $\Phi$. By definition of $\Phi$, each root $\beta$ can be written $\beta = w\alpha_s$ for some $s \in S$. The reflection $s_{\beta} = wsw^{-1}$ gives rise to the hyperplane in $V$
\begin{equation*}
    H_{\beta} = \{ \lambda \in V \mid s_{\beta}\lambda = \lambda\}
\end{equation*}
and this hyperplane separates $W$ into two sets (called \textit{half spaces})
\begin{equation*}
    H_{\beta}^{-} = \{w \in W \mid \ell(s_{\beta}w) < \ell(w) \} \mbox{ and } H_{\beta}^{+} = \{w \in W \mid \ell(s_{\beta}w) > \ell(w) \}
\end{equation*}
Half spaces provide an important characterisation of  convexity in $W$.

\begin{lemma} \cite[Proposition 3.94]{buildings} \label{lem:convexity}
    A subset $A \subseteq W$ is convex if and only if it is an intersection of half spaces.
\end{lemma}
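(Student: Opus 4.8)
The plan is to prove the two implications separately after translating everything into the language of walls and galleries. Under the conventions of the paper, the half-space $H_\beta^-$ is precisely $\{w \in W : \beta \in \Phi(w)\}$, that is, the set of elements lying on one fixed side of the wall of the reflection $s_\beta$, while a geodesic in the (right) Cayley graph from $x$ to $y$ is the same datum as a reduced word for $x^{-1}y$. The single combinatorial engine I would use throughout is the \emph{separation set} $\mathrm{Sep}(x,y)$, the set of reflections whose wall places $x$ and $y$ in opposite half-spaces. I would record three standard consequences of the strong exchange condition: $|\mathrm{Sep}(x,y)| = \ell(x^{-1}y) = d(x,y)$; a geodesic from $x$ to $y$ crosses each wall of $\mathrm{Sep}(x,y)$ exactly once and crosses no other wall; and for adjacent $x,\,x'=xs$ one has the cocycle identity $\mathrm{Sep}(x',y) = \mathrm{Sep}(x,y)\,\triangle\,\{t\}$, where $t$ is the unique reflection whose wall lies between $x$ and $x'$.

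For the direction $(\Leftarrow)$ I would first show that every half-space is convex. If $x,y \in H_\beta^{\epsilon}$ then $s_\beta \notin \mathrm{Sep}(x,y)$, so no geodesic from $x$ to $y$ crosses the wall of $\beta$; hence every vertex of every such geodesic remains on the same side of that wall, i.e. in $H_\beta^\epsilon$. Thus each half-space is convex, and since an arbitrary intersection of convex sets is convex, every intersection of half-spaces is convex.

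For $(\Rightarrow)$, let $A$ be convex and set $\overline A = \bigcap\{H : H \text{ a half-space with } A \subseteq H\}$; plainly $A \subseteq \overline A$, and it suffices to prove $\overline A \subseteq A$. Suppose not and fix $w \in \overline A \setminus A$. Choose $x \in A$ with $d(x,w)$ minimal and write $x^{-1}w = s_1\cdots s_d$ reduced, $d = d(x,w) \ge 1$. Let $t = xs_1x^{-1}$ be the first wall crossed on the geodesic $x \to w$ and $H$ the half-space of this wall containing $x$, so $w \notin H$. Since $w \in \overline A$ but $w \notin H$, the half-space $H$ cannot contain $A$, for otherwise $H$ would be one of the half-spaces defining $\overline A$ and would force $w \in H$. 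Hence there is some $E \in A$ with $E \notin H$, i.e. $E$ lies on the same side of $t$ as $w$. Now the cocycle identity does the work: as $t$ separates $x$ and $E$ we get $\mathrm{Sep}(xs_1,E) = \mathrm{Sep}(x,E)\setminus\{t\}$, whence $d(xs_1,E) = d(x,E)-1$. Therefore $s_1$ is a left descent of $x^{-1}E$, some reduced word for $x^{-1}E$ begins with $s_1$, and $x' := xs_1$ lies on a geodesic from $x$ to $E$. Convexity of $A$ then gives $x' \in A$, yet $d(x',w) = d-1 < d$, contradicting minimality of $x$. Thus $\overline A = A$. The degenerate cases are handled by the usual conventions, writing $\emptyset = H_\beta^+ \cap H_\beta^-$ and $W$ as the empty intersection.

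I expect the main obstacle to be conceptual rather than computational. The natural first attempt at $(\Rightarrow)$ is to show that the nearest point $x$ enjoys a \emph{gate property}, namely $d(w,a) = d(w,x) + d(x,a)$ for all $a \in A$, and to extract the separating half-space from that; but this does \emph{not} follow from minimality and convexity alone, since a geodesic lying inside $A$ may wander away from $w$ before returning, so a local comparison of consecutive distances yields no contradiction. The decisive idea that sidesteps this is to abandon the gate property and use only a single far-side witness $E$: the very first step $xs_1$ of the geodesic toward $w$ already decreases the distance to $E$, so convexity drags $xs_1$ back into $A$ and immediately breaks minimality. The only step requiring genuine care is verifying the one-step drop $d(xs_1,E) = d(x,E)-1$, which is exactly the symmetric-difference identity for $\mathrm{Sep}$.
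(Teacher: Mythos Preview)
The paper does not give its own proof of this lemma; it is quoted as \cite[Proposition~3.94]{buildings} and used as a black box. So there is nothing in the paper to compare your argument against.

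That said, your proof is correct and is essentially the standard argument one finds in the buildings literature. The $(\Leftarrow)$ direction is routine. For $(\Rightarrow)$, the key move---pick $x\in A$ nearest to $w$, look at the first wall $t$ on a geodesic from $x$ to $w$, use $w\in\overline A$ to find a far-side witness $E\in A$, and then observe that the first step $xs_1$ already lies on a geodesic from $x$ to $E$---is exactly the classical ``folding'' argument. Your own commentary about the failed gate-property approach is apt: minimality alone does not give gatedness, and the witness $E$ is precisely what rescues the argument. The only place to be slightly more explicit is the verification that $d(x',w)=d-1$, but this is immediate since $s_2\cdots s_d$ is a suffix of a reduced word and hence reduced.
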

For $A \subset \Phi^+$, $cone(A)$ is the set of non-negative linear combinations of roots in $A$ and $cone_{\Phi}(A) = cone(A) \cap \Phi^+$. The \textit{(left) inversion set} of $w\in W$ is 
$$
\Phi(w)=\{\alpha\in \Phi^+\mid w^{-1}(\alpha)<0\}.
$$

Geometrically, the inversion set of $w$ corresponds to the roots $\beta$ for which $w$ is contained in the half space $H_{\beta}^-$ (or equivalently, separated from the identity by the hyperplane $H_{\beta}$). A well known fact is that $x \preceq y$ if and only if $\Phi(x) \subseteq \Phi(y)$ (\cite[Proposition 2.8(3)]{HD16}). An important subset of $\Phi(w)$ is the set of \textit{short inversions} $\Phi^1(w)$ (also called the \textit{base} of $\Phi(w)$).

\begin{definition}\cite[Proposition 4.6]{HD16} 
Let $w \in W$. The \textit{short inversions} of $\Phi(w)$ is the set
\begin{equation*}
    \Phi^1(w) = \{ \beta \in \Phi(w) \mid \ell(s_{\beta}w) = \ell(w) - 1 \}
\end{equation*}
\end{definition}

The set $\Phi^1(w)$ determines the inversion set $\Phi(w)$ in the following way (see \cite[Lemma 1.7]{dyer_2019} and \cite[Corollary 2.13]{HOHLWEG20161}).

\begin{proposition} \label{inversion_set_is_determined_by_phi1}
For $w \in W$ we have $\Phi(w) = cone_{\Phi}(\Phi^1(w))$ and if $A \subset \Phi^+$ is such that $\Phi(w) = cone_{\Phi}(A)$ then $\Phi^1(w) \subseteq A$.
\end{proposition}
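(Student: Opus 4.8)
The statement combines two claims: the cone identity $\Phi(w) = cone_{\Phi}(\Phi^1(w))$, and the minimality of $\Phi^1(w)$ among all $A \subseteq \Phi^+$ with $cone_{\Phi}(A) = \Phi(w)$. It is a known result (see \cite{dyer_2019} and \cite{HOHLWEG20161}), and my plan is to reduce both halves to the single structural fact that $\Phi^1(w)$ is precisely the set of extreme rays of the convex cone $cone(\Phi(w))$. Throughout I write $\mathrm{dp}(\beta) = \min\{\ell(v) \mid \beta \in \Phi(v)\}$ for the depth of a positive root, so that the simple roots are exactly the roots of depth $1$; I also use that, since the bilinear form is $W$-invariant, every root has norm $1$, so distinct positive roots are never positive scalar multiples of one another.

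One inclusion of the cone identity is immediate: inversion sets are closed under positive combinations, because if $\beta, \gamma \in \Phi(w)$ and $a\beta + b\gamma \in \Phi^+$ with $a, b \ge 0$, then $w^{-1}(a\beta+b\gamma) = a\,w^{-1}\beta + b\,w^{-1}\gamma$ is a nonnegative combination of the negative roots $w^{-1}\beta, w^{-1}\gamma$, hence has all simple coordinates $\le 0$ and, being a root, is negative; so $a\beta+b\gamma \in \Phi(w)$. Since $\Phi^1(w) \subseteq \Phi(w)$ by definition, iterating gives $cone_{\Phi}(\Phi^1(w)) \subseteq \Phi(w)$. For the reverse inclusion I would induct on $\mathrm{dp}(\beta)$ for $\beta \in \Phi(w)$: if $\mathrm{dp}(\beta) = 1$ then $\beta = \alpha_s$ with $\ell(sw) = \ell(w) - 1$, so $\beta \in \Phi^1(w)$; if $\mathrm{dp}(\beta) \ge 2$ and $\beta \notin \Phi^1(w)$, the key lemma produces two inversions $\gamma, \delta \in \Phi(w)$ of strictly smaller depth with $\beta \in \mathbb{R}_{>0}\gamma + \mathbb{R}_{>0}\delta$, and the induction hypothesis together with the first inclusion finishes it. That lemma I would prove by passing to the rank-two reflection subgroup generated by $s_{\beta}$ and a suitably chosen simple reflection $t$ with $\langle \beta, \alpha_t\rangle > 0$: inside the (finite or affine) rank-two root subsystem, $\beta$ is a non-extreme positive root, hence a positive combination of its two neighbouring roots, and one checks those neighbours still lie in $\Phi(w)$ and have smaller depth. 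A by-product is that $\Phi^1(w)$ is finite and is exactly the extreme-ray set of $cone(\Phi(w))$, i.e. no short inversion lies in $cone\big(\Phi(w) \setminus \mathbb{R}_{\ge 0}\beta\big)$.

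For minimality, let $A \subseteq \Phi^+$ with $cone_{\Phi}(A) = \Phi(w)$; then $A \subseteq cone(A) \cap \Phi^+ = \Phi(w)$. Fix $\beta \in \Phi^1(w)$. By the cone identity, $\beta \in cone_{\Phi}(A)$, so $\beta = \sum_{i=1}^k c_i \alpha_i$ with $c_i > 0$ and the $\alpha_i \in A$ distinct positive roots. If $k = 1$ then $\beta = c_1 \alpha_1$ with $\beta, \alpha_1$ of norm $1$, forcing $\beta = \alpha_1 \in A$. If $k \ge 2$, a short rearrangement (using that no $\alpha_i$ is a positive multiple of $\beta$ unless equal to it, and that each $\alpha_i$ is a positive vector) rewrites $\beta$ as a positive combination of at least two roots of $\Phi(w)$, none equal to $\beta$, contradicting that $\beta$ is an extreme ray of $cone(\Phi(w))$. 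Hence $\Phi^1(w) \subseteq A$, as required. The main obstacle is the rank-two reduction lemma of the middle paragraph, equivalently the identification of $\Phi^1(w)$ with the extreme rays of the inversion cone, since that is where one must verify that the auxiliary simple reflection and the two lower-depth roots can always be chosen inside $\Phi(w)$; I would carry this out following \cite{dyer_2019}, checking the rank-two case directly and lifting by depth reduction.
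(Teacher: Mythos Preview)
The paper does not supply its own proof of this proposition: it is quoted as a known result with references to \cite[Lemma 1.7]{dyer_2019} and \cite[Corollary 2.13]{HOHLWEG20161}, and no proof environment follows. So there is nothing in the paper to compare your argument against beyond the citations, which you already invoke.

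Your sketch is along the correct lines and matches the standard approach in those references: identify $\Phi^1(w)$ with the extreme rays of the finite polyhedral cone $cone(\Phi(w))$, then both assertions drop out. One small comment: the induction on depth in your middle paragraph is more delicate than necessary. Because $\Phi(w)$ is a finite set (of cardinality $\ell(w)$), the cone $cone(\Phi(w))$ is automatically polyhedral, and every element of $\Phi(w)$ lies in the cone over the extreme-ray roots; so once you have established that the extreme-ray roots are exactly $\Phi^1(w)$, the identity $\Phi(w)=cone_\Phi(\Phi^1(w))$ follows without needing the two auxiliary roots $\gamma,\delta$ to have strictly smaller depth. The genuine work, as you say, is the rank-two lemma showing that $\beta\in\Phi(w)$ with $\ell(s_\beta w)<\ell(w)-1$ forces $\beta$ to be a strictly positive combination of two other inversions of $w$; this is where the cited references do the heavy lifting. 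Your minimality paragraph is fine, though in the $k\ge 2$ case you should note that if some $\alpha_i$ happens to equal $\beta$ one first subtracts it off (the remaining coefficient of $\beta$ is then forced to be strictly less than $1$ by positivity of the other terms), after which the extreme-ray contradiction goes through.
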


As noted earlier, another important subset of $\Phi(w)$ is the following.

\begin{definition} \label{defn:final_roots}
    Let $w \in W$. The set of \textit{right-descent roots} of $w$ (also called \textit{final roots}) is the set
    \begin{equation*}
        \Phi^R(w) = \{ -w\alpha_s \mid s \in D_R(w) \}
    \end{equation*}
\end{definition}

A root $\beta \in \Phi^R(w)$ if and only if $s_{\beta}w = ws$ for some $s \in D_R(w)$. In terms of the Coxeter complex $\Sigma(W,S)$ or Cayley graph $X^1$ the right descent roots correspond to the set of \textit{final} or \textit{last} walls crossed for geodesics from the identity $e$ to $w$ (see \Cref{the_coxeter_complex} and \Cref{the_cayley_graph}). The \textit{left-descent roots} are $\Phi^L(w) = \{ \alpha_s \mid s \in D_L(w) \}$ and correspond to \textit{initial} walls crossed for geodesics from the identity $e$ to $w$.
\par
We collect some further useful facts regarding joins in the right weak order which are used in this article.

\begin{proposition}\cite[Proposition 2.8]{HD16} \label{inversion_set_of_join}
If $X \subset W$ is bounded (in the right weak order), then
\begin{equation*}
    \Phi(\bigvee X) = cone_{\Phi}(\bigcup_{x \in X} \Phi(x))
\end{equation*}
\end{proposition}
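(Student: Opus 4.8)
The plan is to prove the two inclusions of $\Phi(\bigvee X) = cone_{\Phi}\big(\bigcup_{x \in X}\Phi(x)\big)$ separately; the content lies entirely in $\Phi(\bigvee X) \subseteq cone_{\Phi}\big(\bigcup_{x \in X}\Phi(x)\big)$, the reverse being easy. Two preliminary remarks: since $\abs{S} < \infty$ every length class of $W$ is finite, so a bounded set $X$ is finite and $j := \bigvee X$ exists (the right weak order is a complete meet-semilattice, and for a bounded family the join equals the meet of the set of upper bounds). I will use freely that $u \preceq v$ if and only if $\Phi(u) \subseteq \Phi(v)$, and that every inversion set is \emph{closed}, i.e.\ $cone_{\Phi}(\Phi(w)) = \Phi(w)$ — this is part of \Cref{inversion_set_is_determined_by_phi1} (alternatively: a non-negative combination of negative roots that is again a root must be negative).

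For the easy inclusion $\Phi(\bigvee X) \supseteq cone_{\Phi}\big(\bigcup_{x \in X}\Phi(x)\big)$: each $x \in X$ satisfies $x \preceq j$, hence $\Phi(x) \subseteq \Phi(j)$; taking the union and applying $cone_{\Phi}$, and using that $\Phi(j)$ is closed, gives $cone_{\Phi}\big(\bigcup_{x \in X}\Phi(x)\big) \subseteq cone_{\Phi}(\Phi(j)) = \Phi(j)$.

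For the main inclusion, set $C := cone_{\Phi}\big(\bigcup_{x \in X}\Phi(x)\big)$ — a finite subset of $\Phi(j)$ that is closed by construction. The key step is to show that $C$ is in fact \emph{biclosed} (i.e.\ that $\Phi^{+}\setminus C$ is also closed). Granting this, a finite biclosed subset of $\Phi^{+}$ is the inversion set of an element of $W$ (a classical fact, valid for arbitrary Coxeter systems), so $C = \Phi(v)$ for some $v \in W$. Since $\Phi(x) \subseteq C = \Phi(v)$ for every $x \in X$, each $x$ satisfies $x \preceq v$, so $v$ is an upper bound of $X$; hence $j = \bigvee X \preceq v$, so $\Phi(j) \subseteq \Phi(v) = C$, and combined with the easy inclusion this gives $\Phi(j) = C$.

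The main obstacle is thus the biclosedness of $C$, and it is the only place where boundedness of $X$ is genuinely used (elsewhere it merely guarantees that $j$ exists). This cannot be reduced to the false statement ``$cone_{\Phi}(A)$ is biclosed whenever $A \subseteq \Phi(w)$'' — e.g.\ $cone_{\Phi}(\{\alpha_s + \alpha_t\})$ fails to be biclosed — so the argument must use that $\bigcup_{x \in X}\Phi(x)$ is a union of inversion sets of a bounded family. I would attack it by first reducing to $\abs{X} = 2$, writing $\bigvee X$ as an iterated join of pairs (each again bounded, since any upper bound of $X$ bounds every partial join), and then, for a bounded pair $\{x,y\}$, inducting on $\ell(x) + \ell(y)$: if $x$ and $y$ share a left (or right) descent $s$ one strips it off, using that $s$ permutes $\Phi^{+}\setminus\{\alpha_s\}$ to transport $cone_{\Phi}(\Phi(x)\cup\Phi(y))$ to a lower-length instance and back while preserving biclosedness. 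The configurations in which $x$ and $y$ share no common descent are the delicate ones; handling them uniformly is, I expect, the crux of the argument, and this is where the finer combinatorics of biclosed sets in root systems really enters.
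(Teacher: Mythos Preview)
The paper does not prove this proposition at all: it is quoted verbatim from \cite[Proposition 2.8]{HD16} in the preliminaries section, with no argument given. So there is no ``paper's own proof'' to compare against; you are attempting to supply what the paper simply imports.

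Your overall architecture is the standard one and is correct as far as it goes: the easy inclusion is fine, and reducing the hard inclusion to the biclosedness of $C = cone_{\Phi}\big(\bigcup_{x\in X}\Phi(x)\big)$ is exactly the right move, since finite biclosed subsets of $\Phi^+$ are precisely inversion sets.

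The gap is that you never actually establish biclosedness. Your proposed induction on $\ell(x)+\ell(y)$ (after reducing to pairs) handles the common-left-descent case cleanly---strip off $s$, use that $s$ permutes $\Phi^+\setminus\{\alpha_s\}$, and the inductive hypothesis transports back---but, as you explicitly concede, the case $D_L(x)\cap D_L(y)=\emptyset$ is left open. This is not a technicality you can wave away: the induction genuinely stalls there, since removing a descent from $x$ alone does not control $\Phi(y)$, and replacing $y$ by $sy$ increases $\ell(x)+\ell(y)$. You have identified the crux but not resolved it.

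One route to close the gap avoids the descent-stripping induction entirely. Work directly with $j=\bigvee X$ and observe that $\Phi^R(j)\subseteq\bigcup_{x\in X}\Phi(x)$: if some $\beta\in\Phi^R(j)$ lay in no $\Phi(x)$, then every $\Phi(x)\subseteq\Phi(j)\setminus\{\beta\}=\Phi(js)$, making $js\prec j$ a smaller upper bound of $X$, a contradiction. The work is then to propagate this from $\Phi^R(j)$ to all of $\Phi(j)$; in Dyer--Hohlweg this is done via the structure of maximal dihedral reflection subgroups (restricting each $\Phi(x)$ to a rank-$2$ subsystem gives an interval from one end, which makes the coclosedness check tractable). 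Either way, the missing step is substantive and is the actual content of the cited result.
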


\begin{corollarySec}\cite[Corollary 4.7 (2)]{HD16} \label{cor:phi1_of_join_isphi1_of_x}
    If $x \vee y$ exists and $\ell(s_{\beta}(x \vee y)) = \ell(x \vee y ) - 1$, then either $\ell(s_{\beta}x) = \ell(x) -1$ or $\ell(s_{\beta}y) = \ell(y) -1$.
\end{corollarySec}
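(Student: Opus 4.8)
The plan is to recognise the statement as the containment $\Phi^1(x\vee y) \subseteq \Phi^1(x) \cup \Phi^1(y)$ of short inversion sets, and then to derive it directly from the two structural facts already recorded: that a join's inversion set is the $cone_\Phi$-closure of the union of the inversion sets (\Cref{inversion_set_of_join}), and that $\Phi^1(w)$ is the \emph{minimal} generating set of $\Phi(w)$ under $cone_\Phi$ (\Cref{inversion_set_is_determined_by_phi1}).

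First I would translate both the hypothesis and the conclusion into the language of short inversions. Since $\beta \in \Phi^+$ and the standard reflection--length characterisation of the left inversion set gives that $\ell(s_\beta w) < \ell(w)$ is equivalent to $\beta \in \Phi(w)$, the hypothesis $\ell(s_\beta(x\vee y)) = \ell(x\vee y) - 1$ says exactly that $\beta$ lies in $\Phi(x\vee y)$ and reduces length by one, i.e. that $\beta \in \Phi^1(x\vee y)$ by definition of the short inversions. In the same way, the two alternatives in the conclusion are precisely $\beta \in \Phi^1(x)$ and $\beta \in \Phi^1(y)$. Thus it suffices to prove $\Phi^1(x\vee y) \subseteq \Phi^1(x) \cup \Phi^1(y)$.

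Next I would compute $\Phi(x\vee y)$. As $x\vee y$ exists, the set $\{x,y\}$ is bounded in the weak order, so \Cref{inversion_set_of_join} applies and gives $\Phi(x\vee y) = cone_\Phi(\Phi(x)\cup\Phi(y))$. Writing $\Phi(x) = cone_\Phi(\Phi^1(x))$ and $\Phi(y) = cone_\Phi(\Phi^1(y))$ via \Cref{inversion_set_is_determined_by_phi1}, and invoking the elementary transitivity of the closure operator, namely $cone_\Phi\big(cone_\Phi(A) \cup cone_\Phi(B)\big) = cone_\Phi(A\cup B)$ for $A,B \subseteq \Phi^+$ (which holds because a non-negative combination of non-negative combinations of $A\cup B$ is again a non-negative combination of $A\cup B$, intersected throughout with $\Phi^+$), I obtain $\Phi(x\vee y) = cone_\Phi(\Phi^1(x)\cup\Phi^1(y))$.

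Finally, setting $A = \Phi^1(x)\cup\Phi^1(y) \subseteq \Phi^+$, the element $x\vee y$ satisfies $\Phi(x\vee y) = cone_\Phi(A)$, so the minimality clause of \Cref{inversion_set_is_determined_by_phi1} forces $\Phi^1(x\vee y) \subseteq A$, which is exactly the containment sought, and hence the corollary. The only points requiring care are the initial reformulation of the length hypothesis as membership in $\Phi^1(x\vee y)$ and the closure-transitivity identity; beyond these the argument is a direct assembly of the two cited propositions, so I do not anticipate a genuine obstacle.
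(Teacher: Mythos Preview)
Your argument is correct. The paper does not supply its own proof of this corollary: it is quoted verbatim from \cite[Corollary 4.7(2)]{HD16} as a preliminary fact, so there is no in-paper proof to compare against. Your derivation --- rewriting the hypothesis as $\beta \in \Phi^1(x\vee y)$, expressing $\Phi(x\vee y)$ as $cone_\Phi(\Phi^1(x)\cup\Phi^1(y))$ via \Cref{inversion_set_of_join} and \Cref{inversion_set_is_determined_by_phi1}, and then invoking the minimality clause of \Cref{inversion_set_is_determined_by_phi1} --- is precisely the intended route, and indeed is how the result is obtained in \cite{HD16} itself.
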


\subsubsection{The Coxeter complex} \label{the_coxeter_complex}

The Coxeter complex $\Sigma(W,S)$ is a useful combinatorial object for providing geometric intuition for many of the concepts discussed and illustrated in the figures of this article. While the formal construction of $\Sigma(W,S)$ is not required for any result in this work, it will be helpful to provide a brief description. The reader is directed to \cite{buildings} for further details.

For each $w \in W$, let $C_w$ be a combinatorial simplex with vertices indexed by $S$. For each $w \in W$ and $s \in S$ identify the faces of the simplices $C_w$ and $C_{ws}$ via their matching vertices in $S \setminus \{ s \}$. The resulting simplicial complex $\Sigma(W,S)$ is known as the \textit{Coxeter complex} of $(W,S)$. The Coxeter complex has maximal simplices $C_w$, $w \in W$ called the \textit{alcoves} of the complex which we identify with the elements of $W$. The Coxeter group $W$ then acts on the alcoves simply transitively whereby $wC_v = C_{w \cdot v}$ for all $w,v \in W$.

For $\beta \in \Phi^+$ the set of simplices
\begin{equation*}
    H_{\beta} = \{ \sigma \in \Sigma(W,S) \mid s_{\beta}(\sigma) = \sigma\}
\end{equation*}

is a \textit{wall} of the Coxeter complex. Note that no alcoves are contained in a wall, and thus one may consider the wall $H_{\beta}$ separating the half-spaces $H_{\beta}^+$ and $H_{\beta}^-$ (the notion of half-spaces here coincide with that of half-spaces discussed in \Cref{root_system} and walls play the same role as hyperplanes). In particular, for $w \in W$, its inversion set $\Phi(w)$ bijectively corresponds to the set of walls separating the alcove $C_w$ from $C_e$ and the right descent roots $\Phi^R(w)$ are the walls incident to the alcove $C_w$ which separate $C_w$ from $C_e$.

\subsubsection{The Cayley graph of $(W,S)$} \label{the_cayley_graph}
The Cayley graph of $(W,S)$ is the dual graph of the Coxeter complex, recording adjacency of alcoves across walls labelled by simple reflections. Thus,
geometric features of the Coxeter complex translate naturally into combinatorial properties
of paths and convexity in the Cayley graph.

More precisely, denote $X^1$ to be the Cayley graph of $W$ with vertex set $X^0 = W$ and edge-set $\{ (w, ws) \mid w \in W, s \in S \}$ where each edge is of length 1. For vertices $x,y \in X^1$ a \textit{geodesic} between $x$ and $y$ is a minimal length path between $x$ and $y$ in $X^1$. In the setting of $X^1$, for $w \in W$, $\ell(w)$ is the length of geodesics from the identity $e$ to $w$.

The left action of $W$ on $X^0$ induces an action of $W$ on $X^1$. Under the identification of the Cayley graph with the dual graph of the Coxeter complex,
walls admit a consistent interpretation in both settings.
For a reflection $r \in R$, the \textit{wall} $H_r$ is defined as the fixed point set of $r$ acting
on the Cayley graph $X^1$. Equivalently, an edge $(w,ws)$ crosses $H_r$ if and only if
$r = wsw^{-1}$, that is, if the corresponding adjacent alcoves are separated by the
reflecting wall associated to $r$ in the Coxeter complex (see \Cref{fig:coxeter_complex_with_cayley_graph}). 

Under the standard geometric representation, each root $\beta \in \Phi$ determines a
reflecting hyperplane $H_\beta \subset V$, whose induced wall $H_{s_{\beta}}$ in the Coxeter complex and fixed-point set in the Cayley graph define the same separation of $W$ into half-spaces. Throughout this article, most arguments are phrased in terms of roots and inversion sets; however, in the proofs of \Cref{thm:witnesses_are_gated}  and \Cref{prop:bijection_join_refine_phi1} we make essential use of the Cayley graph description, which may also be visualised geometrically via the Coxeter complex.

\begin{figure}[H]
    \centering
    \begin{tikzpicture}
        \basefora
        \cayleyfora
    \end{tikzpicture}
    
    \caption{Coxeter complex for the Coxeter group of type $\widetilde{A}_2$ with Cayley graph overlaid.}
    \label{fig:coxeter_complex_with_cayley_graph}
\end{figure}

Two distinct walls $H_r$ and $H_q$ \textit{intersect} if $H_r$ is not contained in a half-space for $H_q$ (this relation is symmetric), or equivalently, $\langle r, q \rangle$ is finite. We say $r, q$ are \textit{sharp-angled} if $r$ and $q$ do not commute and there is $w \in W$ such that $wrw^{-1}$ and $wqw^{-1}$ are in $S$. Then there is a component of $X^1 \setminus (H_r \cup H_q)$ whose intersection with $X^0$ is a \textit{geometric fundamental domain} for the action of $\langle r, q \rangle$ on $X^0$.

For $J \subseteq R$ and $w \in X^0$ the $J(w)$-\textit{residue} is the subgraph of $X^1$ induced by the action of the subgroup generated by $J$ on $w$. In particular, note that for $w \in W$ if $\alpha, \beta \in \Phi^R(w)$ then $w^{-1}s_{\alpha}w = s$ and $w^{-1}s_{\beta}w = t$ are in $S$ and $\langle s_{\alpha}, s_{\beta} \rangle$ is a finite dihedral reflection subgroup with $2m(s,t)$ elements. Then $w\Phi_{\langle s, t \rangle}^+$ is the set of $m(s,t)$ roots separating the $2m(s,t)$ elements of the residue $\langle s_{\alpha}, s_{\beta} \rangle(w) = wW_{\langle s, t \rangle}$. We will use this rank--two residue structure in path--modification arguments in the proofs of \Cref{thm:witnesses_are_gated} and \Cref{prop:bijection_join_refine_phi1}.

In those proofs, we also consider an order on paths in $X^1$ using the lexicographic order on tuples with entries in $\mathbb{N}$. For tuples $\textbf{a} = (a_1, \ldots, a_m)$ and $\textbf{b} = (b_1, \ldots, b_n)$ we have $\textbf{a} \le \textbf{b}$ if and only if one of the following holds:
\begin{enumerate}
    \item[(i)] There exists $j \le \min(m, n)$ such that $a_i = b_i$ for all $i < j$ and $a_j < b_j$, or;
    \item[(ii)] $a_i = b_i$ for all $i \le m$ and $m < n$.
\end{enumerate}
For a path $\pi = (p_0, \ldots, p_n)$ in $X^1$ define the ordered tuple $\Pi = (n_L, \ldots, n_1)$ where $n_j$ is the number of elements $p_i \in \pi$ with $\ell(p_i) = j$ and $L = max_{i=0}^{n} \ell(p_i)$. Then for paths $\pi, \pi'$ we have $\pi \le \pi'$ if and only if $\Pi \le \Pi'.$

\subsubsection{Cone Types and Boundary roots of $W$}
 The definition of cone types, cone type parts and boundary roots were introduced in \Cref{intro}. We now recall additional structure and properties of cone types in Coxeter groups relevant to the results in this article.

The \textit{cone type arrangement} or \textit{cone type partition} $\mathscr{T}$ is the partition of $W$ into its cone type parts.
\begin{equation*}
    Q(T) = \{x \in W \mid T(x^{-1}) = T \}
\end{equation*}
for each $T \in \mathbb{T}$ (see \Cref{fig:g2_conetype_partition} for an example). Each part of this partition can be considered as a \textit{state} of the minimal finite state automaton recognising the language of reduced words of $(W,S)$ (this fact isn't necessarily required, but is discussed in some depth in \cite{PARKINSON2022108146} for interested readers). The following theorem is an interesting characterisation of $\mathscr{T}$ and reveals that the parts $\{ Q(T) \mid T \in \mathbb{T} \}$ of $\mathscr{T}$ can realised as intersections of the set of cone types $\mathbb{T}$.

\begin{theoremSec} \label{thm:conetypepartition_asintersectionofconetypes}
    \cite[Theorem 3.4.3]{Yau21}
    For $x,y \in W$ define $x \sim y$ if and only if, for every cone type  $T \in \mathbb{T}$, one has 
    \begin{equation*}
        x \in T \mbox{ if and only if } y \in T
    \end{equation*}
    Then the partition of $W$ by the $\sim$ equivalence classes is $\mathscr{T}$.
\end{theoremSec}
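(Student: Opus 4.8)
The plan is to reduce the statement to a single elementary symmetry of cone types: for all $u,v\in W$ one has $v\in T(u)$ if and only if $u^{-1}\in T(v^{-1})$. This is immediate from the definition, since $v\in T(u)$ means $\ell(uv)=\ell(u)+\ell(v)$, and applying the identity $\ell(g)=\ell(g^{-1})$ with $g=uv$ rewrites this as $\ell(v^{-1}u^{-1})=\ell(v^{-1})+\ell(u^{-1})$, i.e. $u^{-1}\in T(v^{-1})$. I would record this as a preliminary observation; note it is also consistent with (but we will not need) the inversion-set characterisation $v\in T(w)\iff \Phi(v)\cap\Phi(w^{-1})=\emptyset$. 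Since $\sim$ is visibly an equivalence relation, its classes do partition $W$, so it suffices to prove that $x\sim y$ if and only if $T(x^{-1})=T(y^{-1})$, as the parts of $\mathscr{T}$ are exactly the sets $Q(T)=\{z\in W\mid T(z^{-1})=T\}$, and $x,y$ lie in a common part precisely when $T(x^{-1})=T(y^{-1})$.

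For the implication $x\sim y\Rightarrow T(x^{-1})=T(y^{-1})$, I would argue by symmetry that $T(x^{-1})\subseteq T(y^{-1})$. Let $v\in T(x^{-1})$. By the symmetry observation (with $u=x^{-1}$), this is equivalent to $x\in T(v^{-1})$. Now $T(v^{-1})\in\mathbb{T}$, so $x\sim y$ forces $y\in T(v^{-1})$, and applying the symmetry observation again (with $u=y^{-1}$) gives $v\in T(y^{-1})$. Hence $T(x^{-1})\subseteq T(y^{-1})$, and the reverse inclusion follows by interchanging $x$ and $y$; thus $T(x^{-1})=T(y^{-1})$ and $x,y$ both lie in $Q(T(x^{-1}))$.

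For the converse, suppose $T(x^{-1})=T(y^{-1})$ and fix any $T\in\mathbb{T}$, say $T=T(w)$. By the symmetry observation, $x\in T(w)$ if and only if $w^{-1}\in T(x^{-1})$, and likewise $y\in T(w)$ if and only if $w^{-1}\in T(y^{-1})$; since $T(x^{-1})=T(y^{-1})$ these conditions coincide, so $x\in T\iff y\in T$. As $T$ was arbitrary, $x\sim y$. Combining the two implications, the $\sim$-equivalence classes are exactly the sets $Q(T)$ for $T\in\mathbb{T}$, which is the partition $\mathscr{T}$.

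There is essentially no real obstacle here: the only input beyond the definitions of $T(w)$, $\mathbb{T}$ and $\mathscr{T}$ is $\ell(g)=\ell(g^{-1})$, and the main point requiring care is the bookkeeping of inverses. In particular, in the second direction one quantifies over all $T\in\mathbb{T}$ (equivalently over all cone types $T(v^{-1})$ as $v$ ranges over $W$), which is legitimate because $v\mapsto v^{-1}$ is a bijection of $W$; the finer structure of boundary roots $\partial T$ is not needed for this statement.
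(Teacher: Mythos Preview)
Your proof is correct. The key symmetry $v\in T(u)\iff u^{-1}\in T(v^{-1})$ follows exactly as you say from $\ell(g)=\ell(g^{-1})$, and both directions of the equivalence $x\sim y\iff T(x^{-1})=T(y^{-1})$ are cleanly handled by applying this symmetry.

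Note, however, that the paper does not actually supply its own proof of this statement: it is quoted as \cite[Theorem~3.4.3]{Yau21} in the preliminaries and used later (in the proof of Corollary~\ref{cor:tight_gate_arrangement}) as a black box. So there is no paper-side argument to compare against. Your argument is entirely self-contained and uses nothing beyond the definition of cone types and the identity $\ell(g)=\ell(g^{-1})$; in particular it avoids any appeal to boundary roots or the gate structure of $\mathscr{T}$, which is appropriate since the statement is purely about the set-theoretic characterisation of the parts $Q(T)$.
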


As mentioned in \Cref{intro}, each part $Q(T)$ contains a unique minimal length representative $g_T$, called the \textit{gate} of $Q(T)$ which is a prefix of all elements in $Q(T)$ (these are the grey shaded alcoves in the example of  \Cref{fig:g2_conetype_partition}), this was proved in \cite[Theorem 1]{PARKINSON2022108146} and \cite[Theorem 1.3]{ALCO_2024__7_6_1879_0}. These elements are also known in the literature as the \textit{gates of $\mathscr{T}$} and are denoted by $\Gamma$. Moreover, we showed in \cite[Corollary 1.5]{ALCO_2024__7_6_1879_0} that $\Gamma$ is the smallest \textit{Garside shadow} in $W$; the smallest subset of $W$ containing $S$ and is closed under join and suffix in the right weak order. 

The notion of Garside shadows will only be briefly discussed again in \Cref{sec:computing_gamma0}; we encourage the reader to see \cite{HOHLWEG2016431}, \cite{HD16} and \cite{santos2025garsideshadowsbiautomaticstructures} for more on these fascinating structures.

 \begin{theoremSec}\cite[Theorem 1]{PARKINSON2022108146} \label{thm:gates_of_conetype_partition}
     For each cone type $T$ there is a unique element $m_T \in W$ of minimal length such that $T(m_T) = T$. Moreover, if $w \in W$ with $T(w) = T$ then $m_T$ is a suffix of $w$.
 \end{theoremSec}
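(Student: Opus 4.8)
The plan is to recast the statement in terms of the cone type partition, reduce it to a single closure property of its parts, and then isolate that property as the real content. Throughout I use the standard fact that $v \in T(w)$ if and only if $w\,\Phi(v) \subseteq \Phi^{+}$, equivalently $\Phi(v) \cap \Phi(w^{-1}) = \emptyset$; in particular $T(w)$, viewed as a subset of $W$, depends only on $\Phi(w^{-1})$, is closed under taking prefixes in the right weak order (by the triangle inequality for lengths), and equals $\bigcap_{\gamma \in \Phi(w^{-1})} H_{\gamma}^{+}$, hence is convex by \Cref{lem:convexity}. Passing to inverses, the theorem is equivalent to the assertion that every cone type part $Q(T) = \{x \in W \mid T(x^{-1}) = T\}$ has a \emph{gate}: a necessarily unique element $g_{T}$ of minimal length which is a prefix of every element of $Q(T)$. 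One then sets $m_{T} := g_{T}^{-1}$, and both the minimality of $m_{T}$ and the suffix assertion follow at once, since inversion preserves length.

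Next I would reduce to a single closure property. Fix $x \in Q(T)$ and let $z$ be any prefix of $x$. Since every cone type is prefix-closed, $z$ belongs to every cone type containing $x$, so by \Cref{thm:conetypepartition_asintersectionofconetypes} the condition $z \in Q(T)$ is equivalent to ``no cone type contains $z$ while excluding $x$''. Hence it suffices to prove the \emph{Key Claim}: \emph{if $x, y \in Q(T)$ then their meet $x \wedge y$ in the right weak order} (their longest common prefix) \emph{again lies in $Q(T)$}. Granting this, choose $g \in Q(T)$ of minimal length; for every $x \in Q(T)$ the element $g \wedge x$ lies in $Q(T)$ and is a prefix of $g$, so $\ell(g \wedge x) \leq \ell(g)$ and hence $g \wedge x = g$, i.e.\ $g$ is a prefix of $x$. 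Uniqueness of $g$ is then clear, and the theorem follows.

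To establish the Key Claim, write $x = za$ and $y = zb$ with $z = x \wedge y$, both products reduced, so that $D_{L}(a) \cap D_{L}(b) = \emptyset$ by maximality of the common prefix. From $\Phi(z) \subseteq \Phi(x)$ we get $T(z^{-1}) \supseteq T(x^{-1}) = T$, and likewise $T(z^{-1}) \supseteq T(y^{-1}) = T$, so only the inclusion $T(z^{-1}) \subseteq T$ remains. If it failed, then $T(z^{-1}) \neq T$, so $z$ lies in a cone type part distinct from the cone type parts of $x$ and $y$; since every cone type is prefix-closed and $z$ is a prefix of both, \Cref{thm:conetypepartition_asintersectionofconetypes} then furnishes cone types $T(w')$ and $T(w'')$ with $z \in T(w') \cap T(w'')$, $x \notin T(w')$ and $y \notin T(w'')$. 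Unwinding the definitions through the cocycle identity $\Phi(x) = \Phi(z) \sqcup z\,\Phi(a)$, one extracts from $z \in T(w')$ and $x \notin T(w')$ a root $\gamma \in \Phi((w')^{-1})$ with $z^{-1}\gamma \in \Phi(a)$, and symmetrically from $T(w'')$ a root $\gamma'$ with $z^{-1}\gamma' \in \Phi(b)$. I would then induct on $\ell(a) + \ell(b)$: if $a = e$ or $b = e$ then $z$ equals $x$ or $y$, contradicting $x \notin T(w')$ or $y \notin T(w'')$; in the inductive step one peels off a left letter $s \in D_{L}(a)$, first noting that $s \notin D_{R}(z)$ so that $zs$ is still a reduced prefix of $x$, and reduces to a strictly smaller instance, tracking how the relevant short inversions and separating roots evolve via \Cref{prop:phi1_roots_evolution} and \Cref{inversion_set_is_determined_by_phi1}, with \Cref{lem:no_curcuits_or_infinitebonds} and \Cref{lem:boundary_roots_are_phi1} restricting which roots can occur. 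The packaging in the first two steps is formal; the real content, and the step I expect to be the main obstacle, is the Key Claim --- concretely, controlling the interaction of the new inversions $z\,\Phi(a)$ of $x$ and $z\,\Phi(b)$ of $y$ beyond their common prefix, and excluding a cone type that sees $z$ but neither $x$ nor $y$ --- and the delicate point is making the descent-induction uniform, i.e.\ ensuring that after peeling a left letter one still has a counterexample of the same shape, which is exactly where the disjointness $D_{L}(a) \cap D_{L}(b) = \emptyset$ must be used.
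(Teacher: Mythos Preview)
This theorem is quoted from \cite{PARKINSON2022108146} and is not proved in the present paper, so there is no ``paper's own proof'' to compare against here. Your reformulation in terms of gates of $Q(T)$ and your reduction to the Key Claim (closure of $Q(T)$ under meets in right weak order) are both correct and natural; the passage from the Key Claim to the theorem is clean.

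The genuine gap is in your proof of the Key Claim. The induction you sketch is not well-posed: when you peel a left letter $s\in D_L(a)$ and replace $z$ by $z'=zs$, the element $z'$ is a prefix of $x$ but, precisely because $D_L(a)\cap D_L(b)=\emptyset$, it is \emph{not} a prefix of $y$; so $z'$ is no longer $x\wedge y$ and you have left the hypothesis of the Key Claim. You never state what the new inductive instance is supposed to be, nor what invariant survives the step. Moreover, two of the tools you invoke look misplaced: \Cref{lem:no_curcuits_or_infinitebonds} concerns when a root fails to be elementary because its support contains a circuit or an infinite bond, and \Cref{lem:boundary_roots_are_phi1} is about elementary/boundary roots; the Key Claim is a statement about arbitrary cone types in arbitrary Coxeter groups and cannot hinge on whether certain roots are elementary. (Also, the detour through \Cref{thm:conetypepartition_asintersectionofconetypes} to produce $T(w')$ and $T(w'')$ is unnecessary: since you already know $T(z^{-1})\supsetneq T$, you can directly pick $v\in T(z^{-1})\setminus T$.)

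If you want to salvage the direct approach, a cleaner start is: choose $v\in T(z^{-1})\setminus T$ of minimal length; then for every $s\in D_R(v)$ one has $vs\in T$, and comparing $\Phi(v)=\Phi(vs)\sqcup\{vs\,\alpha_s\}$ with $\Phi(x)$ forces $|D_R(v)|=1$ and $\Phi(x)\cap\Phi(v)=\Phi(y)\cap\Phi(v)=\{\beta\}$ for the unique $\beta\in\Phi^R(v)$. Thus $\beta\in\Phi(x)\cap\Phi(y)$ but $\beta\notin\Phi(z)$, and by \Cref{lem:boundary_roots_are_phi1} even $\beta\in\Phi^1(x)\cap\Phi^1(y)$. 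The remaining (and substantial) step is to turn this into a contradiction; it is essentially the content of the theorem, and your sketch does not supply it.
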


\begin{remark}
    Note that in \Cref{thm:gates_of_conetype_partition}, $m_T = g_T^{-1}$ where $g_T$ is the gate of the part $Q(T)$.
\end{remark}

\begin{figure}
    	\includegraphics[scale=0.18]{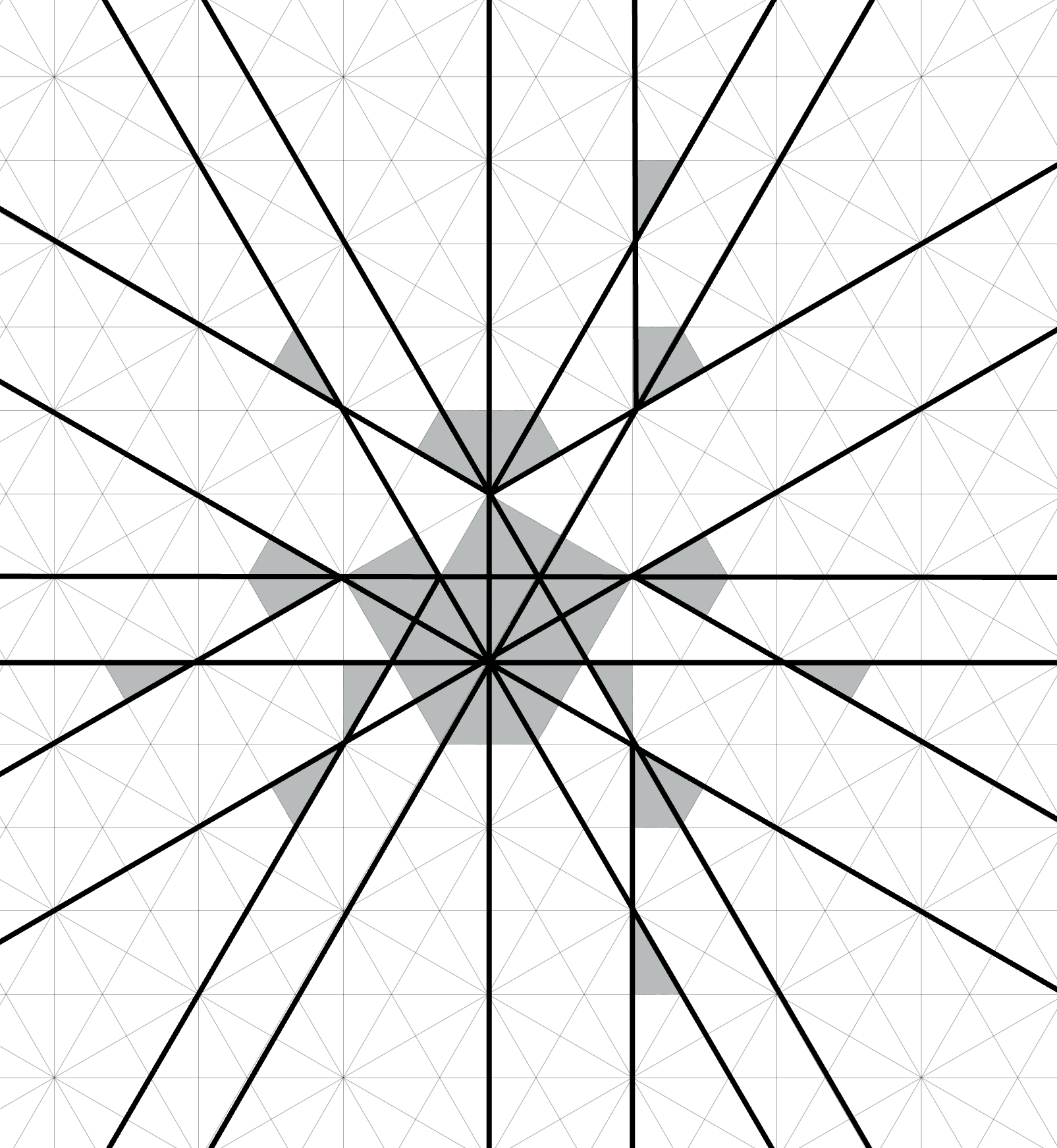}

    \caption{The cone type partition $\mathscr{T}$ for the Coxeter group of type $\widetilde{G}_2$. Each part $Q(T)$ contains the elements $x$ such that $T(x^{-1}) = T$. The shaded in alcove of each part is the gate of its part $Q(T)$.}
    \label{fig:g2_conetype_partition}
\end{figure}

The \textit{tight gates} of $\mathscr{T}$ is the set
$$\Gamma^0 = \{ g \in \Gamma \mid |\Phi^R(g)| = 1\}$$

The following theorem gives a characterisation of a gate $x$ in terms of $\Phi^R(x)$.

\begin{theoremSec}\cite[Theorem 4.33]{PARKINSON2022108146} \label{thm:gates_characterised_by_phi0}
    Let $x \in W$. Then $x \in \Gamma$ if and only if for each $\beta \in \Phi^R(x)$ there exists $w \in W$ with $\Phi(x) \cap \Phi(w) = \{ \beta \}$.
\end{theoremSec}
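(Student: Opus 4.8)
The plan is to reduce $x \in \Gamma$ to a local condition on the right descents of $x$, and then to match that condition with the existence of witnesses, one root at a time. By definition $x \in \Gamma$ means $x^{-1} = m_T$ with $T := T(x^{-1})$, and by \Cref{thm:gates_of_conetype_partition} this holds if and only if no proper suffix of $x^{-1}$ has cone type $T$. A short length computation shows that a suffix $v$ of $w$ always satisfies $T(w) \subseteq T(v)$, so cone types only enlarge under passing to suffixes; combined with \Cref{thm:gates_of_conetype_partition} this lets one replace ``no proper suffix'' by ``no suffix of colength one''. Since the colength-one suffixes of $x^{-1}$ are precisely the elements $(xs)^{-1}$ for $s \in D_R(x)$, we get
$$ x \in \Gamma \iff T((xs)^{-1}) \neq T(x^{-1}) \ \text{ for every } s \in D_R(x). $$
A direct computation with the action of $W$ on $V$ shows that for $s \in D_R(x)$ and $\beta := -x\alpha_s \in \Phi^R(x)$ one has $\Phi(xs) = \Phi(x) \setminus \{\beta\}$ (note $x^{-1}\beta = -\alpha_s$, and $s$ only changes the sign of $\pm\alpha_s$). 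As $s \mapsto -x\alpha_s$ is a bijection $D_R(x) \to \Phi^R(x)$, it thus suffices to prove, for each $s \in D_R(x)$ with $\beta = -x\alpha_s$, the equivalence
$$ T((xs)^{-1}) \neq T(x^{-1}) \iff \exists\, w \in W \text{ with } \Phi(x) \cap \Phi(w) = \{\beta\}. $$

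To handle this I would work with the set $B(v) = \{\delta \in \Phi^+ \mid \exists\, w,\ \Phi(v) \cap \Phi(w) = \{\delta\}\}$ of roots admitting a witness against $v$; by \Cref{lem:boundary_roots_are_phi1}, $B(v) \subseteq \Phi^1(v)$. Using \cite[Proposition 4.36]{PARKINSON2022108146} one obtains $B(v) = \partial T(v^{-1})$ for all $v$ (the inclusion $\subseteq$ is immediate and $\supseteq$ is the content of that proposition), so $B$ is constant on the parts of $\mathscr{T}$, and then \cite[Theorem 2.8]{PARKINSON2022108146} (that $\partial T$ determines $T$) yields $T((xs)^{-1}) = T(x^{-1})$ if and only if $B(xs) = B(x)$. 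From $\Phi(xs) = \Phi(x)\setminus\{\beta\}$ one reads off directly that $\beta \notin B(xs)$ and that $B(x)\setminus\{\beta\} \subseteq B(xs)$, so the implication ``$\beta \in B(x) \Rightarrow B(xs) \neq B(x)$'' is immediate. The converse reduces to a single lemma: if $\beta = -x\alpha_s \in \Phi^R(x)$ admits no witness against $x$, then $B(xs) \subseteq B(x)$, i.e.\ deleting $\beta$ from $\Phi(x)$ creates no new exposed roots. Granting this, $B(xs) \subseteq B(x) \subseteq B(xs) \cup \{\beta\}$, so $B(xs) \neq B(x)$ forces $\beta \in B(x)$, which finishes the equivalence and hence the theorem.

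The lemma is where I expect the main difficulty to lie. Given $\delta \in B(xs)$ with a witness $w$, so $(\Phi(x)\setminus\{\beta\}) \cap \Phi(w) = \{\delta\}$ and, by \Cref{lem:boundary_roots_are_phi1}, $\delta \in \Phi^1(w)$, the only obstruction to $w$ itself being a witness for $\delta$ against $x$ is that $\beta \in \Phi(w)$, in which case $\Phi(x)\cap\Phi(w) = \{\delta,\beta\}$. The natural fix is to replace $w$ by an element $w' \preceq w$ in the weak order with $\delta \in \Phi(w')$ but $\beta \notin \Phi(w')$; then $\Phi(x)\cap\Phi(w') \subseteq \{\delta,\beta\}$ contains $\delta$ and omits $\beta$, hence equals $\{\delta\}$. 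Producing such a $w'$ amounts to finding a reduced word for $w$ in which $\delta$ is crossed before $\beta$. Here I would exploit that $\delta$ is a short inversion of $w$ — using the evolution formula \Cref{prop:phi1_roots_evolution} and \Cref{inversion_set_is_determined_by_phi1} — together with the hypotheses that $x^{-1}\beta$ is a negated simple root and that $\beta$ has no witness against $x$; this last fact should be precisely what rules out the degenerate scenario in which $\beta$ precedes $\delta$ in every reduced word of $w$. All the remaining parts of the argument are routine manipulations with inversion sets, so this extraction of a $\beta$-free witness is the crux.
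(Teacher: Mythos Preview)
This theorem is not proved in the present paper; it is quoted verbatim from \cite[Theorem~4.33]{PARKINSON2022108146} as background (see \Cref{thm:gates_characterised_by_phi0}), so there is no proof here to compare your proposal against.

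For what it is worth, your strategy is correct. The reduction of $x\in\Gamma$ to the colength-one condition $T((xs)^{-1})\neq T(x^{-1})$ for each $s\in D_R(x)$ is valid, and the identification $B(v)=\partial T(v^{-1})$ is exactly \Cref{thm:boundary_roots_definition}, which together with \Cref{thm:cone_type_formulas} gives $T((xs)^{-1})=T(x^{-1})\iff B(xs)=B(x)$. The lemma you single out as the crux is actually short: given $\delta\in B(xs)$ with witness $w$ and $\beta\in\Phi(w)$ (so $\Phi(x)\cap\Phi(w)=\{\delta,\beta\}$), pick by \Cref{tight_prefixes} a prefix $z\preceq w$ with $\Phi^R(z)=\{\beta\}$. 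Then $\Phi(x)\cap\Phi(z)\subseteq\{\delta,\beta\}$ contains $\beta$; if it equals $\{\beta\}$ you contradict $\beta\notin B(x)$, so $\delta\in\Phi(z)$, and then $z':=s_\beta z$ has $\Phi(z')=\Phi(z)\setminus\{\beta\}$, whence $\Phi(x)\cap\Phi(z')=\{\delta\}$ and $\delta\in B(x)$. So the step you were worried about does not require any delicate reduced-word argument.
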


We list some further elementary results regarding cone types and their boundary roots which will be used throughout. We omit their proofs here.

\begin{lemma}\cite[Lemma 1.15]{PARKINSON2022108146} \label{lem:suffix_conetype_containment}
	If $ x \preceq y$ then $T(y^{-1}) \subseteq T(x^{-1})$.
\end{lemma}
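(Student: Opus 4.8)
The plan is to unwind the definitions of the right weak order and of the cone type, and then reduce the containment to subadditivity of the length function. Write $z = x^{-1}y$. Since $x \preceq y$, by definition $\ell(y) = \ell(x) + \ell(z)$; applying inversion to both sides gives $\ell(y^{-1}) = \ell(z^{-1}) + \ell(x^{-1})$, i.e.\ $x^{-1}$ is a suffix of $y^{-1}$ and $y^{-1} = z^{-1}x^{-1}$ is a reduced decomposition in the sense that no length cancels.

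Next I would take an arbitrary $v \in T(y^{-1})$, so that $\ell(y^{-1}v) = \ell(y^{-1}) + \ell(v)$, substitute $y^{-1} = z^{-1}x^{-1}$, and apply the triangle inequality for $\ell$ twice:
\[
\ell(z^{-1}) + \ell(x^{-1}) + \ell(v) \;=\; \ell(y^{-1}) + \ell(v) \;=\; \ell(z^{-1}x^{-1}v) \;\le\; \ell(z^{-1}) + \ell(x^{-1}v) \;\le\; \ell(z^{-1}) + \ell(x^{-1}) + \ell(v).
\]
Since the leftmost and rightmost expressions coincide, all the inequalities are equalities; in particular $\ell(x^{-1}v) = \ell(x^{-1}) + \ell(v)$, which is exactly the statement $v \in T(x^{-1})$. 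As $v$ was arbitrary, this yields $T(y^{-1}) \subseteq T(x^{-1})$, as required.

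There is essentially no obstacle here: the lemma is elementary and the displayed chain of inequalities already constitutes a complete argument. If one prefers a combinatorial phrasing, the same fact follows by reversing a reduced word for $y$ that begins with a reduced word for $x$ to obtain a reduced word for $y^{-1}$ ending in a reduced word for $x^{-1}$, concatenating with a reduced word for $v$ (legal since $v \in T(y^{-1})$), and observing that the resulting word for $y^{-1}v$ has a contiguous terminal factor spelling $x^{-1}v$; the only point requiring (minor) care is that a contiguous factor of a reduced word is again reduced.
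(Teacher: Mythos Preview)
Your proof is correct. The paper does not give its own proof of this lemma; it simply cites \cite[Lemma 1.15]{PARKINSON2022108146}. Your argument via the subadditivity of $\ell$ is the standard elementary one and is exactly what one would expect the cited reference to contain.
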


The following result formally states the geometric interpretation of cone types as intersection of half-spaces (as illustrated in \Cref{example:conetype_example}) and the next relates the join of a set of elements with their cone types.

\begin{theoremSec} \cite[Corollary 2.9]{PARKINSON2022108146} \label{thm:cone_type_formulas}
    Let $T$ be a cone type. If $T= T(x^{-1})$ Then 
    \begin{equation*}
        T = \bigcap_{\Phi(x)} H_{\alpha}^{+} =
        \bigcap_{\Phi^1(x)} H_{\alpha}^{+} = \bigcap_{\partial T(x^{-1})} H_{\alpha}^{+}
    \end{equation*}
\end{theoremSec}

 \begin{proposition}\cite[Proposition 1.19]{PARKINSON2022108146} \label{cone_type_of_join}
     If $X \subseteq W$ is bounded with $y = \bigvee X$ then $T(y^{-1}) = \bigcap_{x \in X} T(x^{-1})$.
 \end{proposition}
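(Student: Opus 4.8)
The plan is to translate the asserted identity of cone types into an identity of inversion sets and then invoke \Cref{inversion_set_of_join}. By \Cref{thm:cone_type_formulas}, for every $w\in W$ one has $T(w^{-1})=\bigcap_{\alpha\in\Phi(w)}H_{\alpha}^{+}$. First I would record that, for $\alpha\in\Phi^{+}$,
\[
H_{\alpha}^{+}=\{v\in W\mid \ell(s_{\alpha}v)>\ell(v)\}=\{v\in W\mid v^{-1}\alpha\in\Phi^{+}\}=\{v\in W\mid \alpha\notin\Phi(v)\},
\]
the middle equality being the standard fact that the reflections which decrease length on the left of $v$ are exactly the $s_{\gamma}$ with $\gamma\in\Phi(v)$. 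Consequently $T(w^{-1})=\{v\in W\mid \Phi(v)\cap\Phi(w)=\emptyset\}$ for every $w$, so, writing $A:=\bigcup_{x\in X}\Phi(x)$, the proposition is equivalent to
\[
\{v\mid \Phi(v)\cap\Phi(y)=\emptyset\}=\{v\mid \Phi(v)\cap A=\emptyset\}.
\]

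For the inclusion ``$\subseteq$'' it suffices to note that $x\preceq y$ for every $x\in X$ (as $y=\bigvee X$), so $\Phi(x)\subseteq\Phi(y)$ and hence $A\subseteq\Phi(y)$; a $v$ whose inversion set misses $\Phi(y)$ therefore misses $A$. (Equivalently this is \Cref{lem:suffix_conetype_containment} applied to $x\preceq y$.) The substance is the reverse inclusion, where I would argue by contradiction: suppose $\Phi(v)\cap A=\emptyset$ yet some $\gamma\in\Phi(v)\cap\Phi(y)$. By \Cref{inversion_set_of_join}, $\Phi(y)=\Phi(\bigvee X)=cone_{\Phi}(A)$, so $\gamma=\sum_{i}c_{i}\beta_{i}$ with $c_{i}\ge 0$ (not all zero, since $\gamma\neq 0$) and $\beta_{i}\in A$. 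Each $\beta_{i}$ is positive and lies outside $\Phi(v)$, hence $v^{-1}\beta_{i}\in\Phi^{+}$; expanding each $v^{-1}\beta_{i}$ in the basis of simple roots gives only nonnegative coefficients, and since the scalars $c_i$ are nonnegative no cancellation occurs, so $v^{-1}\gamma=\sum_i c_i v^{-1}\beta_i$ is a nonzero root all of whose simple coordinates are $\ge 0$. By the positive/negative dichotomy for roots this forces $v^{-1}\gamma\in\Phi^{+}$, i.e.\ $\gamma\notin\Phi(v)$, contradicting the choice of $\gamma$. Hence $\Phi(v)\cap\Phi(y)=\emptyset$, which yields ``$\supseteq$'' and completes the argument.

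The only genuinely load-bearing inputs are \Cref{inversion_set_of_join} (the inversion set of a join is the $\Phi$-cone of the union of the inversion sets) together with the elementary observation that a nonnegative combination of positive roots which happens to be a root must itself be positive; the single point requiring care is that this combination does not vanish, which is immediate since $\gamma\neq 0$. I expect no serious obstacle beyond correctly setting up the half-space reformulation and keeping track of the direction of the inequalities defining $H_{\alpha}^{+}$.
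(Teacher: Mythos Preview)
The paper does not actually prove this proposition; it is quoted verbatim from \cite[Proposition 1.19]{PARKINSON2022108146} without argument, so there is no in-paper proof to compare against. Your argument is correct and is the natural one: reinterpret $T(w^{-1})$ as $\{v\mid\Phi(v)\cap\Phi(w)=\emptyset\}$, then combine \Cref{inversion_set_of_join} with the positive/negative dichotomy for roots.

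One minor remark on presentation: you invoke \Cref{thm:cone_type_formulas} (which is Corollary~2.9 in the cited source) to obtain the half-space description, while the statement you are proving is Proposition~1.19 there and so precedes it. This is not a genuine circularity, because the only part of \Cref{thm:cone_type_formulas} you use is the first equality $T(x^{-1})=\bigcap_{\alpha\in\Phi(x)}H_\alpha^+$, which is nothing more than the elementary equivalence $\ell(x^{-1}v)=\ell(x)+\ell(v)\Longleftrightarrow\Phi(x)\cap\Phi(v)=\emptyset$; it would be cleaner to state and use that equivalence directly rather than appeal to the packaged corollary.
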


We end this section by recalling two results describing boundary roots of cone types and the existence of minimal witnesses. These results provide the foundational input for \Cref{thm2} and \Cref{cor:witnesses_of_boundary_roots}, where we strengthen
existence statements to uniqueness, canonical minimality and \textit{gatedness}.

\begin{theoremSec} \cite[Theorem 2.6]{PARKINSON2022108146} \label{thm:boundary_roots_definition}
   Let $T$ be a cone type. If $T = T(x^{-1})$ then $\beta \in \partial T$ if and only if there exists $w \in W$ with 
   \begin{equation*}
       \Phi(x) \cap \Phi(w) = \{ \beta \}
   \end{equation*}
   Moreover, if $\beta \in \partial T$ then there exists $w \in W$, independent of $x$, such that $\Phi(x) \cap \Phi(w) = \{ \beta \}$ whenever $T = T(x^{-1})$.
\end{theoremSec}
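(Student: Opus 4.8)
The plan is to establish the equivalence together with the ``moreover'' clause in one stroke, by producing a single witness of $\beta$ that is forced to work for \emph{every} $x\in Q(T)$ simultaneously. The ``if'' direction is immediate from the definition of $\partial T$: if $\Phi(x)\cap\Phi(w)=\{\beta\}$ and $T=T(x^{-1})$, then $x\in Q(T)$ and $w$ exhibits $\beta$ as a boundary root of $T$. For the converse I would first record the standard reformulation of cone types in terms of inversion sets, namely $\ell(uv)=\ell(u)+\ell(v)$ if and only if $\Phi(u^{-1})\cap\Phi(v)=\emptyset$, which gives $v\in T(x^{-1})$ if and only if $\Phi(x)\cap\Phi(v)=\emptyset$. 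In particular the cone type $T(x^{-1})$, viewed as a subset of $W$, equals $\{v\in W:\Phi(x)\cap\Phi(v)=\emptyset\}$ and hence is common to all $x\in Q(T)$. Let $g:=m_T^{-1}$ be the gate of $Q(T)$, which exists by \Cref{thm:gates_of_conetype_partition}; since $m_T$ is a suffix of $x^{-1}$ for every $x\in Q(T)$, the element $g$ is a prefix of every $x\in Q(T)$, so $\Phi(g)\subseteq\Phi(x)$ for all $x\in Q(T)$, and $T(g^{-1})=T$.

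The first real step is to localise boundary roots at the gate: I claim $\partial T\subseteq\Phi(g)$. Let $\beta\in\partial T$, witnessed by $\Phi(x_0)\cap\Phi(w_0)=\{\beta\}$ with $x_0\in Q(T)$. Since $\Phi(g)\subseteq\Phi(x_0)$ we get $\Phi(g)\cap\Phi(w_0)\subseteq\{\beta\}$; were $\beta\notin\Phi(g)$, this intersection would be empty, so $w_0\in T(g^{-1})=T=T(x_0^{-1})$, forcing $\Phi(x_0)\cap\Phi(w_0)=\emptyset$, a contradiction. Hence $\beta\in\Phi(g)$, and consequently $\Phi(g)\cap\Phi(w_0)=\{\beta\}$, so the set
$$Z:=\{\,w\in W:\Phi(g)\cap\Phi(w)=\{\beta\}\,\}$$
is nonempty and, being defined via $g$ alone, depends only on $T$ and $\beta$.

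Now comes the main point. I would choose $w^{\ast}\in Z$ with $\Phi(w^{\ast})$ minimal under inclusion among $\{\Phi(w):w\in Z\}$ (possible since inversion sets are finite), and claim $\Phi(x)\cap\Phi(w^{\ast})=\{\beta\}$ for all $x\in Q(T)$ — which yields both remaining assertions at once, with $w=w^{\ast}$. The inclusion $\{\beta\}\subseteq\Phi(x)\cap\Phi(w^{\ast})$ is clear, since $\Phi(g)\subseteq\Phi(x)$ and $\Phi(g)\cap\Phi(w^{\ast})=\{\beta\}$. For the reverse inclusion, suppose toward a contradiction that $\gamma\in\Phi(x)\cap\Phi(w^{\ast})$ with $\gamma\neq\beta$, for some $x\in Q(T)$. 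First, no proper prefix $v$ of $w^{\ast}$ can satisfy $\beta\in\Phi(v)$: such a $v$ would have $\Phi(v)\subseteq\Phi(w^{\ast})$ and $\beta\in\Phi(v)\cap\Phi(g)$, hence $\Phi(v)\cap\Phi(g)=\{\beta\}$ and $v\in Z$ with $\Phi(v)$ a strictly smaller inversion set than $\Phi(w^{\ast})$, against minimality. Pick $s\in D_R(w^{\ast})$ (nonempty as $w^{\ast}\neq e$); then $\Phi(w^{\ast}s)=\Phi(w^{\ast})\setminus\{-w^{\ast}\alpha_s\}$, and since $w^{\ast}s$ is a proper prefix of $w^{\ast}$ we must have $\beta=-w^{\ast}\alpha_s$. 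Then $\Phi(w^{\ast}s)=\Phi(w^{\ast})\setminus\{\beta\}$ still contains $\gamma$ but meets $\Phi(g)$ trivially, so $w^{\ast}s\in T(g^{-1})=T=T(x^{-1})$, i.e.\ $\Phi(x)\cap\Phi(w^{\ast}s)=\emptyset$ — contradicting $\gamma\in\Phi(x)\cap\Phi(w^{\ast}s)$. This proves the claim, and hence the theorem.

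The step I expect to be the main obstacle is this last one, and the two ideas that make it run are: minimising the \emph{inclusion-minimal inversion set} within $Z$ (rather than, say, length), and the observation that removing a right descent from $w^{\ast}$ can delete only the root $\beta$ without leaving $Z$. The enabling fact is the localisation $\partial T\subseteq\Phi(g)$, which replaces the $x$-dependent witness sets $\{w:\Phi(x)\cap\Phi(w)=\{\beta\}\}$ by the single set $Z$; the translation of cone types into the condition $\Phi(x)\cap\Phi(v)=\emptyset$ is routine but is used repeatedly and should be set down at the start.
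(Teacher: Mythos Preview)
The paper does not prove this statement: it is recorded in the preliminaries with a citation to \cite[Theorem~2.6]{PARKINSON2022108146} and no proof is given. Your argument is correct and self-contained; it relies only on the characterisation $v\in T(x^{-1})\Leftrightarrow\Phi(x)\cap\Phi(v)=\emptyset$ (which is \Cref{thm:cone_type_formulas} restricted to the first intersection) and on the existence of the gate (\Cref{thm:gates_of_conetype_partition}), both also imported from the same reference.

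It is worth noting that the paper subsequently proves genuine strengthenings in \Cref{sec: Witnesses of boundary roots}: \Cref{thm:witnesses_are_gated} shows that each witness set $\partial T(x^{-1})_\beta$ is convex and gated with gate in $\Gamma^0$, and \Cref{cor:same_witness_for_all_elements} then deduces that this unique minimal witness is common to every $x\in Q(T)$. Your argument --- localise $\beta$ to $\Phi(g)$, then pass to a minimal witness at $g$ and show it works uniformly --- is essentially the skeleton of the proof of \Cref{cor:same_witness_for_all_elements}; the paper's route carries more geometric content (a path-shortening argument in the Cayley graph) because it is aiming for the sharper conclusions about convexity and tightness of the gate. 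One minor remark: inclusion-minimality of $\Phi(w^{\ast})$ could just as well be length-minimality, since the only comparisons you make are against proper prefixes of $w^{\ast}$.
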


\begin{proposition} \cite[Proposition 4.34]{PARKINSON2022108146} \label{minimal_witness}
    Let $x \in W$ and $\beta \in \Phi^+$. Suppose there exists $w \in W$ such that $\Phi(x) \cap \Phi(w) = \{\beta\}$, and let $w$ be of minimal length subject to this property. Then $\Phi^R(w) = \{ \beta \}$ and $w$ is a gate.
\end{proposition}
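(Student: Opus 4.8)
The plan is to establish $\Phi^R(w)=\{\beta\}$ first by a direct minimality argument, and then to read off that $w$ is a gate from \Cref{thm:gates_characterised_by_phi0}. The one standard fact I would invoke is that whenever $s\in D_R(w)$, writing out the inversion set along a reduced word for $w$ ending in $s$ gives $\Phi(w)=\Phi(ws)\sqcup\{-w\alpha_s\}$, so that $\Phi(ws)=\Phi(w)\setminus\{-w\alpha_s\}$ and $-w\alpha_s\in\Phi^R(w)$. I would also note at the outset that since $\beta\in\Phi(x)\cap\Phi(w)$ the inversion set $\Phi(w)$ is nonempty, hence $w\neq e$ and $\Phi^R(w)\neq\emptyset$.

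The key step is to show $\Phi^R(w)\subseteq\{\beta\}$. Suppose $\gamma\in\Phi^R(w)$ with $\gamma\neq\beta$; write $\gamma=-w\alpha_s$ with $s\in D_R(w)$ and set $w'=ws$. Then $\Phi(w')=\Phi(w)\setminus\{\gamma\}$, so $\Phi(x)\cap\Phi(w')=(\Phi(x)\cap\Phi(w))\setminus\{\gamma\}=\{\beta\}\setminus\{\gamma\}=\{\beta\}$, since $\gamma\neq\beta$. But $\ell(w')=\ell(w)-1<\ell(w)$, contradicting the minimality of $w$ among elements $u$ with $\Phi(x)\cap\Phi(u)=\{\beta\}$. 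Hence every element of $\Phi^R(w)$ equals $\beta$, and combined with $\Phi^R(w)\neq\emptyset$ this gives $\Phi^R(w)=\{\beta\}$.

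Finally, I would apply \Cref{thm:gates_characterised_by_phi0}: $w$ is a gate if and only if for every $\gamma\in\Phi^R(w)$ there exists $v\in W$ with $\Phi(w)\cap\Phi(v)=\{\gamma\}$. Since $\Phi^R(w)=\{\beta\}$, it is enough to exhibit a single $v$ with $\Phi(w)\cap\Phi(v)=\{\beta\}$, and $v=x$ works by hypothesis; thus $w\in\Gamma$. The only place where mild care is needed is the first step — keeping left versus right inversion sets straight when computing $\Phi(ws)$ — and beyond that bookkeeping the argument is immediate, so I do not expect a genuine obstacle here.
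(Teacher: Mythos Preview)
Your proof is correct. The paper does not give its own proof of this proposition --- it is quoted verbatim as \cite[Proposition 4.34]{PARKINSON2022108146} --- so there is nothing to compare against here; that said, your descent argument (strip off a right descent root $\gamma\neq\beta$ to contradict minimality, then invoke \Cref{thm:gates_characterised_by_phi0} with $v=x$) is exactly the standard one and matches the spirit of the closely related \Cref{tight_prefixes} in the paper.
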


\section{Witnesses of boundary roots} \label{sec: Witnesses of boundary roots}

In this section, we show that for any element $x \in W$ and $\beta \in \Phi(x)$, the set of elements $y \in W$ satisfying $\Phi(x) \cap \Phi(y) = \{ \beta \} $ is convex in the right weak order and admits a unique minimal representative.  By \Cref{defn:witnesses_of_beta_wrt_Tx_inverse} these are the  \textit{witnesses} of $\beta$ with respect to $x$ and means that $\beta$ is a \textit{boundary root} of the cone type $T := T(x^{-1})$. 

Furthermore, we show that this minimal witness is the same one for all $z \in Q(T)$. In addition, we show that the set of \textit{tight gates} $\Gamma^0$ is closed under suffix. We begin with a basic characterisation of the set of witnesses for a boundary root in terms of intersections of half spaces.

\begin{lemma} \label{witnesses_are_convex}
    Let $x \in W$ and $T:= T(x^{-1})$. Then
    \begin{equation*}
        \partial T(x^{-1})_{\beta} = H_{\beta}^{-} \cap \big( \bigcap_{\alpha \in \Phi(x) \setminus \{ \beta \} } H_{\alpha}^{+} \big)
    \end{equation*}
    and the set $\partial T(x^{-1})_{\beta}$ is convex.
\end{lemma}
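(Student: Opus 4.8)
The plan is to unwind both sides of the claimed identity into conditions on inversion sets, match them, and then read off convexity from the fact that the right-hand side is visibly an intersection of half spaces. Throughout I take $\beta\in\partial T(x^{-1})$; by \Cref{lem:boundary_roots_are_phi1} any witness $y$ of $\beta$ with respect to $x$ satisfies $\beta\in\Phi^1(x)\subseteq\Phi(x)$, so in particular $\beta\in\Phi(x)$. I would then record the elementary dictionary between half spaces and inversion sets: for a positive root $\alpha$ and $w\in W$ one has $w\in H_\alpha^-$ iff $\alpha\in\Phi(w)$ (since $\ell(s_\alpha w)<\ell(w)$ is equivalent to $w^{-1}\alpha<0$), and dually $w\in H_\alpha^+$ iff $\alpha\notin\Phi(w)$. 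Applying this termwise, $w$ lies in $H_\beta^-\cap\bigcap_{\alpha\in\Phi(x)\setminus\{\beta\}}H_\alpha^+$ exactly when $\beta\in\Phi(w)$ and $\Phi(w)\cap(\Phi(x)\setminus\{\beta\})=\emptyset$.

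Next I would combine these two conditions. Because $\beta\in\Phi(x)$, the statement ``$\beta\in\Phi(w)$ and $\Phi(w)\cap(\Phi(x)\setminus\{\beta\})=\emptyset$'' is equivalent to the single statement $\Phi(w)\cap\Phi(x)=\{\beta\}$, which is precisely the defining condition for $w$ to be a witness of $\beta$ with respect to $x$, i.e.\ for $w\in\partial T(x^{-1})_\beta$. This yields the displayed equality. The one place where care is needed is exactly this step: it is the hypothesis $\beta\in\Phi(x)$, which comes from $\beta$ being a boundary root via \Cref{lem:boundary_roots_are_phi1}, that makes deleting $\beta$ from the intersection harmless — without it the right-hand side would be strictly larger than the set of witnesses.

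Finally, convexity is immediate from the formula just established: it exhibits $\partial T(x^{-1})_\beta$ as an intersection of the half spaces $H_\beta^-$ and $H_\alpha^+$ for $\alpha\in\Phi(x)\setminus\{\beta\}$, so it is convex by \Cref{lem:convexity}. I do not anticipate any genuine obstacle here; the entire argument is a translation of the defining condition through the half-space/inversion-set dictionary followed by an application of the convexity criterion.
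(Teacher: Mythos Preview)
Your proposal is correct and is essentially the same argument as the paper's: both translate the witness condition $\Phi(x)\cap\Phi(w)=\{\beta\}$ into the half-space conditions $w\in H_\beta^-$ and $w\in H_\alpha^+$ for $\alpha\in\Phi(x)\setminus\{\beta\}$, and then invoke \Cref{lem:convexity}. Your added remark that $\beta\in\Phi(x)$ is needed for the equivalence (and follows from \Cref{lem:boundary_roots_are_phi1}) makes explicit a hypothesis the paper leaves implicit.
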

\begin{proof}
    The proof is straightforward. If $w \in \partial T(x^{-1})_{\beta}$ then $\alpha \notin \Phi(w)$ for all $\alpha \neq \beta \in \Phi(x)$, so $w \in H_{\alpha}^+$ for all $\alpha \in \Phi(x) \setminus \{ \beta \}$. Since $\beta \in \Phi(w)$ we have $w \in H_{\beta}^-$. The reverse inclusion is also clear. Convexity follows by \Cref{lem:convexity} since $\partial T(x^{-1})_{\beta}$ is an intersection of half spaces.
\end{proof}

\begin{theoremSec} \label{thm:witnesses_are_gated}
Let $x \in W$ and $\beta \in \Phi(x)$ such that there exists $y \in W$ with $\Phi(x) \cap \Phi(y) = \{\beta\}$. Then there exists a unique minimal length element $y_{\beta} \in W$ such that 
\begin{equation*}
	\Phi(x) \cap \Phi(y_{\beta}) = \{ \beta \}
\end{equation*}
Furthermore, $y_{\beta} \in \Gamma^0$ and the set of witnesses $\partial T(x^{-1})_{\beta}$ is a convex, gated set in $W$.
\end{theoremSec}
\begin{proof}
    This proof is inspired by the proof of  \cite[Theorem 1.1]{ALCO_2024__7_6_1879_0}. To set things up, let us pass between reflections in the Cayley graph $X^1$ of $W$ and their corresponding roots in $\Phi^+$. Hence consider $\partial T(x^{-1})_{\beta}$ to be the intersection of the half spaces $H_{t}^-$ ($t := s_{\beta}$) and $H_{u_{i}}^+$ ($u_i := s_{\alpha_{i}}$) in $X^1$ corresponding to the roots $\beta$ and $\{ \alpha_i  \mid \alpha_i \in \Phi(x) \setminus \{ \beta \} \}$ (per the characterisation of $\partial T(x^{-1})_{\beta}$ in \Cref{witnesses_are_convex}).
    \par
    It suffices to show that for each $y_0, y_n \in \partial T(x^{-1})_{\beta}$ there is $y \in \partial T(x^{-1})_{\beta}$ with $y_0 \succeq y \preceq y_n$. Let $\pi = (y_0, y_1 , \ldots , y_n)$ be a geodesic edge path between $y_0$ and $y_n$. By \Cref{witnesses_are_convex}, each $y_i \in \partial T(x^{-1})_{\beta}$ for $0 \le i \le n$.
    \par
    Our aim is to modify $\pi$ to obtain another embedded edge path between $y_0$ and $y_n$ (not necessarily geodesic) so that there is no $y_{i-1} \prec y_i \succ y_{i+1}$. Now suppose there is $y_{i-1} \prec y_i \succ y_{i+1}$. Let $H_r, H_q$ be the walls separating $y_{i-1}$ from $y_i$ and $y_i$ from $y_{i+1}$ respectively and let $\alpha_r, \alpha_q$ be the corresponding roots. Note that $\alpha_r \neq \beta$ and $\alpha_q \neq \beta$ since by convexity $y_{i-1}, y_i, y_{i+1} \in \partial T(x^{-1})_{\beta}$. Also, $y_{i-1}, y_{i+1}$ are elements of the \textit{residue} $R = \langle r, q \rangle (y_i)$ and $\alpha_r, \alpha_q \in \Phi^R(y_i)$. If $r$ and $q$ do not commute then $r,q$ are sharp angled and the identity $e$ is in a geometric fundamental domain for the finite dihedral reflection subgroup $W_{\langle r, q \rangle}$ generated by the reflections $r,q$. We claim that all the elements of the residue $R = \langle r, q \rangle (y_i)$ lie in $\partial T(x^{-1})_{\beta}$. 
    \par
    Since $y_i \in \partial T(x^{-1})_{\beta}$ this implies that $\alpha_r, \alpha_q \notin \Phi(x) \setminus \{ \beta \}$. We then need to show that $\alpha' \notin \Phi(x) \setminus \{ \beta \}$ for any $\alpha' \in \Phi^{+}_{\langle r, q \rangle}$. Hence we can assume that $r$ and $q$ do not commute, otherwise there is no other root in $\Phi^{+}_{\langle r, q \rangle}$. If there is $\alpha' \in \Phi^{+}_{\langle r, q \rangle} \cap \big( \Phi(x) \setminus \{ \beta \} \big)$ then since $\alpha' = k \alpha_r + j \alpha_q$ for some $k,j \in \mathbb{R}_{> 0}$ this implies that either $\alpha_r \in \Phi(x) \cap \Phi(y_i)$ or $\alpha_q \in \Phi(x) \cap \Phi(y_i)$. Since $\beta \in \Phi(x) \cap \Phi(y_i)$, this then implies that $|\Phi(x) \cap \Phi(y_i)| > 1$, a contradiction. 
    \par
    Now modify $\pi$ and replace the subpath $(y_{i-1}, y_i, y_{i+1})$ with the other embedded edge-path in the residue $R$ from $y_{i-1}$ to $y_{i+1}$. By \cite[Theorem 2.9]{Ronan_2009} all the elements of $R$ are $\preceq y_i$, since the element Proj$_{R}(e)$ of $R$ is opposite to $y_i$. It then follows by  \cite[Theorem 2.15]{Ronan_2009} that all elements of $R$ lie on a geodesic edge path from Proj$_{R}(e)$ to $y_i$ and hence on a geodesic edge path from $e$ to $y_i$ through Proj$_{R}(e)$. Thus this path modification decreases the complexity of $\pi$ (under lexicographic order); considering $\pi$ as the tuple $\Pi = (n_L, \ldots , n_2, n_1)$, where $n_j$ is the number of elements $p_i$ in $\pi$ with $\ell(p_i) = j$ and $L = max_{i = 0}^{n} \ell(p_i)$. 
    \par
    After finitely many such modifications we obtain the desired path. The fact that the minimal element $y_{\beta}$ is necessarily a tight gate follows by \Cref{minimal_witness} and convexity follows by \Cref{witnesses_are_convex}.
\end{proof}

\begin{corollarySec} \label{cor:same_witness_for_all_elements}
    For each $T \in \mathbb{T}$ and $\beta \in \partial T$, there is a unique minimal length element $y_{\beta} \in W$ such that 
    \begin{equation*}
        \Phi(x) \cap \Phi(y_{\beta}) = \{\beta \}
    \end{equation*}
    for all $x \in Q(T)$
\end{corollarySec}
\begin{proof}
    Let $u,v \in Q(T)$ and $\beta \in \partial T$. By \Cref{thm:witnesses_are_gated} there are unique minimal length elements $u', v'$ such that $\Phi(u) \cap \Phi(u') = \{\beta \}$ and $\Phi(v) \cap \Phi(v') = \{\beta \}$. Let $g_T$ be the minimal element of $Q(T)$. Then $g_T \preceq u$ and $g_T \preceq v$ and $\Phi(g_T) \cap \Phi(u') = \{\beta \}$ and $\Phi(g_T) \cap \Phi(v') = \{\beta \}$. If $u' \neq v'$ then applying \Cref{thm:witnesses_are_gated} to $g_T$, there is $z \in W$ with $z \prec u'$ and $z \prec v'$ such that $\Phi(g_T) \cap \Phi(z) = \{\beta \}$. But then also $\Phi(u) \cap \Phi(z) = \{\beta \}$ and $\Phi(v) \cap \Phi(z) = \{\beta \}$ contradicting the minimality of $u'$ and $v'$. Hence we must have $u' = v'$.
\end{proof}

As a consequence of \Cref{thm:witnesses_are_gated} we also obtain the following properties for $\partial T_{\beta}$.

\begin{corollarySec} \label{cor:properties_of_witnesses_conetype_level}
    For each $T \in \mathbb{T}$ and $\beta \in \partial T$, the following properties hold:
    \begin{enumerate}[(i)]
        \item $\partial T_{\beta} \neq \emptyset$.
        \item $\partial T_{\beta} = \bigcap_{y \in Q(T)} \partial T(y^{-1})_{\beta}$.
        \item $\partial T_{\beta}$ is convex and gated.       
    \end{enumerate}
\end{corollarySec}
\begin{proof}
    \begin{enumerate}[(i)]
        \item This directly follows from \Cref{cor:same_witness_for_all_elements}.
        \item Follows by definition of $\partial T_{\beta}$.
        \item This follows by (i) and (ii) and by the fact that each $\partial T(y^{-1})_{\beta}$ is convex. Moreover, by \Cref{cor:same_witness_for_all_elements} the gate $y_{\beta}$ is independent of $y \in Q(T)$, so the intersection remains gated with gate $y_{\beta}$.
    \end{enumerate}
\end{proof}

The results of this section show that for any cone type $T$ and any boundary root $\beta \in \partial T$, there exists a unique tight gate $y_\beta$ of minimal length such that $\Phi(x)\cap\Phi(y_\beta)=\{\beta\}$ for all $x\in Q(T)$.
Moreover, by symmetry, there exists a unique tight gate $x_\beta \preceq x$ of minimal length satisfying $\Phi(x_\beta)\cap\Phi(y_\beta)=\{\beta\}$. Thus each positive root $\beta$ that arises as a boundary root of some cone type
canonically determines an unordered pair $\{x_\beta,y_\beta\}$ of tight gates associated to the root $\beta$.

\begin{definition} \label{def:super_elementary_roots}
    A root $\beta \in \Phi^+$ is \textit{super-elementary} if there exists $x,y \in W$ with $\Phi(x) \cap \Phi(y) = \{ \beta \}$. We denote the set of super-elementary roots by $\mathcal{S}$.
\end{definition}

By the finiteness of the set $\mathbb{T}$, the set $\mathcal{S}$ is finite for every finitely generated Coxeter group. The following example demonstrates that there are potentially multiple pairs of minimal-length tight gates $x,y$ with $\Phi(x) \cap \Phi(y) = \{ \beta \}$ and $\Phi^R(x) = \Phi^R(y) = \{ \beta \}$ for a super-elementary root $\beta$.

\begin{example} \label{ex:multiple_pairs_of_tight_gates_for_boundary_root}
    Let $W$ be the rank $4$ compact hyperbolic Coxeter group whose graph is given by
    \begin{figure}[H]
        \centering
        \begin{tikzpicture}
    \node (s) at (0, 1.5) {$s$};
    \node (t) at (1.5, 1.5) {$t$};
    \node (u) at (1.5, 0) {$u$};
    \node (v) at (0, 0) {$v$};
    
    \draw (s) -- (t);
    \draw (t) -- (u);
    \draw (u) -- (v) node[midway, above] {4};
    \draw (v) -- (s);
\end{tikzpicture}
    \end{figure}

    Let $\beta = \alpha_t + \sqrt{2}\alpha_u + 2\alpha_v$. Denote the elements $a = tvutv, b = uvutv, c = utvutv, d = vutv$. Then these elements are all tight gates with $\Phi^R(a) = \Phi^R(b) = \Phi^R(c) = \Phi^R(d) = \{ \beta \}$ and we have
    \begin{align*}
        &\ \Phi(a) \cap \Phi(b) = \{ \beta \} \\
        &\ \Phi(c) \cap \Phi(d) = \{ \beta \} 
    \end{align*}
    But $d$ is not a witness of $\beta$ with respect to $a$ and $b$ is not a witness of $\beta$ with respect to $c$.   
\end{example}

In subsequent work, we investigate classes of Coxeter groups in which every
super-elementary root $\beta \in \mathcal{S}$ admits a unique pair of tight
gates, up to order, whose inversion sets intersect precisely in $\{\beta\}$.

The following result implies that the set of tight gates $\Gamma^0$ is closed under suffix and gives more precise information about the relationship between tight gates and super-elementary roots.

\begin{proposition} \label{suffix_closure_of_tight_gates}
    Let $x \in \Gamma$ and $\Phi(x) \cap \Phi(y) = \{ \beta \}$ and $\Phi^R(y) = \{ \beta \}$ where $y$ is of minimal length with this property. If $s \in D_L(y)$ then 
    \begin{equation*}
        \Phi(sx) \cap \Phi(sy) = \{ s\beta \}
    \end{equation*}
    and $sy \in \Gamma^0$.
\end{proposition}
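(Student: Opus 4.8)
The plan is to derive the statement from three ingredients: the membership $y \in \Gamma^0$ coming from minimality of $y$; the suffix-closure of the Garside shadow $\Gamma$ together with \Cref{tight_elements_are_closed_under_suffix}; and a direct inversion-set computation of the same flavour as in the proofs of \Cref{boundary_roots_of_ws_from_boundaryroots_of_w} and \Cref{witness_evolution}.

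First I would record what minimality of $y$ gives. Since $y$ is of minimal length with $\Phi(x) \cap \Phi(y) = \{\beta\}$, \Cref{thm:witnesses_are_gated} (equivalently \Cref{minimal_witness}) shows $y \in \Gamma^0$; in particular $D_R(y) = \{r\}$ for a single $r \in S$ with $\beta = -y\alpha_r$. We may assume $\ell(y) > 1$: if $\ell(y) = 1$ then $y = s_\beta \in S$, which forces $\beta = \alpha_s$ to be simple, $s = s_\beta$ and $sy = e$, a degenerate situation the statement tacitly excludes. Once $\ell(y) > 1$, $y$ is not a simple reflection, so $\beta$ is non-simple; in particular $\beta \neq \alpha_s$, and hence $s\beta \in \Phi^+$.

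Second, $sy \in \Gamma^0$ is then almost immediate. As $\Gamma$ is a Garside shadow it is closed under suffixes, so $sy \in \Gamma$ because $s \in D_L(y)$. Applying \Cref{tight_elements_are_closed_under_suffix} to $y$ (legitimate since $\ell(y) > 1$ and $|\Phi^R(y)| = 1$) gives $D_R(sy) = D_R(y) = \{r\}$, hence $\Phi^R(sy) = \{-sy\alpha_r\} = \{s\beta\}$ and $|\Phi^R(sy)| = 1$; therefore $sy \in \Gamma^0$.

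The remaining point is the identity $\Phi(sx) \cap \Phi(sy) = \{s\beta\}$, and the only genuine step — the place where the hypotheses interact — is to show $s \notin D_L(x)$: if $s \in D_L(x)$ then $\alpha_s \in \Phi(x)$, and $s \in D_L(y)$ gives $\alpha_s \in \Phi(y)$, so $\alpha_s \in \Phi(x) \cap \Phi(y) = \{\beta\}$, contradicting that $\beta$ is non-simple. Hence $\ell(sx) = \ell(x) + 1$ and $\Phi(sx) = \{\alpha_s\} \sqcup s\Phi(x)$, while $s \in D_L(y)$ gives $\Phi(sy) = s\big(\Phi(y) \setminus \{\alpha_s\}\big)$. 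Since $s\alpha_s = -\alpha_s$ is negative, $\alpha_s \notin \Phi(sy)$, so intersecting and using bijectivity of the $s$-action yields
$$\Phi(sx) \cap \Phi(sy) = s\Phi(x) \cap s\big(\Phi(y) \setminus \{\alpha_s\}\big) = s\big((\Phi(x) \cap \Phi(y)) \setminus \{\alpha_s\}\big) = s(\{\beta\}) = \{s\beta\}.$$
I expect no serious obstacle here: the only thing requiring care is the implicit hypothesis $\ell(y) > 1$ and the resulting non-simplicity of $\beta$ (used both to rule out $s \in D_L(x)$ and to guarantee $s\beta > 0$); everything else is the standard behaviour of inversion sets under left multiplication by a simple reflection.
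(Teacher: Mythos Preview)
Your proof is correct, but it takes a genuinely different route from the paper for the conclusion $sy \in \Gamma^0$.

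You deduce $sy \in \Gamma^0$ from already-established facts: $y \in \Gamma^0$ by \Cref{thm:witnesses_are_gated}, then $sy \in \Gamma$ by suffix-closure of the Garside shadow $\Gamma$, and finally $|\Phi^R(sy)| = 1$ by \Cref{tight_elements_are_closed_under_suffix}. The paper instead argues directly from the minimality of $y$: assuming there were a second root $\alpha \neq s\beta$ in $\Phi^R(sy)$, it constructs the element $s s_\alpha s y \prec y$ and checks that $\Phi(x) \cap \Phi(s s_\alpha s y) = \{\beta\}$, contradicting the minimal length of $y$. Note the sentence immediately preceding the proposition: the author presents it as an \emph{independent} route to suffix-closure of $\Gamma^0$ (``The following result also implies that the set of tight gates $\Gamma^0$ is closed under suffix\ldots''), so the paper's argument is deliberately self-contained and does not invoke \Cref{tightgates_closed_under_suffix}. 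Your approach is shorter and perfectly valid given the logical order of the paper, but it forfeits this independence; the paper's approach buys a second, minimality-based proof of suffix-closure and makes explicit how the witness structure propagates under stripping a left descent.

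For the identity $\Phi(sx) \cap \Phi(sy) = \{s\beta\}$ the two arguments coincide. Your handling of the degenerate case $\ell(y)=1$ and the claim that $\beta$ is non-simple when $\ell(y)>1$ are correct (indeed, if $\beta = \alpha_t$ were simple with $t \in D_L(y)$ and $D_R(y)=\{r\}$, then $ty = yr$ forces $r \notin D_R(ty)$, contradicting \Cref{tight_elements_are_closed_under_suffix}); the paper leaves this case implicit.
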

\begin{proof}
Let $x, y \in W$ possess the properties as stated. Since $\alpha_s \in \Phi(y)$ it follows that $\alpha_s \notin \Phi(x)$. Denote $v = sx$. It then follows that $\Phi(v) = \{ \alpha_s \} \sqcup s\Phi(x)$ and $\Phi(sy) = s\Phi(y) \setminus \{ \alpha_s \}$. Therefore $\Phi(v) \cap \Phi(sy) = \{ s\beta \}$ and it is clear that $s\beta \in \Phi^R(sy)$. Now suppose there is $\alpha \neq s\beta \in \Phi^R(sy)$. Then we have $s_{\alpha} sy \preceq sy$ and $\alpha_s \notin \Phi(s_{\alpha}sy) \cap \Phi(sy)$. Thus $ss_{\alpha}sy \preceq y$ with $\ell(s_{\alpha}sy) = \ell(sy) -1 = \ell(y) - 2$ and $s\beta \in \Phi(s_{\alpha}sy)$. Then $\Phi(ss_{\alpha}sy) = \{ \alpha_s \} \sqcup \Phi(s_{\alpha}sy)$ and $\beta \in \Phi(ss_{\alpha}sy)$. This then implies that 
\begin{equation*}
    \Phi(x) \cap \Phi(ss_{\alpha}sy) = \{ \beta \}
\end{equation*}
and $\ell(ss_{\alpha}sy) = \ell(y) -1$ contradicting the minimality of $y$.
\end{proof}

The following two additional results demonstrate how boundary roots and the set of witnesses can be iteratively computed.

\begin{lemma}\cite[Lemma 3.3.4]{Yau21} \label{boundary_roots_of_ws_from_boundaryroots_of_w}
Let $w \in W$ and $s \in S$ where $s \notin D_L(w)$. Then
\begin{equation*}
    \partial T (w^{-1}s) = \{ \alpha_s \} \sqcup s \big( \{ \alpha \in \partial T(w^{-1}) \mid \exists \hspace{0.1cm} x \in \partial T(w^{-1})_{\alpha} \mbox{ with } s \in D_L(x) \} \big)
\end{equation*}
\end{lemma}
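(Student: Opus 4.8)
The plan is to reduce the statement to the characterisation of boundary roots in \Cref{thm:boundary_roots_definition}. Since $w^{-1}s = (sw)^{-1}$ and $s \notin D_L(w)$ gives $\ell(sw) = \ell(w)+1$, that result applied with $x = sw$ says: a root $\beta$ lies in $\partial T(w^{-1}s)$ if and only if there is $y \in W$ with $\Phi(sw)\cap\Phi(y) = \{\beta\}$. The structural input I would record first is the standard decomposition of inversion sets under left multiplication by a simple generator: because $\ell(sw) = \ell(w)+1$,
\begin{equation*}
    \Phi(sw) = \{\alpha_s\} \sqcup s\,\Phi(w), \qquad \alpha_s \notin \Phi(w).
\end{equation*}
Since every element of $\partial T(w^{-1}s)$ lies in $\Phi(sw)$, each boundary root of $T(w^{-1}s)$ is either $\alpha_s$ or of the form $s\alpha$ with $\alpha \in \Phi(w)$; and $s\alpha = \alpha_s$ is impossible (it would force $\alpha = -\alpha_s < 0$), so these alternatives are mutually exclusive. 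This already explains the disjoint-union shape of the asserted formula, and it reduces the problem to deciding, for each $\alpha \in \Phi(w)$, whether $s\alpha$ is a boundary root.

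For $\beta = \alpha_s$ there is nothing to do: taking $y = s$, one has $\Phi(s) = \{\alpha_s\}$ and $\alpha_s \notin s\,\Phi(w)$, so $\Phi(sw)\cap\Phi(s) = \{\alpha_s\}$ and $\alpha_s \in \partial T(w^{-1}s)$ unconditionally. For $\beta = s\alpha$ with $\alpha \in \Phi(w)$, the key step is to exhibit a bijection
\begin{equation*}
    \{\, y \in W \mid \Phi(sw)\cap\Phi(y) = \{s\alpha\} \,\} \;\longleftrightarrow\; \{\, x \in W \mid s \in D_L(x),\ \Phi(w)\cap\Phi(x) = \{\alpha\} \,\}
\end{equation*}
given by $y \mapsto sy$ with inverse $x \mapsto sx$. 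To see it is well defined in the forward direction: if $y$ witnesses $s\alpha$ against $sw$ then $\alpha_s \in \Phi(sw)\setminus\{s\alpha\}$ forces $\alpha_s \notin \Phi(y)$, hence $s \notin D_L(y)$ and $\Phi(sy) = \{\alpha_s\}\sqcup s\,\Phi(y)$ with $s \in D_L(sy)$; intersecting with $\Phi(w)$, using $\alpha_s \notin \Phi(w)$ and the identity $s\bigl(s\Phi(w)\cap\Phi(y)\bigr) = \Phi(w)\cap s\,\Phi(y)$, collapses $\Phi(w)\cap\Phi(sy)$ to $\{\alpha\}$. The reverse direction is the same chain of equalities read backwards. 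Consequently $s\alpha \in \partial T(w^{-1}s)$ if and only if some $x$ with $s \in D_L(x)$ satisfies $\Phi(w)\cap\Phi(x) = \{\alpha\}$; by \Cref{thm:boundary_roots_definition} the bare existence of such an $x$ is precisely the condition $\alpha \in \partial T(w^{-1})$, and then the extra requirement is exactly that the witness set $\partial T(w^{-1})_\alpha$ meets $\{x : s \in D_L(x)\}$. Assembling the two cases yields the displayed formula.

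The only thing that needs care is the bookkeeping of intersections with $\Phi(w)$: the hypothesis $s \notin D_L(w)$ must be invoked at the right moments to guarantee $\alpha_s \notin \Phi(w)$, which is what makes the $\alpha_s$-terms disappear from every relevant intersection and keeps the bijection clean. There is no geometric difficulty here — the argument is a short manipulation of inversion sets once the decomposition $\Phi(sw) = \{\alpha_s\}\sqcup s\,\Phi(w)$ and \Cref{thm:boundary_roots_definition} are in place.
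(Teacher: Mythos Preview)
Your proof is correct and follows essentially the same approach as the paper's: both arguments use the decomposition $\Phi(sw) = \{\alpha_s\} \sqcup s\,\Phi(w)$ together with the witness characterisation of boundary roots (\Cref{thm:boundary_roots_definition} / \Cref{prop:boundary_roots_proposition}), and then compute $\Phi(w)\cap\Phi(sx)$ and $\Phi(sw)\cap\Phi(sx)$ exactly as you do. Your packaging of the two inclusions as a single bijection $y \leftrightarrow sy$ between witness sets is a slightly cleaner presentation, but the underlying manipulation of inversion sets is identical to the paper's.
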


\begin{proof}
We first show the left to right inclusion. First, $\Phi(sw) \cap \Phi(s) = \{ \alpha_s \}$ so $\alpha_s \in \partial T (w^{-1}s)$. If $\alpha \neq \alpha_s \in \partial T(w^{-1}s)$ then by \Cref{thm:boundary_roots_definition} there is $x\in W$ such that $\Phi(sw) \cap \Phi(x) = \{ \alpha \}$. We claim that $s\alpha \in \partial T(w^{-1})$ and that $sx$ is a witness of $s\alpha$ with respect to $w$. Since $\alpha_s \in \Phi(sw)$ we have $\alpha_s \notin \Phi(x)$ and hence $\Phi(sx) = \{ \alpha_s \} \sqcup s \Phi(x)$. Also, $\Phi(w) = s \big(\Phi(sw) \setminus \{ \alpha_s \} \big)$. Therefore
\begin{align*}
    \Phi(w) \cap \Phi(sx) =& \ s \big(\Phi(sw) \setminus \{ \alpha_s \} \big) \cap \big(  \{ \alpha_s \} \sqcup s \Phi(x) \big)\\
    =& \ s \big( \Phi(sw) \cap \Phi(x) \big) \\
    =& \ s(\{ \alpha \})
\end{align*}
Thus the claim follows. We now show the right to left inclusion. For a root $\alpha \in \partial T(w^{-1})$ where there is $x \in W$ with $\Phi(w) \cap \Phi(x) = \{ \alpha \} \mbox{ and } s \in D_L(x)$ we aim to show that $\Phi(sw) \cap \Phi(sx) = \{ s\alpha \}$. It then follows by \Cref{thm:boundary_roots_definition} again that $s\alpha$ is a boundary root of $T(w^{-1}s)$. Since $\ell(sw) > \ell(w)$ we have that $\Phi(sw) = \{ \alpha_s \} \sqcup s\Phi(w)$. If $x \in W$ is such that $s \in D_L(x)$ then $\Phi(sx) = s \big( \Phi(x) \setminus \{ \alpha_s \} \big)$. Hence we have
\begin{equation*}
    \Phi(sw) \cap \Phi(sx) = \big( \{ \alpha_s \} \sqcup s\Phi(w) \big) \cap s \big( \Phi(x) \setminus \{ \alpha_s \} \big)
\end{equation*}
Since $\Phi(w) \cap \Phi(x) = \{ \alpha \}$ by the formula for $\Phi(sw) \cap \Phi(sx)$ above it follows that $s\alpha \in \Phi(sw) \cap \Phi(sx)$. We now show that $\Phi(sw) \cap \Phi(sx) = \{ s\alpha \}$. If $ \beta \in \Phi(sw) \cap \Phi(sx)$ then $\beta \in s\Phi(w) \cap s\Phi(x) = s \big(\Phi(w) \cap \Phi(x) \big) = s( \{ \alpha \} )$. So $\beta = s\alpha$ as required.
\end{proof}

The next result shows how the set $\partial T(sx^{-1})_{s\beta}$ is obtained from $\partial T(x^{-1})_{\beta}$ if $\beta \in \partial T(x^{-1})$ and $s\beta \in \partial T(sx^{-1})$.

\begin{corollarySec} \label{witness_evolution}
    Let $x \in W$, $s \in S$ with $s \notin D_L(x)$. Let $y = sx$. If $\beta \in \partial T(x^{-1})$ and $s\beta \in \partial T(y^{-1})$ then 
    \begin{equation*}
        \partial T(y^{-1})_{s\beta} = \{ w \in \partial T(x^{-1})_{\beta} \mid s \in D_L(w) \}
    \end{equation*}
\end{corollarySec}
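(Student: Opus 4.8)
The plan is to obtain \Cref{witness_evolution} as the root-by-root refinement of \Cref{boundary_roots_of_ws_from_boundaryroots_of_w}: that lemma transports the \emph{whole} boundary $\partial T(y^{-1})$ back to $\partial T(x^{-1})$, and its proof already pairs up a witness of each boundary root of $y^{-1}$ with a witness of the corresponding boundary root of $x^{-1}$. I would isolate this pairing at the single root $\beta$ and verify that it sets up a correspondence between $\partial T(y^{-1})_{s\beta}$ and $\{w \in \partial T(x^{-1})_{\beta} \mid s \in D_L(w)\}$. The only inputs needed are the inversion-set evolution $\Phi(sx) = \{\alpha_s\} \sqcup s\Phi(x)$, valid since $s \notin D_L(x)$, together with \Cref{prop:boundary_roots_proposition} to pass between witnesses and membership in the boundary. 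First I would record two bookkeeping facts: $\alpha_s \in \Phi(sx)$ because $s \in D_L(sx)$, and $s\beta$ is a positive root (as $s\beta \in \partial T(y^{-1})$), so $s\beta \neq \alpha_s$.

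For the first direction, take any $w$ with $\Phi(sx) \cap \Phi(w) = \{s\beta\}$. Since $\alpha_s \in \Phi(sx)$ but $\alpha_s \neq s\beta$, the intersection forces $\alpha_s \notin \Phi(w)$, hence $\ell(sw) > \ell(w)$ and $\Phi(sw) = \{\alpha_s\} \sqcup s\Phi(w)$. Writing $\Phi(x) = s\big(\Phi(sx) \setminus \{\alpha_s\}\big)$ and intersecting with $\Phi(sw)$, the $\{\alpha_s\}$ terms drop out (since $\alpha_s$ lies in neither $\Phi(x)$ nor $s\Phi(w)$) and $s$ pulls through the intersection, giving
\begin{equation*}
\Phi(x) \cap \Phi(sw) = s\big( (\Phi(sx)\setminus\{\alpha_s\}) \cap \Phi(w) \big) = s(\{s\beta\}) = \{\beta\}.
\end{equation*}
Thus $sw \in \partial T(x^{-1})_{\beta}$ with $s \in D_L(sw)$; this is the element of $\{w \in \partial T(x^{-1})_{\beta} \mid s \in D_L(w)\}$ attached to $w$, realized by left multiplication by $s$.

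For the reverse direction, start from $u \in \partial T(x^{-1})_{\beta}$ with $s \in D_L(u)$, so $\Phi(su) = s\big(\Phi(u) \setminus \{\alpha_s\}\big)$, and run the same computation with the roles of $x$ and $sx$ interchanged to obtain $\Phi(sx) \cap \Phi(su) = \{s\beta\}$; \Cref{prop:boundary_roots_proposition}(ii) then confirms $s\beta \in \partial T(y^{-1})$ and that $su$ is a witness. Because $w \mapsto sw$ and $u \mapsto su$ are mutually inverse, this identifies the two witness sets of \Cref{witness_evolution} through the reflection $s$. The step I expect to be the main obstacle is verifying that each intersection genuinely collapses to a single root rather than acquiring spurious elements: this hinges on the disjointness $\alpha_s \notin \Phi(w)$ (respectively $\alpha_s \in \Phi(u)$) and on $s$ permuting $\Phi^+ \setminus \{\alpha_s\}$, so that the lone $\{\alpha_s\}$ term created or destroyed by the evolution formula is exactly accounted for — and it is precisely this bookkeeping of $\alpha_s$ that ties $\partial T(y^{-1})_{s\beta}$ to the witnesses of $\beta$ with respect to $x$ having $s$ as a left descent.
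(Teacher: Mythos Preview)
Your proposal is correct and follows essentially the same approach as the paper's proof: both directions are obtained by the inversion-set calculations already carried out in \Cref{boundary_roots_of_ws_from_boundaryroots_of_w}, and the paper likewise defers the reverse inclusion to ``the second half of the proof of \Cref{boundary_roots_of_ws_from_boundaryroots_of_w}''. Your explicit remark that the correspondence is realised by left multiplication by $s$ (so that $w \leftrightarrow sw$) is exactly the mechanism underlying the paper's argument as well.
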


\begin{proof}
    If $v \in \partial T(y^{-1})_{s\beta} $ then $\Phi(y) \cap \Phi(v) = \{ s\beta \}$. Since $\alpha_s \in \Phi(y)$ this implies that $\alpha_s \notin \Phi(v)$ and so $\Phi(sv) = \{ \alpha_s \} \sqcup s\Phi(v)$. Since $\Phi(x) = s\Phi(y) \setminus \{ \alpha_s \}$ we have
    \begin{equation*}
        \Phi(x) \cap \Phi(sv) = \big( s\Phi(y) \setminus \{ \alpha_s \} \big) \cap \big( \{ \alpha_s \} \sqcup s\Phi(v) \big)
    \end{equation*}
    Therefore, as in the proof of \Cref{boundary_roots_of_ws_from_boundaryroots_of_w}, we have $\Phi(x) \cap \Phi(sv) = \{ \beta \}$.
    The reverse inclusion directly follows from the second half of the proof of \Cref{boundary_roots_of_ws_from_boundaryroots_of_w}.
\end{proof}

\section{The $\mathscr{T}^0$ partition and join-irreducible elements of $\Gamma$} \label{sec:Tight gates and Join-irreducible elements of  Gamma}

We begin this section by describing a natural join representation for elements in Coxeter groups, similar to that of Reading and Speyer's \textit{canonical join representation} (see \Cref{sec:canonical_join_representations} for further discussion). We use the join representation studied here to give additional properties of the tight gates $\Gamma^0$ of $\mathscr{T}$ and later use this join representation to give an alternative proof of Reading and Speyer's result.

We then show that the partition induced by the tight gates $\Gamma^0$ produces the cone type arrangement $\mathscr{T}$. We also show that the set $\Gamma^0$ is the set of join-irreducible elements of the poset $\Gamma$ under the right weak order. In particular, it follows from \cite[Theorem 1.4]{ALCO_2024__7_6_1879_0} that the set $\Gamma^0$ are the most fundamental elements of the smallest Garside shadow of $W$.

\begin{lemma} \label{prefixes_are_convex}
    Let $x\in W$. Then the set of prefixes of $x$, $P(x) = \{z \preceq x \mid z \in W\}$ is convex.
\end{lemma}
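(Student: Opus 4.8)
The plan is to realise $P(x)$ as an intersection of half spaces and then apply \Cref{lem:convexity}. The starting point is the standard description of the right weak order by inversion sets: for $z,x \in W$ one has $z \preceq x$ if and only if $\Phi(z) \subseteq \Phi(x)$. (One inclusion is immediate from the reduced-word description of inversion sets, and the equivalence is well known; indeed it is already used implicitly in the proof of \Cref{prop:bijection_join_refine_phi1}, where the set of prefixes of $x$ satisfying an extra condition is identified with a corresponding intersection of half spaces.) I would also recall that, for $\alpha \in \Phi^+$ and $w \in W$, membership $w \in H_{\alpha}^{-}$ is by definition equivalent to $\ell(s_{\alpha}w) < \ell(w)$, i.e. to $\alpha \in \Phi(w)$; hence $w \in H_{\alpha}^{+}$ exactly when $\alpha \notin \Phi(w)$.

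Putting these together, I would write
\begin{align*}
    P(x) &= \{ z \in W \mid \Phi(z) \subseteq \Phi(x) \} \\
         &= \{ z \in W \mid \alpha \notin \Phi(z) \text{ for all } \alpha \in \Phi^+ \setminus \Phi(x) \} \\
         &= \bigcap_{\alpha \in \Phi^+ \setminus \Phi(x)} H_{\alpha}^{+}.
\end{align*}
This exhibits $P(x)$ as an intersection of half spaces, so \Cref{lem:convexity} immediately yields that $P(x)$ is convex.

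I do not expect a genuine obstacle here: the statement is a direct instance of the half-space characterisation of convexity, and all the ingredients are either standard or already in use in the paper. The only points needing a moment's care are the equivalence $z \preceq x \iff \Phi(z) \subseteq \Phi(x)$ and the identification $H_{\alpha}^{+} = \{ w \in W \mid \alpha \notin \Phi(w) \}$, both of which are routine.
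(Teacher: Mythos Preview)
Your proof is correct and follows exactly the same approach as the paper: both express $P(x)$ as $\bigcap_{\alpha \in \Phi^{+} \setminus \Phi(x)} H_{\alpha}^{+}$ and then invoke \Cref{lem:convexity}. The paper's version simply asserts this identity as ``clear'' in one line, whereas you have spelled out the intermediate steps via the equivalence $z \preceq x \iff \Phi(z) \subseteq \Phi(x)$.
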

\begin{proof}
    Clearly, we have $P(x) = \bigcap_{\Phi^{+} \setminus \Phi(x)} H_{\alpha}^{+}$. Hence the result follows by \Cref{lem:convexity}. 
\end{proof}

We first give a few preliminary results. The following result is an obvious, but rather useful fact.
\begin{lemma} \label{tight_prefixes}
    Let $x \in W$. If $\alpha \in \Phi(x)$ then there is $y \preceq x$ with $\Phi^R(y) = \{ \alpha \}$
\end{lemma}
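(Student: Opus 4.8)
The plan is to induct on $\ell(x)$, extracting a suitable generator from a reduced word for $x$ so that the resulting prefix has the right descent root equal to $\alpha$. Since $\alpha \in \Phi(x)$, by \Cref{inversion_set_is_determined_by_phi1} we have $\Phi(x) = cone_{\Phi}(\Phi^1(x))$, so $\alpha$ lies in the cone of the short inversions. The key observation is that if $\alpha \in \Phi^1(x)$, then by definition $\ell(s_\alpha x) = \ell(x) - 1$, so setting $y' = s_\alpha x$ we have $x = s_\alpha y' \cdot$ (no, rather) $x$ covers $y'$ in the appropriate order and $\alpha \in \Phi^R(\cdot)$ — more precisely, writing $z = s_\alpha x$ we want to pick out the element $w \preceq x$ obtained by removing the last letter corresponding to crossing the wall $H_\alpha$; such a $w$ satisfies $\Phi^R(w) \ni \alpha$, but we need $\Phi^R(w) = \{\alpha\}$ exactly.

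First I would handle the base case: if $\alpha \in \Phi^R(x)$ and $|\Phi^R(x)| = 1$, take $y = x$. Otherwise, the strategy is: pick a reduced word $x = s_1 \cdots s_n$ and consider the total order on $\Phi(x)$ from \Cref{prop:short_inversion_poset_determines_order}, namely $\alpha_{s_1} < s_1\alpha_{s_2} < \cdots < s_1\cdots s_{n-1}\alpha_{s_n}$. Each root $\beta_i := s_1\cdots s_{i-1}\alpha_{s_i}$ is the wall crossed by the $i$-th edge of the geodesic $(e, s_1, s_1 s_2, \ldots, x)$. If $\alpha = \beta_i$ for some $i$, then the prefix $y_i := s_1 \cdots s_i$ has $\alpha \in \Phi^R(y_i)$; indeed $\alpha = -y_i \alpha_{s_i}$ with $s_i \in D_R(y_i)$. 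Now $y_i$ may have other right descents, so I would then apply the inductive hypothesis-style argument within $y_i$: among all prefixes $w \preceq y_i$ with $\alpha \in \Phi^R(w)$, take one of minimal length; I claim such a minimal one has $\Phi^R(w) = \{\alpha\}$. If not, there is another $\gamma \in \Phi^R(w)$, $\gamma \neq \alpha$, with $s_\gamma w = ws'$ for some $s' \in D_R(w)$; then $ws' \preceq w$, the element $w' = ws'$ still has length $\ell(w) - 1$, and I would need $\alpha \in \Phi^R(w')$ or at least $\alpha \in \Phi(w')$ — this requires that removing the $s'$-descent does not remove the wall $H_\alpha$, which holds because $\alpha \neq \gamma$ and $H_\alpha$, $H_\gamma$ are distinct final walls. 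But $\alpha \in \Phi(w')$ only gives $\alpha \in \Phi(w')$, not $\alpha \in \Phi^R(w')$, so I would iterate: repeatedly remove right-descent letters other than the one crossing $H_\alpha$ until $\alpha$ becomes the unique right descent. The process terminates since length strictly decreases; it cannot remove $\alpha$ from the inversion set since at each stage we only delete a letter whose descent root differs from $\alpha$, and by the characterisation of reduced words one can always find a reduced word ending in the letter crossing $H_\alpha$ once $\alpha$ is the \emph{only} final wall left — and the point at which no other right descent exists is exactly when $\Phi^R = \{\alpha\}$.

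The remaining case is when $\alpha \in \Phi(x) \setminus \Phi^1(x)$. Here I would pick any $\beta \in \Phi^1(x)$ on the ``path toward'' $\alpha$ — concretely, since $\alpha = -x\alpha_{s}$ is not available, instead observe $\alpha \in \Phi(x) = cone_\Phi(\Phi^1(x))$, and more usefully, $s_\alpha \in H_\alpha$ separates $e$ from $x$, so along any geodesic from $e$ to $x$ some edge crosses $H_\alpha$; that edge occurs at position $i$ with $\alpha = \beta_i = s_1\cdots s_{i-1}\alpha_{s_i}$ for the corresponding reduced word. So in fact every $\alpha \in \Phi(x)$ arises as some $\beta_i$ for a suitable reduced word, which reduces everything to the case already treated above. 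Thus the two cases collapse: the real content is the termination/non-removal argument for the iterative descent-deletion. I expect the main obstacle to be verifying rigorously that deleting a right-descent letter $s' \neq$ (the $H_\alpha$-crossing letter) preserves $\alpha \in \Phi(w)$ and that the terminal element indeed satisfies $\Phi^R = \{\alpha\}$ rather than possibly overshooting; the cleanest way is a minimal-counterexample argument on $\ell(x)$, using \Cref{thm:short_inversion_sandwich} or \Cref{cor:phi1_of_join_isphi1_of_x} to control how $\Phi^R$ and $\Phi^1$ change under covering relations, but one should be careful that $\alpha$ need not be in $\Phi^1(w)$ at intermediate stages, only in $\Phi(w)$.
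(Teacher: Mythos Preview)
Your proposal is correct and follows essentially the same approach as the paper: find a prefix $y_i \preceq x$ with $\alpha \in \Phi^R(y_i)$ (which exists since $\alpha$ is crossed along any reduced word for $x$), then repeatedly strip off a right descent $\gamma \neq \alpha$; this keeps $\alpha$ in the inversion set and strictly decreases length, so eventually $\Phi^R = \{\alpha\}$. The paper states this in three lines, whereas you arrive at it after several unnecessary detours --- the case split on $\alpha \in \Phi^1(x)$ versus $\alpha \notin \Phi^1(x)$ is irrelevant (as you yourself note, every $\alpha \in \Phi(x)$ is some $\beta_i$), and the worry that $\alpha$ might leave $\Phi^1$ at intermediate stages never matters since the argument only uses $\alpha \in \Phi(w')$. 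One small point you could tighten: when you take $w$ of minimal length among prefixes with $\alpha \in \Phi^R(w)$ and then observe that $w' = s_\gamma w$ has $\alpha \in \Phi(w')$, the contradiction to minimality comes from reapplying the first step to $w'$ to produce a strictly shorter prefix with $\alpha$ as a right-descent root --- you gesture at this with ``I would iterate'' but never quite close the loop.
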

\begin{proof}
    Let $x_0$ be a prefix of $x$ with $\alpha \in \Phi^R(x_0)$. If there is $\beta \in \Phi^R(x_0)$ with $\beta \neq \alpha$ then consider the prefix $x_1 := s_{\beta}x_0$ of $x$. Repeating this process, since each step replaces $x_i$ by a shorter prefix $s_{\beta}x_i$, the process terminates and there must be some prefix $x_k$ of $x$ with $\Phi^R(x_k) = \{ \alpha \}$.
\end{proof}

\begin{proposition} \label{prop:x_is_join_of_phi1_tight_elements} 
	Let $x \in W$. Then 
	\begin{equation*}
		x = \bigvee \phi^1(x)
	\end{equation*}
	where $\phi^1(x) = \{ z \in W \mid z \preceq x \text{ with } \Phi^R(z) = \{ \alpha \} \text{ and } \alpha \in \Phi^1(x) \}$.
\end{proposition}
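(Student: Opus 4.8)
The plan is to prove the two weak-order inequalities $\bigvee \phi^1(x) \preceq x$ and $x \preceq \bigvee \phi^1(x)$ separately. The first is immediate from the definition of $\phi^1(x)$; the second will follow by comparing inversion sets, using \Cref{inversion_set_of_join} to compute the inversion set of a join and \Cref{inversion_set_is_determined_by_phi1} to recover $\Phi(x)$ from its base $\Phi^1(x)$. Recall also the standard fact that $u \preceq v$ in the weak order is equivalent to $\Phi(u) \subseteq \Phi(v)$.

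First I would dispose of the trivial case $x = e$, where $\Phi^1(x) = \emptyset$, hence $\phi^1(x) = \emptyset$ and $\bigvee \emptyset = e = x$; so assume $x \neq e$. Every $z \in \phi^1(x)$ satisfies $z \preceq x$, so $x$ is an upper bound of $\phi^1(x)$; since bounded subsets of the weak order admit joins, $j := \bigvee \phi^1(x)$ exists and $j \preceq x$. For the reverse direction, note that by \Cref{tight_prefixes} each short inversion $\alpha \in \Phi^1(x) \subseteq \Phi(x)$ is the unique right-descent root of some prefix $z \preceq x$, and such a $z$ lies in $\phi^1(x)$; in particular $\alpha \in \Phi^R(z) \subseteq \Phi(z) \subseteq \bigcup_{z' \in \phi^1(x)} \Phi(z')$. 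Thus $\Phi^1(x) \subseteq \bigcup_{z \in \phi^1(x)} \Phi(z)$, and applying $cone_{\Phi}(-)$ together with \Cref{inversion_set_of_join} gives
\begin{equation*}
    \Phi(x) = cone_{\Phi}(\Phi^1(x)) \subseteq cone_{\Phi}\Big( \bigcup_{z \in \phi^1(x)} \Phi(z) \Big) = \Phi(j),
\end{equation*}
where the first equality is \Cref{inversion_set_is_determined_by_phi1} and the middle inclusion is monotonicity of $cone_{\Phi}$. Hence $\Phi(x) \subseteq \Phi(j)$, so $x \preceq j$, and combined with $j \preceq x$ this yields $x = j = \bigvee \phi^1(x)$.

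There is no substantial obstacle here: the argument is essentially bookkeeping with the cited results. The one conceptual point, and the reason \Cref{tight_prefixes} is needed, is that the elements of $\phi^1(x)$ are constrained to have a single right-descent root lying in $\Phi^1(x)$, so one must verify that this restriction still leaves enough elements in $\phi^1(x)$ for their inversion sets to collectively contain all of $\Phi^1(x)$ — which is exactly what \Cref{tight_prefixes} provides.
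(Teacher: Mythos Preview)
Your proof is correct and follows essentially the same approach as the paper: establish $j := \bigvee \phi^1(x) \preceq x$ trivially, then show $\Phi(x) \subseteq \Phi(j)$ by combining \Cref{inversion_set_is_determined_by_phi1} with \Cref{inversion_set_of_join}. The only minor difference is that you explicitly invoke \Cref{tight_prefixes} to guarantee that each $\alpha \in \Phi^1(x)$ actually appears as $\Phi^R(z)$ for some $z \in \phi^1(x)$, whereas the paper leaves this step implicit (citing \Cref{inversion_set_is_determined_by_phi1} at the corresponding point); your version is arguably clearer on this detail.
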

\begin{proof}
	Clearly $x$ is an upper bound of $\phi^1(x)$. So by \cite[Theorem 3.2.1]{BB05} the join of $\phi^1(x)$ exists. Let $y = \bigvee \phi^1(x)$. Then $y \preceq x$ with $\Phi(y) \subseteq \Phi(x)$. By \Cref{inversion_set_of_join} we have
	\begin{equation*}
		\Phi(y) = cone_{\Phi}\big(\bigcup_{z \in \phi^1(x)} \Phi(z) \big) = cone_{\Phi}\big(\bigcup_{z \in \phi^1(x)} cone_{\Phi}\Phi^1(z) \big) = cone_{\Phi}\big( \bigcup_{z \in \phi^1(x)} \Phi^1(z) \big)
	\end{equation*}
	By definition of $\phi^1(x)$, if $\alpha \in \Phi^1(x)$ then $\alpha \in \Phi^R(z) \subseteq \Phi^1(z)$ for some $z \in \phi^1(x)$, so $\Phi^1(x) \subseteq \bigcup_{z \in \phi^1(x)} \Phi^1(z)$. Therefore,
	\begin{equation*}
		\Phi(x) = cone_{\Phi}\big( \Phi^1(x) \big) \subseteq cone_{\Phi}\big( \bigcup_{z \in X} \Phi^1(z) \big) = \Phi(y)
	\end{equation*}
	and $x \preceq y$. Hence $x = y$.
\end{proof}

We note that nothing in \Cref{prop:x_is_join_of_phi1_tight_elements} suggests that for each $\alpha \in \Phi^1(x)$ there cannot be more than one prefix $z$ of $x$ with $\Phi^R(z) = \{\alpha\}$. The following result shows that there is a bijection between the set $\phi^1(x) = \{ z \in W \mid z \preceq x \text{ with } \Phi^R(z) = \{ \alpha \} \text{ and } \alpha \in \Phi^1(x) \}$ and $\Phi^1(x)$.

\begin{theorem} \label{prop:bijection_join_refine_phi1}
    Let $x \in W$. For each $\beta \in \Phi^1(x)$ there is a unique minimal length $z \preceq x$ such that $\Phi^R(z) = \{ \beta \}$. Thus the map $\Phi_R: \phi^1(x) \rightarrow \Phi^1(x)$ defined by $z \mapsto \Phi^R(z)$ is a bijection.
\end{theorem}
\begin{proof}
    The proof is very similar to the proof of \Cref{thm:witnesses_are_gated}. Let $\beta \in \Phi^1(x)$ and denote $Z_{\beta} = \{z \in W \mid z \preceq x \text{ and } \beta \in \Phi(z)\}$. We show that for each $z_0, z_n \in Z_{\beta}$ there is $z \in Z_{\beta}$ such that $z_0 \succeq z  \preceq z_n$. Let $\pi = (z_0, z_1 , \ldots , z_n)$ be a geodesic edge path between $z_0$ and $z_n$. Note that in terms of half-spaces
    $$Z_{\beta} = \big( \bigcap_{\Phi^{+} \setminus \Phi(x)} H_{\alpha}^{+} \big) \cap H^{-}_{\beta} $$
    and hence by \Cref{lem:convexity} $Z_{\beta}$ is convex. Therefore, each $z_i \in \pi$ is also in $Z_{\beta}$. Thus $H_{\beta}$ does not separate any $z_i$ from $z_j$ in $\pi$ and each $z_i$ is a prefix of $x$. 
    \par
    Our aim is to modify $\pi$ to obtain another embedded edge path between $z_0$ and $z_n$ so that there is no $z_{i-1} \prec z_i \succ z_{i+1}$. Now suppose there is $z_{i-1} \prec z_i \succ z_{i+1}$. Let $H_r, H_q$ be the walls separating $z_{i-1}$ from $z_i$ and $z_i$ from $z_{i+1}$ respectively and let $\alpha_r, \alpha_q$ be the corresponding roots. Now $z_{i-1}, z_{i+1}$ are elements of the \textit{residue} $R = \langle r, q \rangle (z_i)$ and $\alpha_r, \alpha_q \in \Phi^R(z_i)$. Hence by \cite[Theorem 2.9]{Ronan_2009} we have $y \preceq z_i$ for all $y \in R$ and as a consequence we have that $y \preceq x$ for all $y \in R$. If $r$ and $q$ do not commute then $r,q$ are sharp angled and the identity $e$ is in a geometric fundamental domain for the finite dihedral reflection subgroup $W_{\langle r, q \rangle}$ generated by the reflections $r,q$. We claim that all the elements of the residue $R = \langle r, q \rangle (z_i)$ lie in $Z_{\beta}$. 
    \par
    We need to show that $ \beta \notin \Phi^{+}_{\langle r, q \rangle}$. If $r$ and $q$ commute then we are done since there is no other root in $\Phi^{+}_{\langle r, q \rangle}$, otherwise if $ \beta \in \Phi^{+}_{\langle r, q \rangle}$ then since $\beta = k \alpha_r + j \alpha_q$ for some $k,j \in \mathbb{R}_{> 0}$ and $\alpha_r, \alpha_q \in \Phi(z_i) \subseteq \Phi(x)$ this implies $\beta \in cone_{\Phi}(\Phi(x) \setminus \{\beta\})$, contradicting \Cref{inversion_set_is_determined_by_phi1}.
    \par
    We now modify $\pi$ and replace the subpath $(z_{i-1}, z_i, z_{i+1})$ with the other embedded edge-path in the residue $R$ from $z_{i-1}$ to $z_{i+1}$. This path modification decreases the complexity of $\pi$ (under lexicographic order); considering $\pi$ as the tuple $\Pi = (n_L, \ldots , n_2, n_1)$, where $n_j$ is the number of elements $z_i$ in $\pi$ with $\ell(z_i) = j$ and $L = max_{i = 0}^{n} \ell(z_i)$. 
    \par
    After finitely many such modifications we obtain the desired path. The fact that the minimal element $z$ necessarily has $\Phi^R(z) = \{ \beta \}$ follows by \Cref{tight_prefixes}.
\end{proof}

We briefly reall some additional definitions and terminology before proceeding further. Let $(L, \le)$ be a poset. A \textit{join representation} for $x \in L$ is an identity of the form $x = \bigvee U$ for some $U \subseteq L$. A join representation $x = \bigvee U$ is \textit{irredundant} if there does not exist a proper subset $U' \subset  U$ such that $x = \bigvee U'$.  
For subsets $U, V$ of $L$, denote $U << V$ if for every $u \in U$ there exists $v \in V$ such that $u \le v$. The relation $U << V$ is called a \textit{join-refinement} since if $U << V$ then $\bigvee U \le \bigvee V$. The next result shows that the set $\phi^1(x)$ \textit{join-refines} every join-representation $Y$ of $x$.

\begin{corollarySec} \label{join_refinements}
    Let $x \in W$ and suppose $x = \bigvee Y$ for $Y \subseteq W$. Then for every $z \in \phi^1(x)$ there is $y \in Y$ such that $z \preceq y$.
\end{corollarySec}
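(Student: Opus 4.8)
The plan is to deduce the statement directly from the fact that any join representation of $x$ must ``cover'' $\Phi^1(x)$ at the level of inversion sets, together with the precise description of the elements of $\phi^1(x)$ furnished by \Cref{prop:bijection_join_refine_phi1}.

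First I would fix $z \in \phi^1(x)$ and set $\Phi^R(z) = \{\beta\}$ with $\beta \in \Phi^1(x)$. Since $x = \bigvee Y$, the element $x$ is an upper bound for $Y$, so $y \preceq x$ for every $y \in Y$. By \Cref{inversion_set_of_join} we have $\Phi(x) = cone_{\Phi}\big( \bigcup_{y \in Y} \Phi(y) \big)$, and then \Cref{inversion_set_is_determined_by_phi1} forces $\Phi^1(x) \subseteq \bigcup_{y \in Y} \Phi(y)$. In particular there is $y_0 \in Y$ with $\beta \in \Phi(y_0)$, and since $y_0 \preceq x$ this puts $y_0$ in the set $Z_{\beta} = \{ v \in W \mid v \preceq x,\ \beta \in \Phi(v) \}$ considered in the proof of \Cref{prop:bijection_join_refine_phi1}.

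It then remains to see that $z \preceq y_0$, and for this I would observe that the proof of \Cref{prop:bijection_join_refine_phi1} in fact establishes the stronger statement that $z$ is the minimum element of $Z_{\beta}$ in the weak order. Indeed, that proof shows $Z_{\beta}$ is convex (it is an intersection of half-spaces, via \Cref{lem:convexity}) and that any two elements of $Z_{\beta}$ admit a common lower bound inside $Z_{\beta}$; since $Z_{\beta} \subseteq \{ v \mid v \preceq x \}$ is finite, it therefore has a unique $\preceq$-minimal element $z^{\ast}$, and together with this minimality \Cref{tight_prefixes} forces $\Phi^R(z^{\ast}) = \{\beta\}$. Hence $z^{\ast} \in \phi^1(x)$ with $\Phi^R(z^{\ast}) = \{\beta\}$, so $z^{\ast} = z$ by injectivity of the map $\Phi_R \colon \phi^1(x) \to \Phi^1(x)$ of \Cref{prop:bijection_join_refine_phi1}. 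Since $y_0 \in Z_{\beta}$ this gives $z = z^{\ast} \preceq y_0$ with $y_0 \in Y$, which is the desired conclusion.

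The main obstacle is precisely this last upgrade: \Cref{prop:bijection_join_refine_phi1} is stated only as ``$z$ is the unique minimal-\emph{length} prefix of $x$ with $\Phi^R = \{\beta\}$'', whereas we need $z$ to lie $\preceq$-below \emph{every} element of $Z_{\beta}$. If one prefers not to appeal to the internals of that proof, one can instead feed the pair $(z, y_0)$ into its path-surgery construction to obtain $w \in Z_{\beta}$ with $w \preceq z$ and $w \preceq y_0$, shrink $w$ by \Cref{tight_prefixes} to an element of $\phi^1(x)$ whose right-descent root is $\beta$, and use injectivity of $\Phi_R$ to identify it with $z$; then $z \preceq w \preceq y_0$. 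Everything else is immediate from the quoted propositions.
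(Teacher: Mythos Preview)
Your argument is correct, and the overall structure matches the paper's: use \Cref{inversion_set_of_join} and \Cref{inversion_set_is_determined_by_phi1} to locate $\beta$ in some $\Phi(y_0)$ with $y_0 \in Y$, then invoke \Cref{prop:bijection_join_refine_phi1} to conclude $z \preceq y_0$.

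The one genuine difference is in how the last step is executed, and the paper's route avoids precisely the obstacle you flag. You extract only $\beta \in \Phi(y_0)$ and are then forced to appeal to the \emph{proof} of \Cref{prop:bijection_join_refine_phi1} (that $z$ is the $\preceq$-minimum of $Z_\beta$, not merely the unique prefix with $\Phi^R = \{\beta\}$). The paper instead observes that
\[
cone_{\Phi}\Big(\bigcup_{y \in Y} \Phi(y)\Big) = cone_{\Phi}\Big(\bigcup_{y \in Y} \Phi^1(y)\Big),
\]
so \Cref{inversion_set_is_determined_by_phi1} in fact yields the stronger conclusion $\Phi^1(x) \subseteq \bigcup_{y \in Y} \Phi^1(y)$. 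This places $\beta \in \Phi^1(y_0)$, so \Cref{prop:bijection_join_refine_phi1} can be applied to $y_0$ as a black box: it produces some $z' \preceq y_0$ with $\Phi^R(z') = \{\beta\}$, and then the bijection $\Phi_R \colon \phi^1(x) \to \Phi^1(x)$ (applied to $x$) forces $z' = z$. This uses only the \emph{statement} of \Cref{prop:bijection_join_refine_phi1} twice, with no need to reopen its proof or invoke the path-surgery on the pair $(z,y_0)$.
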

\begin{proof}
    Since $x = \bigvee Y$, by \Cref{inversion_set_of_join} we have
    \begin{align*}
        \Phi(x) &= \ cone_{\Phi}\big( \bigcup_{y \in Y} \Phi(y) \big) \\ 
        &= \ cone_{\Phi}(\bigcup_{y \in Y} cone_{\Phi}(\Phi^1(y)) \\
        &= \ cone_{\Phi} \big(\bigcup_{y \in Y} \Phi^1(y) \big)
    \end{align*}
 Again by \Cref{inversion_set_is_determined_by_phi1} we have $\Phi^1(x) \subseteq \bigcup_{y \in Y} \Phi^1(y)$ and thus for $\alpha \in \Phi^1(x)$, we have $\alpha \in \Phi^1(y)$ for some $y \in Y$. Hence by \Cref{prop:bijection_join_refine_phi1} there is $z \preceq y \preceq x$ with $\Phi^R(z) = \{ \alpha \}$. Then again by \Cref{prop:bijection_join_refine_phi1}, $z$ is the unique element in $\phi^1(x)$ such that $\Phi^R(z) = \{ \alpha \}$ and the result follows.
\end{proof}

Our next result shows that every gate of $\mathscr{T}$ is a join of tight gates.

\begin{theoremSec} \label{thm:every_gate_is_a_join_of_tightgates}
    Let $x \in \Gamma$. Then 
    \begin{equation*}
       x  = \bigvee \partial t(x^{-1})
    \end{equation*}
    where $\partial t(x^{-1}) = \{ z \in W \mid z \preceq x \text{ with } \Phi^R(z) = \{ \alpha \} \text{ and } \alpha \in \partial T(x^{-1})\}$. Hence, every gate is a join of tight gates.
\end{theoremSec}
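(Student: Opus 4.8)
The plan is to sandwich the set $\partial t(x^{-1})$ between the two sets $\phi^R(x)=\{\,j_\beta\mid\beta\in\Phi^R(x)\,\}$ and $\phi^1(x)$ that already have join equal to $x$, namely those of \Cref{thm:Readings_them_for_all_sec} and \Cref{prop:x_is_join_of_phi1_tight_elements}, and then invoke monotonicity of joins in the weak order.

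First I would establish the chain of inclusions of root sets
\[
\Phi^R(x)\subseteq\partial T(x^{-1})\subseteq\Phi^1(x).
\]
The right-hand inclusion is immediate from \Cref{lem:boundary_roots_are_phi1}(2): if $\beta\in\partial T(x^{-1})$ then by \Cref{thm:boundary_roots_definition} there is $w\in W$ with $\Phi(x)\cap\Phi(w)=\{\beta\}$, whence $\beta\in\Phi^1(x)$. The left-hand inclusion is the only place the hypothesis $x\in\Gamma$ is used: by \Cref{thm:gates_characterised_by_phi0}, for each $\beta\in\Phi^R(x)$ there is $w\in W$ with $\Phi(x)\cap\Phi(w)=\{\beta\}$, and then $\beta\in\partial T(x^{-1})$ by \Cref{prop:boundary_roots_proposition}(ii) (equivalently \Cref{thm:boundary_roots_definition}).

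Next I would transfer these inclusions to the level of elements. By \Cref{prop:bijection_join_refine_phi1}, the map $z\mapsto\Phi^R(z)$ is a bijection from $\phi^1(x)$ onto $\Phi^1(x)$; by definition it carries $\phi^R(x)$ onto $\Phi^R(x)$ and $\partial t(x^{-1})$ onto $\partial T(x^{-1})$, and in particular $\partial t(x^{-1})\subseteq\phi^1(x)$ since $\partial T(x^{-1})\subseteq\Phi^1(x)$. Hence
\[
\phi^R(x)\subseteq\partial t(x^{-1})\subseteq\phi^1(x),
\]
and all three sets consist of prefixes of $x$, so each is bounded above by $x$; by \cite[Theorem 3.2.1]{BB05} all three joins exist and are $\preceq x$. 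Since any upper bound of a set is also an upper bound of any subset, join is monotone under inclusion, so using \Cref{thm:Readings_them_for_all_sec} and \Cref{prop:x_is_join_of_phi1_tight_elements},
\[
x=\bigvee\phi^R(x)\preceq\bigvee\partial t(x^{-1})\preceq\bigvee\phi^1(x)=x,
\]
which forces $x=\bigvee\partial t(x^{-1})$.

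Finally, for the closing assertion I would check that each $z\in\partial t(x^{-1})$ is a tight gate. By construction $\Phi^R(z)=\{\alpha\}$ for some $\alpha\in\partial T(x^{-1})$, so $|\Phi^R(z)|=1$; and since $\alpha\in\partial T(x^{-1})$ there is $w$ with $\Phi(x)\cap\Phi(w)=\{\alpha\}$, and because $z\preceq x$ with $\alpha\in\Phi(z)$ we get $\Phi(z)\cap\Phi(w)=\{\alpha\}$, so $z\in\Gamma$ by \Cref{thm:gates_characterised_by_phi0} and thus $z\in\Gamma^0$. The only genuinely delicate point — hence what I regard as the main (and only modest) obstacle — is the inclusion $\Phi^R(x)\subseteq\partial T(x^{-1})$, which fails for general $x$ and rests squarely on the standing assumption $x\in\Gamma$; once that is in hand, everything reduces to the bijection of \Cref{prop:bijection_join_refine_phi1} together with monotonicity of joins.
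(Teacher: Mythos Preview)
Your proof is correct but follows a genuinely different route from the paper's. The paper argues directly at the level of cone types: setting $y=\bigvee\partial t(x^{-1})$, it uses \Cref{cone_type_of_join} and \Cref{thm:cone_type_formulas} to compute $T(y^{-1})=\bigcap_{\Phi(\partial t(x^{-1}))}H_\beta^+$, then observes that $\partial T(x^{-1})\subseteq\Phi(\partial t(x^{-1}))\subseteq\Phi(x)$ forces $T(y^{-1})=T(x^{-1})$, whence $y=x$ since $x$ is a gate. Your argument instead sandwiches $\partial t(x^{-1})$ between $\phi^R(x)$ and $\phi^1(x)$ and appeals to the already-established identities $x=\bigvee\phi^R(x)=\bigvee\phi^1(x)$ from \Cref{thm:Readings_them_for_all_sec} and \Cref{prop:x_is_join_of_phi1_tight_elements}, using only monotonicity of joins. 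Your approach is shorter and highlights that the theorem is essentially a formal consequence of the canonical join representation once one knows $\Phi^R(x)\subseteq\partial T(x^{-1})$ for gates; the paper's approach is more self-contained within the cone-type framework and does not need to invoke \Cref{thm:Readings_them_for_all_sec}. Both proofs verify that the elements of $\partial t(x^{-1})$ are tight gates in the same way, via \Cref{thm:gates_characterised_by_phi0}.
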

\begin{proof}
    Let $x \in \Gamma$. By \Cref{tight_prefixes}, for each $\alpha \in \partial T(x^{-1})$, there is $z_{\alpha} \preceq x$ with $\Phi^R(z_{\alpha}) = \{ \alpha \}$. Since $\alpha \in \partial T(x^{-1})$, by \Cref{thm:boundary_roots_definition} and \Cref{minimal_witness} there exists $w_{\alpha} \in \Gamma^0$ such that $\Phi(x) \cap \Phi(w_{\alpha}) = \{ \alpha \}$. Therefore, we also have $\Phi(z_{\alpha}) \cap \Phi(w_{\alpha}) = \{ \alpha \}$. By \Cref{thm:gates_characterised_by_phi0} it follows that $z_{\alpha} \in \Gamma^0$ and $x$ is an upper bound of 
    \begin{equation*} \label{tight_gate_prefixes}
    \partial t(x^{-1}) := \{ z \in W \mid z \preceq x \text{ with } \Phi^R(z) = \{ \alpha \} \text{ and } \alpha \in \partial T(x^{-1})\} \subset \Gamma^0
    \end{equation*}
    Now let $y = \bigvee \partial t(x^{-1}) $ and let $\Phi(\partial t(x^{-1}) ) := \bigcup_{z \in \partial t(x^{-1}) } \Phi(z)$. By \Cref{cone_type_of_join} and \Cref{thm:cone_type_formulas} we have 
    \begin{equation*}
    T(y^{-1}) = \bigcap_{z \in \partial t(x^{-1}) } T(z^{-1}) = \bigcap_{\Phi(\partial t(x^{-1}) )} H_{\beta}^+   
    \end{equation*}
    Since $y \preceq x$ and $\partial T(x^{-1}) \subseteq \Phi(\partial t(x^{-1}) )$ it follows by \Cref{lem:suffix_conetype_containment} and  \Cref{thm:cone_type_formulas} again that
    \begin{equation*}
        T(x^{-1}) \subseteq T(y^{-1}) = \bigcap_{\Phi(\partial t(x^{-1}) )} H_{\beta}^+ \subseteq \bigcap_{\partial T(x^{-1})} H_{\beta}^{+} = T(x^{-1})
    \end{equation*}
    So $T(x^{-1}) = T(y^{-1})$ and since $x \in \Gamma$, by \Cref{thm:gates_of_conetype_partition} we must have $y = x$.
\end{proof}

\begin{corollarySec} \label{cor:partial_elements_contained_in_phi1_elements}
    For $x \in W$, $\partial t(x^{-1}) \subseteq \phi^1(x)$.
\end{corollarySec}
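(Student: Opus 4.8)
The plan is to observe that the only difference between the defining conditions of $\partial t(x^{-1})$ and $\phi^1(x)$ is the requirement placed on the unique right-descent root: a prefix $z \preceq x$ with $\Phi^R(z) = \{\alpha\}$ belongs to $\partial t(x^{-1})$ exactly when $\alpha \in \partial T(x^{-1})$, and belongs to $\phi^1(x)$ exactly when $\alpha \in \Phi^1(x)$. Hence the corollary follows immediately once we establish the set inclusion $\partial T(x^{-1}) \subseteq \Phi^1(x)$.

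To prove that inclusion, I would take an arbitrary $\alpha \in \partial T(x^{-1})$. Applying \Cref{thm:boundary_roots_definition} with $T = T(x^{-1})$, there exists $w \in W$ with $\Phi(x) \cap \Phi(w) = \{\alpha\}$. Then \Cref{lem:boundary_roots_are_phi1}(2) gives $\alpha \in \Phi^1(x) \cap \Phi^1(w)$, and in particular $\alpha \in \Phi^1(x)$. Since $\alpha$ was arbitrary, this shows $\partial T(x^{-1}) \subseteq \Phi^1(x)$.

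Finally, given any $z \in \partial t(x^{-1})$ we have $z \preceq x$ and $\Phi^R(z) = \{\alpha\}$ for some $\alpha \in \partial T(x^{-1})$; by the inclusion just established, $\alpha \in \Phi^1(x)$, so $z$ satisfies precisely the membership condition defining $\phi^1(x)$, whence $z \in \phi^1(x)$. I do not anticipate any obstacle here: the entire substance of the argument is already packaged in \Cref{lem:boundary_roots_are_phi1}, which records that a root cutting out a singleton intersection of two inversion sets must be a short inversion of each of them; the present statement is then just an unwinding of the two definitions.
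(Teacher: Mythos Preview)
Your argument is correct and is essentially the same as the paper's: both reduce the corollary to the inclusion $\partial T(x^{-1}) \subseteq \Phi^1(x)$ and then compare the defining conditions of the two sets. If anything, your write-up is cleaner, since you explicitly derive $\partial T(x^{-1}) \subseteq \Phi^1(x)$ from \Cref{thm:boundary_roots_definition} and \Cref{lem:boundary_roots_are_phi1}, whereas the paper states this inclusion without citation and additionally references \Cref{prop:bijection_join_refine_phi1}, \Cref{thm:every_gate_is_a_join_of_tightgates}, and \Cref{prop:x_is_join_of_phi1_tight_elements}, which are really only needed to locate the definitions of $\partial t(x^{-1})$ and $\phi^1(x)$ rather than for the argument itself.
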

\begin{proof}
    \Cref{prop:bijection_join_refine_phi1} shows that for each $\beta \in \Phi^1(x)$ there is a unique minimal length element $j_{\beta} \in \phi^1(x)$. Since $\partial T(x^{-1}) \subseteq \Phi^1(x)$ the result follows by \Cref{thm:every_gate_is_a_join_of_tightgates} and \Cref{prop:x_is_join_of_phi1_tight_elements}.
\end{proof}

\begin{corollarySec} \label{cor:every_conetype_is_expressible_as_an_intersection_of_tightgates}
    Each cone type $T \in \mathbb{T}$ is expressible as an intersection of cone types of tight gates.
\end{corollarySec}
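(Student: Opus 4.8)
The plan is to obtain this as an immediate consequence of \Cref{thm:every_gate_is_a_join_of_tightgates} together with \Cref{cone_type_of_join}. Fix a cone type $T \in \mathbb{T}$ and let $x$ be the gate of the cone type part $Q(T)$, that is, the unique minimal length element of $Q(T)$; by definition of $Q(T)$ we have $T(x^{-1}) = T$, and $x \in \Gamma$. The reason for passing to the gate is that \Cref{thm:every_gate_is_a_join_of_tightgates} applies only to elements of $\Gamma$: it gives $x = \bigvee \partial t(x^{-1})$ where $\partial t(x^{-1}) \subseteq \Gamma^0$ is a finite set of tight gates, each a prefix of $x$.

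Since every element of $\partial t(x^{-1})$ is a prefix of $x$, this set is bounded above in the weak order, so its join exists and the hypotheses of \Cref{cone_type_of_join} are satisfied. Applying that proposition with $X = \partial t(x^{-1})$ and $y = x$ yields
\begin{equation*}
T = T(x^{-1}) = \bigcap_{z \in \partial t(x^{-1})} T(z^{-1}),
\end{equation*}
which expresses $T$ as an intersection of the cone types $T(z^{-1})$ of tight gates $z \in \Gamma^0$, as claimed. In fact this reproduces exactly the computation already carried out inside the proof of \Cref{thm:every_gate_is_a_join_of_tightgates}, so the corollary is essentially extracted from that argument.

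There is no real obstacle here beyond invoking the two cited results in the correct order; the only subtleties are the two already noted — that one must use the gate representative $x$ of $Q(T)$ so that \Cref{thm:every_gate_is_a_join_of_tightgates} is applicable, and that $\partial t(x^{-1})$ must be checked to be bounded so that \Cref{cone_type_of_join} applies. One may additionally remark that this refines \Cref{thm:conetypepartition_asintersectionofconetypes}: rather than needing the full set $\mathbb{T}$ of cone types to recover $\mathscr{T}$ by intersection, the subcollection $\{ T(z^{-1}) \mid z \in \Gamma^0 \}$ of cone types of tight gates already suffices.
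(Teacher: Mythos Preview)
Your proof is correct and follows exactly the same approach as the paper: invoke \Cref{thm:every_gate_is_a_join_of_tightgates} to write the gate $x$ as a join of tight gates, then apply \Cref{cone_type_of_join} to convert the join into an intersection of cone types. Your added remarks on boundedness and on passing to the gate representative are helpful clarifications, and your closing observation about refining \Cref{thm:conetypepartition_asintersectionofconetypes} in fact anticipates the content of the next corollary in the paper.
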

\begin{proof}
    By \Cref{thm:every_gate_is_a_join_of_tightgates} it follows that every gate is a join of a set of tight gates $\partial t(x^{-1}) $. Then by \Cref{cone_type_of_join} we have that $T(x^{-1}) = \bigcap_{z \in \partial t(x^{-1}) } T(z^{-1})$.
\end{proof}

\begin{corollarySec} \label{cor:tight_gate_arrangement}
	 Define $x \sim y$ if the following holds: $x \in T(w^{-1})$ if and only if $y \in T(w^{-1})$ for all $w \in \Gamma^0$. 
	 \par
	 Let $\mathscr{T}^0$ be the partition of $W$ with parts $P$ defined by the relation $x \sim y$. Then 
	 $\mathscr{T}^0 = \mathscr{T}$ and hence $\mathscr{T}^0$ is a regular partition.
\end{corollarySec}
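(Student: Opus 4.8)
The plan is to show that the equivalence relation $\sim$ defining the parts of $\mathscr{T}^0$ coincides with the relation defining $\mathscr{T}$ in \Cref{thm:conetypepartition_asintersectionofconetypes}; the regularity assertion then follows at once, since $\mathscr{T}$ is (the minimal) regular partition of $W$. Recall that \Cref{thm:conetypepartition_asintersectionofconetypes} identifies $\mathscr{T}$ as the partition whose parts are the equivalence classes of the relation ``$x$ and $y$ lie in exactly the same cone types $T \in \mathbb{T}$''.

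First I would record the easy inclusion. Each $T(w^{-1})$ with $w \in \Gamma^0$ is itself a cone type in $\mathbb{T}$, so if $x$ and $y$ lie in the same cone types then in particular $x \in T(w^{-1})$ if and only if $y \in T(w^{-1})$ for every $w \in \Gamma^0$; that is, $x \sim y$. Hence every part of $\mathscr{T}$ is contained in a part of $\mathscr{T}^0$.

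For the reverse inclusion, the key input is \Cref{cor:every_conetype_is_expressible_as_an_intersection_of_tightgates} (which rests on \Cref{thm:every_gate_is_a_join_of_tightgates}): every $T \in \mathbb{T}$ can be written as $T = \bigcap_{w \in A} T(w^{-1})$ for some finite set $A \subseteq \Gamma^0$ of tight gates. So suppose $x \sim y$, i.e. $x \in T(w^{-1})$ iff $y \in T(w^{-1})$ for all $w \in \Gamma^0$. Given an arbitrary $T \in \mathbb{T}$, write $T = \bigcap_{w \in A} T(w^{-1})$ with $A \subseteq \Gamma^0$; then $x \in T$ iff $x \in T(w^{-1})$ for all $w \in A$, which by the hypothesis on $x$ and $y$ is equivalent to $y \in T(w^{-1})$ for all $w \in A$, i.e. to $y \in T$. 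Thus $x$ and $y$ lie in the same cone types, so each part of $\mathscr{T}^0$ is contained in a part of $\mathscr{T}$.

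Combining the two inclusions yields $\mathscr{T}^0 = \mathscr{T}$, and since $\mathscr{T}$ is a regular partition, so is $\mathscr{T}^0$. The argument is essentially formal once \Cref{cor:every_conetype_is_expressible_as_an_intersection_of_tightgates} is available, so there is no genuine obstacle; the only point worth double-checking is that the finite index set $A$ can indeed be taken inside $\Gamma^0$ rather than merely inside $\Gamma$, which is exactly what \Cref{thm:every_gate_is_a_join_of_tightgates} provides through the set $\partial t(x^{-1}) \subseteq \Gamma^0$.
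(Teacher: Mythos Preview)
Your proof is correct and follows essentially the same route as the paper: both arguments reduce to \Cref{thm:conetypepartition_asintersectionofconetypes} and use the fact (via \Cref{thm:every_gate_is_a_join_of_tightgates}/\Cref{cor:every_conetype_is_expressible_as_an_intersection_of_tightgates}) that every cone type is an intersection of cone types of tight gates. The paper phrases the reverse inclusion by working with gates $g \in \Gamma$ and the set $\partial t(g^{-1}) \subseteq \Gamma^0$, while you invoke \Cref{cor:every_conetype_is_expressible_as_an_intersection_of_tightgates} directly, but the content is the same.
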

\begin{proof}
	By \Cref{thm:conetypepartition_asintersectionofconetypes} it suffices to show that if $x,y$ are in the same part of $\mathscr{T}^0$ then  $x,y$ are in the same part of $\mathscr{T}$. Suppose $x \in T(z^{-1})$ if and only if $y \in T(z^{-1})$ for all $z \in \mathscr{T}^0$. Let $x \in T(g^{-1})$ for $g \in \Gamma \setminus \Gamma^0$. By \Cref{thm:every_gate_is_a_join_of_tightgates} and \Cref{cone_type_of_join} we then have $x \in T(w^{-1})$ for all $w \in \partial t(g^{-1})$ where 
	$$ \partial t(g^{-1})  = \{ w \in W \mid w \preceq g \text{ with } \Phi^R(w) = \{ \alpha \} \text{ and } \alpha \in \partial T(g^{-1})\}$$ 
	Since $\partial t(g^{-1}) \subset \Gamma^0$ this implies $y \in T(w^{-1})$ for all $w \in \partial t(g^{-1})$ and hence by \Cref{thm:every_gate_is_a_join_of_tightgates} and \Cref{cone_type_of_join} again this implies $y 
	\in T(g^{-1})$. Similarly, if $x \notin T(g^{-1})$ for some $g \in \Gamma \setminus \Gamma^0$ then by  \Cref{thm:every_gate_is_a_join_of_tightgates}, there must be some $w \preceq g$ with $\Phi^R(w) = \{ \alpha \}$ and $\alpha \in \partial T(g^{-1}) \cap \partial T(w^{-1})$ such that $x \notin T(w^{-1})$. Therefore, also $y \notin T(w^{-1})$.
\end{proof}

\begin{figure}
	\includegraphics[scale=0.3]{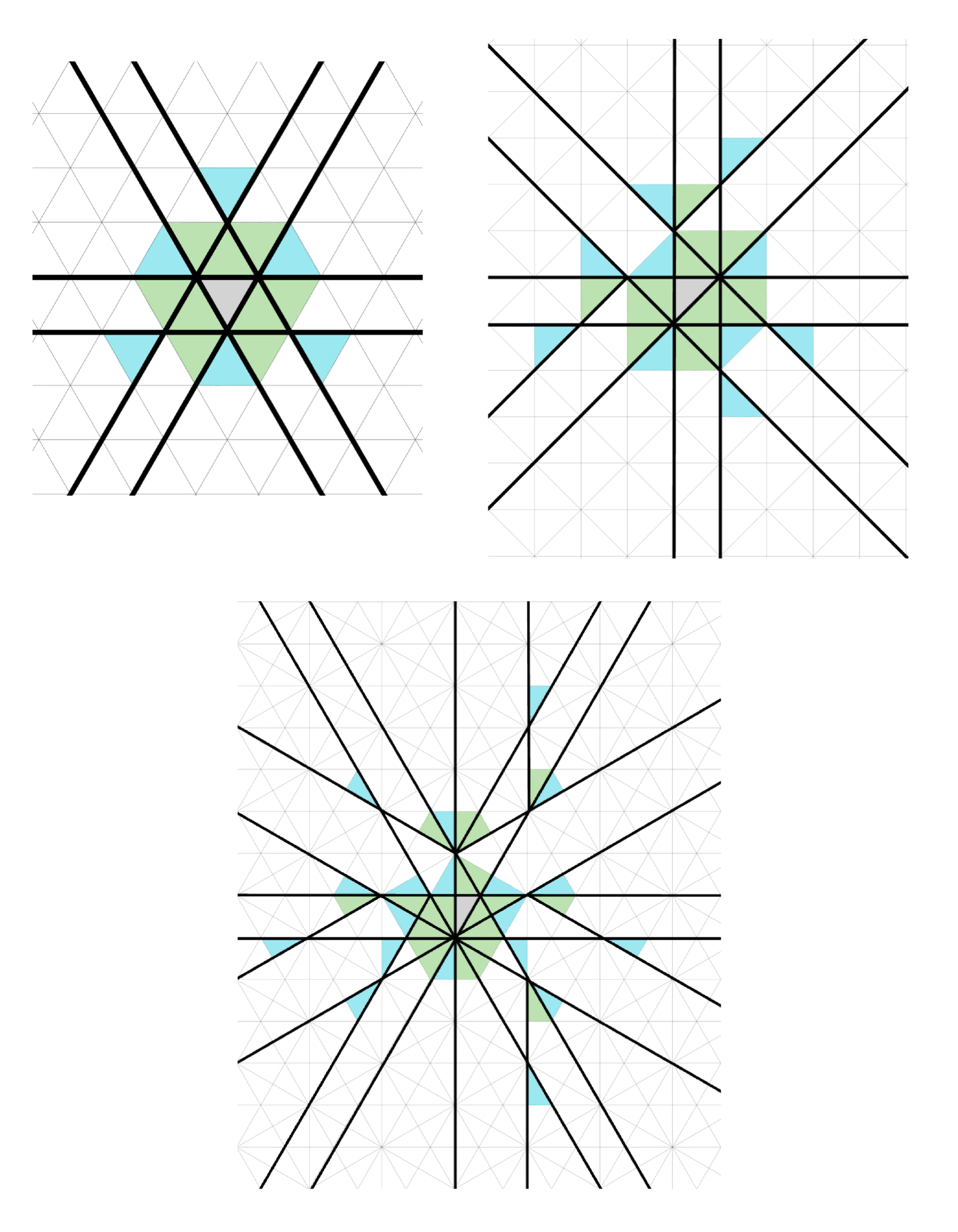}
	\caption{The partition $\mathscr{T}^0 = \mathscr{T}$ for the rank 3  Coxeter groups of affine type ($\widetilde{A}_2$, $\widetilde{B}_2$ and $\widetilde{G}_2$). The green alcoves represent the tight gates and the cyan alcoves are the gates which are non-trivial joins of tight gates.}
	\label{fig:tight_gate_arrangement_affine_rank_3}
\end{figure}

\begin{remark}
	A consequence of \Cref{cor:tight_gate_arrangement} is that to determine whether two elements $x,y$ have the same cone type (or equivalently, whether $x^{-1}$ and $y^{-1}$ live in the same part of the cone type partition), one only needs to compute the set of tight gates $\Gamma^0$ and then check whether  $x \in T(w^{-1})$ if and only if $y \in T(w^{-1})$ for all $w \in \Gamma^0$. Our computations using Sagemath (\cite{sagemath}) in \Cref{subsec:data_for_select_coxeter_systems} show that often, the set $\Gamma^0$ is much smaller than $\Gamma$. We illustrate the $\mathscr{T}^0$ arrangements for the rank 3 Coxeter groups of affine type in \Cref{fig:tight_gate_arrangement_affine_rank_3}.
\end{remark}

Recall that for a poset $P$ an element $x \in P$ is \textit{join-irreducible} if $x$ is not the minimal element in $P$ and if $x = a \vee b$ then either $x = a$ or $x = b$. Equivalently, if $X \subseteq P$ and $x = \bigvee X$ then $x \in X$.

\begin{theoremSec} \label{join_irreducible_theorem}
	The set 
	$$\Gamma^0 = \{x \in \Gamma \mid |\Phi^R(x)| = 1 \}$$ 
	is the set of join-irreducible elements of $(\Gamma , \preceq)$.
\end{theoremSec}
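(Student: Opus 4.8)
The plan is to mirror the proof of \Cref{join_irreducible_shi_m_theorem}, establishing the two inclusions separately. Throughout I will use that $\Gamma$ is a Garside shadow, so a bounded join taken in $(\Gamma,\preceq)$ coincides with the join in $W$; that $\Gamma$ is exactly the set of gates of the cone type partition $\mathscr{T}$, so that each part $Q(T)$ meets $\Gamma$ in the single element which is a prefix of every member of $Q(T)$ (\Cref{thm:gates_of_conetype_partition}); and the cone-type identities \Cref{thm:cone_type_formulas} and \Cref{cone_type_of_join}. I also use the standard fact that if $\beta=-x\alpha_s\in\Phi^R(x)$ then $s_\beta x=xs$ and $\Phi(xs)=\Phi(x)\setminus\{\beta\}$.

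\emph{If $x\in\Gamma$ and $|\Phi^R(x)|=1$ then $x$ is join-irreducible.} Write $\Phi^R(x)=\{\beta\}$ and let $s$ be the unique right descent of $x$, so that $xs=s_\beta x$ is the unique lower cover of $x$ in the right weak order. Suppose $x=\bigvee A$ with $A\subseteq\Gamma$ and $x\notin A$. Then every $a\in A$ satisfies $a\prec x$, and since $x$ has a unique lower cover, any saturated chain from $a$ up to $x$ passes through $xs$, so $a\preceq xs$. Hence $x=\bigvee A\preceq xs\prec x$, a contradiction. (Equivalently one may invoke \Cref{cor:phi1_of_join_isphi1_of_x}: some $a\in A$ has $\beta\in\Phi^1(a)\subseteq\Phi(a)$, but $a\preceq xs$ forces $\Phi(a)\subseteq\Phi(x)\setminus\{\beta\}$, the same contradiction.)

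\emph{If $x\in\Gamma$ is join-irreducible then $|\Phi^R(x)|=1$.} Suppose instead $\Phi^R(x)=\{\alpha_1,\dots,\alpha_k\}$ with $k\ge 2$; I will exhibit $x$ as a join of strictly smaller gates. For each $i$ put $x_i=s_{\alpha_i}x\prec x$, so $\Phi(x_i)=\Phi(x)\setminus\{\alpha_i\}$, and let $y_i=\pi(x_i)\in\Gamma$ be its $\Gamma$-projection, so $y_i\preceq x_i\prec x$ and $T(y_i^{-1})=T(x_i^{-1})$. By \Cref{thm:cone_type_formulas}, $T(x_i^{-1})=\bigcap_{\alpha\in\Phi(x)\setminus\{\alpha_i\}}H_\alpha^+$, and because $k\ge 2$ every root of $\Phi(x)$ belongs to $\Phi(x)\setminus\{\alpha_i\}$ for some $i$, whence
\begin{equation*}
\bigcap_{i=1}^{k}T(x_i^{-1})=\bigcap_{\alpha\in\Phi(x)}H_\alpha^+=T(x^{-1}).
\end{equation*}
Let $y=\bigvee_{i=1}^{k}y_i$; this join exists since $x$ is a common upper bound of the $y_i$, and $y\preceq x$. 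By \Cref{cone_type_of_join}, $T(y^{-1})=\bigcap_i T(y_i^{-1})=\bigcap_i T(x_i^{-1})=T(x^{-1})$, so $y\in Q(T(x^{-1}))$; since $x$ is the gate of $Q(T(x^{-1}))$ it is a prefix of $y$, i.e. $x\preceq y$, and therefore $x=y=\bigvee_i y_i$. As each $y_i\prec x$, the set $\{y_1,\dots,y_k\}\subseteq\Gamma$ does not contain $x$, so $x$ is not join-irreducible, a contradiction.

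I expect the delicate point to be the cone-type identity $\bigcap_i T(x_i^{-1})=T(x^{-1})$. It rests on the precise inversion-set computation $\Phi(s_{\alpha_i}x)=\Phi(x)\setminus\{\alpha_i\}$, which uses that each $\alpha_i$ is a right-descent root, and, essentially, on the hypothesis $k\ge 2$: with only a single right-descent root, removing one boundary hyperplane would strictly enlarge the cone and the argument collapses (as it should, since in that case $x$ is join-irreducible). The rest is order-theoretic bookkeeping: upgrading $y\preceq x$ to $y=x$ via the gate property of $x\in\Gamma$, and noting that joins in $(\Gamma,\preceq)$ agree with joins in $W$ because a Garside shadow is closed under bounded joins.
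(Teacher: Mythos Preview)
Your proof is correct and follows essentially the same strategy as the paper's. The only minor difference is in the second direction: the paper first notes that $x=\bigvee_i x_i$ and then applies \Cref{cone_type_of_join} to obtain $T(x^{-1})=\bigcap_i T(x_i^{-1})$, whereas you derive this intersection identity directly from \Cref{thm:cone_type_formulas} by observing that $\bigcup_i(\Phi(x)\setminus\{\alpha_i\})=\Phi(x)$ when $k\ge 2$. Both routes are equivalent and the remainder of the argument (projecting to gates $y_i=\pi(x_i)$, taking the join, and using the gate property to conclude $x=y$) is identical.
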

\begin{proof}
    Let $x \in \Gamma^0$ and $\Phi^R(x) = \{ \beta \}$. If $x$ is not join-irreducible, then $x  = \bigvee A$ for some $A \subseteq \Gamma$ with $x \notin A$. Since $\beta \in \Phi^1(x)$ by \Cref{cor:phi1_of_join_isphi1_of_x} $\beta \in \Phi^1(y)$ for some $y \in A$ and $y \prec x$. Therefore, there must be $\alpha \neq \beta \in \Phi^R(x)$, a contradiction.
    \par
    Now let $x \in \Gamma$ be join-irreducible in $(\Gamma, \preceq)$ and suppose $|\Phi^R(x)| > 1$. Let $\Phi^R(x) = \{ \alpha_1 , \ldots \alpha_k \}$ and $x_i := s_{\alpha_{i}} x$. Let $\pi(x_i) \in \Gamma$ be the minimal element of the cone type part containing $x_i$, that is the part $Q(T(x_{i}^{-1}))$. We claim that $x =  \bigvee \pi(x_i)$. Let $y = \bigvee \pi(x_i)$. Note that since $x$ is an upper bound for each $\pi(x_i)$, this implies $y \preceq x$. Then since $x = \bigvee x_i$ and $T(x_{i}^{-1}) = T(\pi(x_{i})^{-1})$ for each $i$, by \Cref{cone_type_of_join} we have
    \begin{equation*}
    T(x^{-1}) = \bigcap_{x_i} T(x_{i}^{-1}) 
    		 = \bigcap_{x_i} T(\pi(x_{i})^{-1})
		 = T(y^{-1})
    \end{equation*}
    Since $x \in \Gamma$ we must have $x = y$.
\end{proof}

\section{Computing the sets $\mathcal{S}$ and $\Gamma^0$} \label{sec:computing_gamma0}

The super elementary roots $\mathcal{S}$ and tight gates $\Gamma^0$ are intimately connected by definition. In this section, we utilise the results of \Cref{sec:Tight gates and Join-irreducible elements of  Gamma} to give a direct and efficient method for computing $\Gamma^0$ without computing $\Gamma$ first; effectively obtaining the minimal data to determine whether two elements have the same cone type. 

Let us first give a bit more background to set up the algorithm and illustrate the connection between our work and some important recent developments in Coxeter group theory.

We first recall the definition of the set of \textit{elementary} roots $\mathscr{E}$ in Coxeter groups. For roots $\alpha, \beta \in \Phi^{+}$ we say $\beta$ \textit{dominates} $\alpha$ if whenever $w\beta < 0$ implies $w\alpha < 0$ for all $w \in W$. A root is \textit{elementary} if it does not dominate any root besides itself. Introduced by Brink and Howlett in \cite{BH93}, the set of elementary roots $\mathscr{E}$ were shown to be finite for any finitely generated Coxeter group \cite[Theorem 2.8]{BH93} and play a fundamental role in their finite state automaton recognising the language of reduced words of $(W,S)$. They have been extensively studied (see for example \cite[Chapter 4]{BB05}, \cite{HD16}, \cite{dyer2024shi}) and are computed in a straightforward manner, as described in the aforementioned work. For our purposes, it is enough to know that the set $\mathscr{E}$ can be easily computed.

The \textit{Shi arrangement} or \textit{Shi partition} $\mathscr{S}$ is the partition of $W$ induced by the elementary walls, and like the cone type partition $\mathscr{T}$, each part corresponds to a state of a finite state automaton recognising the language of reduced words of $(W,S)$ (see the example of $\widetilde{A}_2$ in \Cref{fig:tight_gate_arrangement_affine_rank_3}, in this case $\mathscr{S} = \mathscr{T}$). This is in fact one way to visualise Brink and Howlett's automaton. Also, like the cone type partition $\mathscr{T}$, each part $P$ of $\mathscr{S}$ contains an element of minimal length, which is a gate of $P$ (proven in \cite[Theorem 1.1]{dyer2024shi} and \cite[Theorem 1.1]{ALCO_2024__7_6_1879_0}). The unique minimal length representatives of these parts are known as the \textit{low}-elements $L$. Formally, we have the following definition of low-elements.

\begin{definition} \cite[Definition 3.24]{HD16}
    An element $w  \in W$ is \textit{low} if $\Phi(w) = cone_{\Phi}(A)$ for some $A \subseteq \mathscr{E}$. Equivalently, $w$ is \textit{low} if $\Phi^1(w) \subseteq \mathscr{E}$.
\end{definition}

For the purposes of \Cref{algo:compute_tight_gates_and_super_elementary_roots} we pay particular attention to the set of \textit{tight} low elements, that is the set 
\begin{equation*}
    L^0 = \{w  \in W \mid |\Phi^R(w)| = 1 \text{ and } \Phi^R(w) \subseteq \mathscr{E} \}
\end{equation*}

Crucially for \Cref{algo:compute_tight_gates_and_super_elementary_roots}, by \cite[Theorem 4.8 (3)]{dyer2024shi} we have $L^0 \subseteq L$ and since every super-elementary root is elementary (\cite[Proposition 7.2]{PARKINSON2022108146}) it follows by \Cref{thm:gates_characterised_by_phi0} that $\Gamma^0 \subseteq L^0 \subseteq L$. Note also that $L$ is finite (since $\mathscr{E}$ is finite).

In general, the set $\mathcal{S}$ can be determined in finite time by considering each root $\beta \in \mathscr{E}$ and checking whether there exists a pair of gates or low-elements $\{x, y\}$ with $\Phi(x) \cap \Phi(y) = \{ \beta \}$. An application of  \Cref{thm:witnesses_are_gated} and \Cref{suffix_closure_of_tight_gates} gives an alternative method which concurrently computes the set $\mathcal{S}$ and $\Gamma^0$ without first having to compute the entire set $\Gamma$ or $L$. It only requires that the elementary roots $\mathscr{E}$ is computed first (which is quite efficient, see \cite[Chapter 4.7]{BB05}; we note that elementary roots are also called \textit{small} roots in the literature). We remind the reader that $\Delta$ is the set of simple roots in the following algorithm.

\begin{algorithm}[H]
	\caption{Compute tight gates and super-elementary roots}
	\label{algo:compute_tight_gates_and_super_elementary_roots}
	\begin{algorithmic}[1]
		\STATE Let $L' = S$, $\mathcal{S'} = \Delta$, $\Gamma' = \emptyset$
		\STATE Initialize $L'_i = \{x \in L' \mid \ell(x) = i \}$ with $i = 1$
		\WHILE{$L'_i \neq \emptyset$}
		\FOR{$x \in L'_i$}
		\FOR{$s \notin D_L(x)$}
		\STATE Set $y = s \cdot x$
		\IF{$|\Phi^R(y)| = 1$ and $\{ \beta \} = \Phi^R(y) \in \mathscr{E}$}
		\STATE Add $y$ to $L'$
		\IF{there is $z \in L'$ with $\Phi^R(z) = \{ \beta \}$ and $\Phi(z) \cap \Phi(y) = \{ \beta \}$}
		\STATE Add $\beta$ to $\mathcal{S'}$
		\STATE Add $y, z$ to $\Gamma'$
		\ENDIF
		\ENDIF
		\ENDFOR
		\ENDFOR
		\STATE Set $i = i + 1$
		\ENDWHILE
	\end{algorithmic}
	The algorithm terminates in finite time and upon termination $\Gamma' = \Gamma^0$ and $\mathcal{S'} = \mathcal{S}$
\end{algorithm}
\begin{proof}
By \Cref{suffix_closure_of_tight_gates}, the set of tight gates $\Gamma^0$ is closed under suffix: if $x\in\Gamma^0$ and $s\in D_L(x)$, then $sx\in\Gamma^0$
and $\ell(sx)=\ell(x)-1$. Since $\Gamma^0$ is finite, it follows that every element $g\in\Gamma^0$ admits a length--decreasing chain contained in $\Gamma^0$
\begin{equation*}
    g = g_k \xrightarrow{s_k} g_{k-1} \xrightarrow{s_{k-1}} \cdots \xrightarrow{s_2} g_1,
\end{equation*}
where $g_1\in S$, $\ell(g_i)=i$, and $g_{i-1}=s_i g_i$ with $s_i\in D_L(g_i)$. Equivalently, every tight gate can be obtained from a simple reflection by successively left-multiplying by simple reflections $s\notin D_L(x)$, increasing length by one at each step. The algorithm hence performs a breadth--first search with respect to length, starting from the simple reflections and iterating over all possible left multiplications $x\mapsto sx$ with $s\notin D_L(x)$. Consequently, every element of $\Gamma^0$ is eventually encountered and added to $L' \subseteq L^0$. As discussed on the previous page, since $\mathcal{S} \subseteq \mathscr{E}$, by the characterisation of $\Gamma$ given in \Cref{thm:gates_characterised_by_phi0} we have 

\begin{equation*}
\Gamma^0 \subseteq L^0= \{ y \in W \mid |\Phi^R(y)| = 1 \text{ and } \Phi^R(y) \subseteq \mathscr{E} \} \subseteq L    
\end{equation*}

Since $L$ is finite, the algorithm terminates in finite time. Finally, once an element $y\in\Gamma^0$ is encountered, \Cref{thm:witnesses_are_gated}
guarantees the existence of a unique element $z\in\Gamma^0$ with
\begin{equation*}
    \Phi(y)\cap\Phi(z)=\{\beta\}=\Phi^R(y)=\Phi^R(z)
\end{equation*}
so the algorithm adds $\beta$ to $\mathcal{S'}$ and adds $y$ and $z$ to $\Gamma'$. Thus, upon termination, $\Gamma'=\Gamma^0$ and $\mathcal{S}'=\mathcal{S}$.
\end{proof}

We also note the following consequence of \Cref{thm:witnesses_are_gated} which shows that the number of non-simple tight gates is always even.

\begin{corollarySec}
    Let $K := |\Gamma^0 \setminus S|$. Then $K$ is even, and $|\mathcal{S} \setminus \Delta| \le K/2 $.
\end{corollarySec}
\begin{proof}
    \Cref{thm:witnesses_are_gated} shows that tight gates $g \in \Gamma^0 \setminus S$ are naturally paired according to their final roots $\Phi^R(g)$ in the following way: For each $\beta \in \mathcal{S} \setminus \Delta$, there is some $x \in \Gamma^0$ with $\beta \in \partial T(x^{-1}) \cap \Phi^R(x)$. Hence by \Cref{thm:witnesses_are_gated} there is a unique element $y \in \Gamma^0 \setminus S$ such that $\Phi(x) \cap \Phi(y) = \{ \beta \}$ and $\Phi^R(y) = \{ \beta \}$. By symmetry, $x$ is also the unique element of $\Gamma^0$ with $\Phi(x) \cap \Phi(y) = \{ \beta \}$.
\end{proof}

\subsection{Data for select Coxeter groups} \label{subsec:data_for_select_coxeter_systems}

We utilise \Cref{algo:compute_tight_gates_and_super_elementary_roots} to compute the size of $\mathcal{S}$ and $\Gamma^0$ for some Coxeter systems in low rank. We note that as the size of $\Gamma$ increases, in general, the ratio $|\Gamma^0|/|\Gamma|$ tends to decrease, making the computation of $\Gamma^0$ generally much more efficient to determine whether two elements have the same cone type.

\begin{figure}[H]
	\centering
	\begin{minipage}{0.45\linewidth}
		\centering
\begin{tabular}{|c|c|c|c|c|c|c|}
	\hline
	$W$ & $|\mathcal{E}|$ & $|\mathcal{S}|$ & $|L|$ & $|L^0|$ & $|\Gamma|$ & $|\Gamma^0|$ \\ \hline
	$\widetilde{A_2}$ & 6 & 6 & 16 & 9 & 16 & 9 \rule{0pt}{2.5ex} \\ \hline
	$\widetilde{B_2}$ & 8 & 8 & 25 & 14 & 24 & 13 \rule{0pt}{2.5ex} \\ \hline
	$\widetilde{G_2}$ & 12 & 12 & 49 & 26 & 41 & 21 \rule{0pt}{2.5ex} \\ \hline
	$\widetilde{A_3}$ & 12 & 12 & 125 & 28 & 125 & 28 \rule{0pt}{2.5ex} \\ \hline
	$\widetilde{B_3}$ & 18 & 18 & 343 & 66 & 315 & 58 \rule{0pt}{2.5ex} \\ \hline
	$\widetilde{C_3}$ & 18 & 18 & 343 & 66 & 317 & 58 \rule{0pt}{2.5ex} \\ \hline
	$\widetilde{A_4}$ & 20 & 20 & 1296 & 75 & 1296 & 75 \rule{0pt}{2.5ex} \\ \hline
	$\widetilde{B_4}$ & 32 & 32 & 6561 & 270 & 5789 & 227 \rule{0pt}{2.5ex} \\ \hline
	$\widetilde{C_4}$ & 32 & 32 & 6561 & 270 & 5860 & 227 \rule{0pt}{2.5ex} \\ \hline
	$\widetilde{D_4}$ & 24 & 24 & 2401 & 140 & 2400 & 139 \rule{0pt}{2.5ex} \\ \hline
	$\widetilde{F_4}$ & 48 & 48 & 28561 & 1054 & 22428 & 715 \rule{0pt}{2.5ex} \\ \hline
	$\widetilde{A_5}$ & 30 & 30 & 16807 & 186 & 16807 & 186 \rule{0pt}{2.5ex} \\ \hline
	$\widetilde{B_5}$ & 50 & 50 & 161051 & 1030 & 137147 & 836 \rule{0pt}{2.5ex} \\ \hline
	$\widetilde{C_5}$ & 50 & 50 & 161051 & 1030 & 139457 & 836 \rule{0pt}{2.5ex} \\ \hline
	$\widetilde{D_5}$ & 40 & 40 & 59049 & 608 & 58965 & 596 \rule{0pt}{2.5ex} \\ \hline
\end{tabular}
	\end{minipage}
	\hspace{0.05\linewidth}
	\begin{minipage}{0.45\linewidth}
		\centering
		\begin{tabular}{|c|c|c|c|c|c|c|}
			\hline
			$W$ & $|\mathcal{E}|$ & $|\mathcal{S}|$ & $|L|$ & $|L^0|$ & $|\Gamma|$ & $|\Gamma^0|$ \\ \hline
			$X_4(4)$ & 25 & 25 & 438 & 112 & 392 & 98 \\ \hline
			$X_4(5)$ & 32 & 32 & 516 & 158 & 462 & 138 \\ \hline
			$Y_4$ & 32 & 32 & 687 & 166 & 578 & 150 \\ \hline
			$Z_4$ & 30 & 30 & 513 & 142 & 473 & 132\\ \hline
			$X_5(3)$ & 114 & 114 & 101412 & 5767 & 52542 & 4071 \\ \hline
			$X_5(4)$ & 83 & 83 & 25708 & 3128 & 22886 & 2871 \\ \hline
			$X_5(5)$ & 135 & 135 & 42064 & 6014 & 37956 & 5523 \\ \hline
			$Z_5$ & 120 & 120 & 41385 & 5476 & 39138 & 5391 \\ \hline
		\end{tabular}
	\end{minipage}
	\caption{Data for low rank affine and compact hyperbolic Coxeter groups.}
	\label{table:data_for_tight_gates}
\end{figure}

\section{Canonical join-representations} \label{sec:canonical_join_representations}

Our exploration of tight gates and join-representations of elements in \Cref{sec:Tight gates and Join-irreducible elements of  Gamma} led us close to the formative work of Reading and Speyer on canonical join-representations of elements in Coxeter groups. In this complimentary section, we give an alternative proof of \cite[Theorem 8.1]{e2fecb84-5a76-3abb-b4fd-01261d3c05ed} by applying \Cref{prop:bijection_join_refine_phi1} and utilising the \textit{short inversion} poset recently developed by Dyer, Hohlweg, Fishel and Marks \cite{dyer2024shi} in their study of low-elements and Shi-arrangements (see \Cref{subsec:maximal_dihedral_and_short_inversion_poset}). We first recall the definition of the notion of a \textit{canonical join-representation}.

\begin{definition}
    Let $L$ be a poset and $x \in L$. An expression $x = \bigvee U$ is a \textit{canonical join representation} if $U$ is irredundant and if any other join representation $x = \bigvee V$ has $U << V$.
\end{definition}

The following theorem was first proven by Reading (\cite[Theorem 10-3.9]{GratzerWehrung2016}) for finite Coxeter groups in his seminal work on the Lattice Theory of the Poset of Regions of hyperplane arrangements, and then later generalised to all Coxeter groups with Speyer.

\begin{theorem} \cite[Theorem 8.1]{e2fecb84-5a76-3abb-b4fd-01261d3c05ed}
\label{thm:canonical_join_representation}
   Let $(W,S)$ be a finitely generated Coxeter group and $w \in W$. For each $\beta \in \Phi^R(w)$ there is a unique minimal length element $j_{\beta} \in \{ v \mid v \preceq w, \beta \in \Phi(v) \}$. The canonical join representation of $w$ is $w = \bigvee \{ j_{\beta} \mid \beta \in \Phi^R(w) \}$.
\end{theorem}

\subsubsection{Maximal dihedral reflection subgroups and the short inversion poset} \label{subsec:maximal_dihedral_and_short_inversion_poset}

Let us briefly recall the notion of maximal dihedral reflection subgroups in Coxeter groups as this is required for the definition of the short inversion poset (introduced in \cite{dyer2024shi}) which we utilise in the proof of \Cref{thm:canonical_join_representation}. For a more in-depth discussion see \cite[Section 2.5]{HD16}. A dihedral reflection subgroup $W'$ of $W$ is a subgroup of rank 2. Every dihedral reflection subgroup is contained in a \textit{maximal} dihedral reflection subgroup (note that dihedral subgroups may be infinite).

For $\alpha , \beta \in \Phi^+$, we denote $\mathcal{M}_{\alpha, \beta}$ to be the  \textit{maximal} dihedral reflection subgroup whose associated root system $\Phi_{\mathcal{M}_{\alpha, \beta}}$ contains $\alpha$ and $\beta$. The simple roots of $\Phi_{\mathcal{M}_{\alpha, \beta}}$ is denoted $\Delta_{\mathcal{M}_{\alpha, \beta}}$.

We can now define the short inversion poset. Let $w \in W$. For $\alpha, \beta \in \Phi^1(w)$ define $\alpha \dot{{\prec}} \beta$ if $\beta \notin \Delta_{\mathcal{M}_{\alpha, \beta}}$. This is equivalent to $\alpha \in \Delta_{\mathcal{M}_{\alpha, \beta}}$ and $\beta \notin \Delta_{\mathcal{M}_{\alpha, \beta}}$ (See \cite[Proposition 3.2]{dyer2024shi}). Then define the relation $\preceq_{w}$ as the transitive and reflexive closure of $\alpha \dot{{\prec}} \beta$. This is a partial order on $\Phi^1(w)$.

\begin{theoremSec} \cite[Theorem 3.17]{dyer2024shi} \label{thm:short_inversion_sandwich}
    Let $w \in W$. For the poset $(\Phi^1(w), \preceq_w)$, the minimal elements are the left-descent roots in $\Phi^L(w)$ and the maximal elements are the right-descent roots in $\Phi^R(w)$. In other words, for any $\beta \in \Phi^1(w)$ there is $\alpha \in \Phi^L(w)$ and $\gamma \in \Phi^R(w)$ such that $\alpha \preceq_w \beta \preceq_w \gamma$.
\end{theoremSec}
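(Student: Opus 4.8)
The plan is to exploit the left--right symmetry, prove the easy inclusions from the total--order refinement of \Cref{prop:short_inversion_poset_determines_order}, and attack the hard inclusion by induction on $\ell(w)$ via the evolution formula \Cref{prop:phi1_roots_evolution}. The two halves of the statement --- that the minimal elements of $(\Phi^1(w),\preceq_w)$ are exactly $\Phi^L(w)$, and the maximal ones exactly $\Phi^R(w)$ --- are interchanged under $w\mapsto w^{-1}$: the map $f\colon\alpha\mapsto -w^{-1}\alpha$ is a length--preserving bijection $\Phi^1(w)\to\Phi^1(w^{-1})$ carrying $\Phi^L(w)$ onto $\Phi^R(w^{-1})$ and $\Phi^R(w)$ onto $\Phi^L(w^{-1})$, and since $\mathcal M_{f(\alpha),f(\beta)}=w^{-1}\mathcal M_{\alpha,\beta}w$ one expects $f$ to be order--reversing, sending $\alpha\preceq_w\beta$ to $f(\beta)\preceq_{w^{-1}}f(\alpha)$. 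The first task is therefore to verify this, which comes down to tracking how the simple system $\Delta_{\mathcal M_{\alpha,\beta}}$ of a maximal dihedral reflection subgroup transforms under conjugation relative to the ambient positive system. Granting it, it suffices to prove that the minimal elements of $(\Phi^1(w),\preceq_w)$ are precisely $\Phi^L(w)$.

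It is immediate that every element of $\Phi^L(w)$ is $\preceq_w$-minimal: for $s\in D_L(w)$, a reduced word for $w$ beginning with $s$ makes $\alpha_s$ the least element of $\Phi(w)$ in the total order of \Cref{prop:short_inversion_poset_determines_order}, so no $\alpha\in\Phi^1(w)$ satisfies $\alpha\prec_w\alpha_s$; dually a reduced word ending in $t\in D_R(w)$ shows every element of $\Phi^R(w)$ is maximal. For the converse, note that $\Phi^L(w)$ is exactly the set of simple roots lying in $\Phi^1(w)$ (if $\alpha_s\in\Phi^1(w)$ then $\ell(sw)=\ell(w)-1$, and conversely), so what remains is to show that every \emph{non-simple} $\beta\in\Phi^1(w)$ is not $\preceq_w$-minimal, i.e.\ that $\alpha\dot{\prec}_w\beta$ for some $\alpha\in\Phi^1(w)$.

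I would prove this by induction on $\ell(w)$. Fix $s\in D_L(w)$ and write $w=sv$ with $\ell(v)=\ell(w)-1$; by \Cref{prop:phi1_roots_evolution},
\[
\Phi^1(w)=\{\alpha_s\}\ \sqcup\ s\bigl(\{\gamma\in\Phi^1(v)\mid s_\gamma v< s\,s_\gamma v\}\bigr),
\]
so a non-simple $\beta\in\Phi^1(w)$ (in particular $\beta\neq\alpha_s$) equals $s\gamma$ for some surviving $\gamma\in\Phi^1(v)$. If $\gamma$ is not $\preceq_v$-minimal, the inductive hypothesis gives $\gamma'\in\Phi^1(v)$ with $\gamma'\dot{\prec}_v\gamma$; using $\mathcal M_{s\gamma,s\gamma'}=s\mathcal M_{\gamma,\gamma'}s$ when $s\gamma'>0$ one transports this to $s\gamma'\dot{\prec}_w\beta$, and if $s\gamma'<0$ (so $\gamma'$ does not survive) one argues directly that $\alpha_s\dot{\prec}_w\beta$. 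If $\gamma$ is $\preceq_v$-minimal, the inductive \emph{conclusion} for $v$ gives $\gamma=\alpha_r$ with $r\in D_L(v)$; since $\beta=s\alpha_r$ is non-simple we have $m(r,s)\ge 3$, and a direct computation in the rank-$2$ reflection subgroup $\mathcal M_{\alpha_s,\,s\alpha_r}$ --- which contains $s$ and $s_{s\alpha_r}=srs$, hence all of $W_{\{r,s\}}$ --- shows $\alpha_s$ is simple there while $s\alpha_r=\alpha_r+2\cos(\pi/m(r,s))\,\alpha_s$ is not, so $\alpha_s\dot{\prec}_w\beta$.

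The hard part, I expect, is the rank-$2$ bookkeeping in the inductive step: controlling how $\Delta_{\mathcal M_{\alpha,\beta}}$ behaves under conjugation by the peeled-off generator $s$ and relative to the ambient positive system (the very analysis the duality reduction also needs), handling the roots $\gamma'$ that become negative under $s$, and settling the base sub-case $\alpha_s\dot{\prec}_w s\alpha_r$ when $\mathcal M_{\alpha_s,s\alpha_r}$ is strictly larger than $W_{\{r,s\}}$. The choice of $s$ as a left \emph{descent} of $w$, rather than an arbitrary element of $J(\beta)$, is essential: $\alpha_s\dot{\prec}\beta$ can fail for $s\in J(\beta)$ --- e.g.\ in type $B_2$ one has $\alpha_1\not\dot{\prec}\alpha_1+\alpha_2$ although $s_1\in J(\alpha_1+\alpha_2)$ --- so it is precisely the structure carried by $w=sv$ with $s\in D_L(w)$ that lets the induction close.
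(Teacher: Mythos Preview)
This theorem is quoted from \cite{dyer2024shi} and the present paper does not supply a proof; it appears in the Preliminaries as a cited tool, so there is no ``paper's own proof'' to compare your attempt against.

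That said, your outline is a reasonable strategy in spirit, but the inductive step has a genuine gap beyond the hard parts you already flag. You write ``if $s\gamma'<0$ (so $\gamma'$ does not survive)'', conflating two distinct conditions. In fact $s\gamma'<0$ forces $\gamma'=\alpha_s$, which cannot occur since $\alpha_s\notin\Phi(v)$ (as $s\notin D_L(v)$); so $s\gamma'>0$ always. The surviving condition in \Cref{prop:phi1_roots_evolution} is instead $s_{\gamma'}v<s\,s_{\gamma'}v$, and a root $\gamma'\in\Phi^1(v)$ with $\gamma'\dot{\prec}_v\gamma$ may well fail it while $s\gamma'$ remains positive. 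In that case $s\gamma'\notin\Phi^1(w)$, and your proposed fallback ``one argues directly that $\alpha_s\dot{\prec}_w\beta$'' is not justified: there is no obvious reason the maximal dihedral subgroup $\mathcal{M}_{\alpha_s,\beta}$ should have $\beta$ non-simple. A second subtlety you underplay is the transport $\gamma'\dot{\prec}_v\gamma\Rightarrow s\gamma'\dot{\prec}_w s\gamma$: while $\mathcal{M}_{s\gamma',s\gamma}=s\mathcal{M}_{\gamma',\gamma}s^{-1}$, the canonical simple system $\Delta_{\mathcal{M}}$ depends on the ambient positive system, and $s(\Delta_{\mathcal{M}_{\gamma',\gamma}})=\Delta_{s\mathcal{M}_{\gamma',\gamma}s^{-1}}$ holds only when $\alpha_s\notin\Phi_{\mathcal{M}_{\gamma',\gamma}}$, which is not automatic. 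The actual proof in \cite{dyer2024shi} handles these issues through a more careful analysis of how the covering relation $\dot{\prec}$ interacts with the recursion; your sketch would need substantial work at exactly these points to close.
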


For our purposes, from \Cref{thm:short_inversion_sandwich} we utilise the fact that for $w \in W$ and any $\beta \in \Phi^1(w)$, there is a \textit{final} root $\gamma \in \Phi^R(w)$ such that $\beta \preceq_w \gamma$. In particular, the following fact (supplied by the next proposition) is critical for the proof of \Cref{thm:canonical_join_representation}: If $\beta \in \Phi^1(w)$ then there is $\gamma \in \Phi^R(w)$ such that for any reduced word of $w$, the wall $H_{\beta}$ is crossed before $H_{\gamma}$ in the geodesic path in $X^1$ from the identity $e$ to $w$.

\begin{proposition} \cite[Proposition 3.13]{dyer2024shi} \label{prop:short_inversion_poset_determines_order}
    The relation $\preceq_{w}$ is a partial order on $\Phi^1(w)$. Moreover, for any reduced word $w = s_1 s_2 \ldots s_k$ consider the following total order $\leq$ on $\Phi(w)$:
    \begin{equation*}
        \alpha_{s_{1}} < s_1 \alpha_{s_2} < \ldots < s_1 \ldots s_{k-1} \alpha_{s_{k}}
    \end{equation*}
    Then $\alpha \preceq_w \beta$ implies $\alpha \le \beta$ for any $\alpha, \beta \in \Phi^1(w)$.
\end{proposition}

We can now prove \Cref{thm:canonical_join_representation}.

\begin{proof}
    The first part of the theorem follows by \Cref{prop:bijection_join_refine_phi1} since $\Phi^R(w) \subseteq \Phi^1(w)$. For the second part, we first show that $w = \bigvee \{ j_{\beta} \mid \beta \in \Phi^R(w) \}$. First, note that $j_{\beta} \preceq w$ for all $\beta \in \Phi^R(w)$ and so $v := \bigvee \{ j_{\beta} \mid \beta \in \Phi^R(w) \}$ exists with $v \preceq w$. Our aim is to show that $v = w$. For $\alpha \in \Phi^1(w)$, let $j_{\alpha}$ be the unique minimal length element of $\{v \mid v \preceq w, \alpha \in \Phi(v) \}$. 
    
    By \Cref{inversion_set_of_join}, it suffices to show that for each $\alpha \in \Phi^1(w) \setminus \Phi^R(w)$ there is $\beta \in \Phi^R(w)$ such that $w_{\alpha} \preceq j_{\beta}$ for some $w_{\alpha} \preceq w$ with $\alpha \in \Phi^1(w_{\alpha})$; since then we have $\Phi^1(w) \subseteq \bigcup_{\beta \in \Phi^R(w)} \Phi(j_{\beta})$ and thus by \Cref{inversion_set_is_determined_by_phi1} and \Cref{inversion_set_of_join} it would then follow that 
    \begin{equation*}
        \Phi(w) = cone_{\Phi}(\Phi^1(w)) \subseteq cone_{\Phi} \big(\bigcup_{\beta \in \Phi^R(w)} \Phi(j_{\beta}) \big) = \Phi(v)
    \end{equation*}
    and so $w \preceq v$.
    
    Hence let $\alpha \in \Phi^1(w) \setminus \Phi^R(w)$. By \Cref{thm:short_inversion_sandwich} there is $\beta \in \Phi^R(w)$ such that $\alpha \preceq_w \beta$ and by \Cref{prop:short_inversion_poset_determines_order} this implies that for any reduced word $\bar{w}$ of $w$ we have $\alpha \le \beta$ in the total order on $\Phi(w)$. Intuitively, this means that the wall $H_{\alpha}$ is crossed \textit{before} the wall $H_{\beta}$ for any reduced word $\bar{w}$ of $w$, when considered as a walk from the identity $e$ to $w$ in the Coxeter complex or Cayley graph of $(W,S)$.
    
    Now let $\bar{w}$ be a reduced word of $w$ and let $\bar{w}_{\beta}$ be the initial path of $\bar{w}$ with $\beta \in \Phi^R(w_{\beta})$, where $w_{\beta}$ is the element represented by the word $\bar{w}_{\beta}$. Note that $\bar{w}_{\beta}$ is a reduced word since $\bar{w}$ is reduced. By \Cref{prop:bijection_join_refine_phi1}, since $\beta \in \Phi^1(w_{\beta})$ and $w_{\beta} \preceq w$, we have  $j_{\beta} \preceq w_{\beta}$ and so we can modify $\bar{w}$ to obtain another reduced word of $w$, $\bar{\bar{w}}$ with a reduced word $\bar{j_{\beta}}$ the initial path of $\bar{\bar{w}}$ with $\beta \in \Phi^R(j_{\beta})$. 
    
    Applying the same argument to the reduced word $\bar{\bar{w}}$, by \Cref{prop:short_inversion_poset_determines_order} again, there must be an initial path of $\bar{\bar{w}}$ with reduced word $\bar{w}_{\alpha}$ with $\alpha \in \Phi^R(w_{\alpha})$ such that $w_{\alpha} \preceq j_{\beta} \preceq w$ as required.
    
    Now let $\phi^R(w) = \{ j_{\beta} \mid \beta \in \Phi^R(w) \}$. We now show that $\phi^R(w)$ is irredundant. If $|\Phi^R(w)| = 1$, then $w$ is join-irreducible and the result is clear, hence assume $|\Phi^R(w)| > 1$. Suppose  for some $\alpha \in \Phi^R(w)$ we have
    \begin{align*}
        w =& \  \bigvee \{ j_{\beta} \mid \beta \in \Phi^R(w) \setminus \{ \alpha \} \} \\
        =& \ \bigvee \{ j_{\beta} \mid \beta \in \Phi^1(w) \setminus \{ \alpha \} \} \hspace{0.5cm} \mbox{by \Cref{prop:x_is_join_of_phi1_tight_elements}}
    \end{align*}
   It then follows that $s_{\alpha} w \prec w$ and $\Phi^1(s_{\alpha}w) = \Phi^1(w) \setminus \{ \alpha \}$ and thus
    \begin{equation*}
        w \succ s_{\alpha}w = \bigvee \{ j_{\beta} \mid \beta \in \Phi^1(w) \setminus \{ \alpha \} \} \hspace{0.5cm} \mbox{again by \Cref{prop:x_is_join_of_phi1_tight_elements}}
    \end{equation*}
    a contradiction. Finally, the fact that the set $\phi^R(w)$ join-refines every join representation of $w$ follows by \Cref{join_refinements} since $\phi^R(w) \subseteq \phi^1(w)$.
\end{proof}

\newpage

\bibliographystyle{plain}
\bibliography{journal}
\end{document}